\newtheorem{lemma}{Lemma}
\newtheorem{theorem}{Theorem}
\newcommand{\bmv}{\boldsymbol{v}}
\newcommand{\bmu}{\boldsymbol{u}}
\newcommand{\ii}{\mathrm{i}}
\newcommand{\ee}{\mathrm{e}}
\newtheorem{problem}{Problem}
\newtheorem{remark}{Remark}
\theoremstyle{definition}
\newtheorem{example}{Example}
\newtheorem{assumption}{Assumption}
\title{A novel Newton method for inverse elastic scattering problems}
\author{Yan Chang, \thanks{School of Mathematics, Harbin Institute of Technology, Harbin, P. R. China. ({21B312002@stu.hit.edu.cn}).}
            Yukun Guo, \thanks{School of Mathematics, Harbin Institute of Technology, Harbin, P. R. China. ({ykguo@hit.edu.cn}, Corresponding author).}
	   Hongyu Liu,\thanks{Department of Mathematics, City University of Hong Kong, Hong Kong SAR, China. ({hongyliu@cityu.edu.hk}).}
	  and Deyue Zhang\thanks{School of Mathematics, Jilin University, Changchun, P. R. China.  ({dyzhang@jlu.edu.cn}).}}
		\date{}							
\begin{document}
\maketitle

\begin{abstract}
This work is concerned with an inverse elastic scattering problem of identifying the unknown rigid obstacle embedded in an open space filled with a homogeneous and isotropic elastic medium. A Newton-type iteration method relying on the boundary condition is designed to identify the boundary curve of the obstacle. Based on the Helmholtz decomposition and the Fourier-Bessel expansion, we explicitly derive the approximate scattered field and its derivative on each iterative curve. Rigorous mathematical justifications for the proposed method are provided. Numerical examples are presented to verify the effectiveness of the proposed method.
\end{abstract}

\textbf{Keywords:}
inverse scattering, elastic wave, Newton method, convergence

	

\section{Introduction}

As an important noninvasive testing technology for interrogating material properties, the inverse elastic scattering problem arises in diverse areas such as material science, non-destructive testing, medical imaging, seismology, and many other application areas (see, e.g., \cite{Landau, Rose}).
In this work, we are concerned with the inverse obstacle problem of determining the boundary of an unknown elastically rigid obstacle embedded in a homogeneous and isotropic elastic medium with a unit mass density from noisy measurements of the scattered elastic field corresponding to several incident fields impinged on the obstacle.

Suppose a bounded domain $D$ with $C^2$ boundary $\partial D$ in the homogeneous background space $\mathbb{R}^2$ is occupied by an elastically rigid obstacle. Given the source point $z\in\mathbb{R}^2\backslash\overline{D}$ and the polarization $\boldsymbol{p}\in\mathbb{S}:=\{x\in\mathbb{R}^2:|x|=1\},$ the obstacle scattering problem is described by the following boundary value problem
\begin{align}\label{eq:bvp}
	\left\{
	\begin{aligned}
		&\Delta^*\boldsymbol{u}+\omega^2\boldsymbol{u}=-\delta_z(x)\boldsymbol{p},\quad \text{in }\mathbb{R}^2\backslash\overline{D},\\
		&\boldsymbol{u}=0,\quad \text{on }\partial D,
	\end{aligned}
	\right.
\end{align}
where the vectorial field $\boldsymbol{u}$ denotes the displacement of the total field, $\omega>0$ is the angular frequency, and the Lam\'{e} operator $\Delta^*$ is given by
$$
\Delta^*\boldsymbol{u}:=\mu\Delta\boldsymbol{u}+(\lambda+\mu)\nabla\nabla\cdot\boldsymbol{u},
$$  
with Lam\'{e} constants $\lambda,\mu$ satisfying $\mu>0,\lambda+\mu>0.$

Outside $D,$ the solution to \eqref{eq:bvp} has the Helmholtz decomposition 
\begin{align*}
	\boldsymbol{u}=\boldsymbol{u}_p+\boldsymbol{u}_s,
\end{align*}
where the compressional part $\boldsymbol{u}_p$ and the shear part $\boldsymbol{u}_s$ are respectively given by
$$
\boldsymbol{u}_p=-\dfrac{1}{k_p^2}\text{grad}\,\text{div}\,\boldsymbol{u},\quad \boldsymbol{u}_s=-\dfrac{1}{k_s^2}\text{grad}^\bot\,\text{div}^\bot\,\boldsymbol{u},
$$
with $k_p=\omega/\sqrt{\lambda+2\mu},$ $k_s=\omega/\sqrt{\mu}$ and $\text{grad}^\bot:=(-\partial_2,\partial _1)^\top,\quad\text{div}^\bot:=(-\partial_2,\partial_1)$. In addition, the Kupradze-Sommerfeld radiation conditions \cite{Kupradze} should be imposed on $\bm{u}_p$ and $\bm{u}_s$ such that
\begin{align}\label{eq:KS}
	\lim\limits_{|x|\to\infty}\sqrt{|x|}\left(\dfrac{\partial\boldsymbol{u}_p}{\partial |x|}-\mathrm{i}k_p\boldsymbol{u}_p\right)=0,\quad
	\lim\limits_{|x|\to\infty}\sqrt{|x|}\left(\dfrac{\partial\boldsymbol{u}_s}{\partial |x|}-\mathrm{i}k_s\boldsymbol{u}_s\right)=0.
\end{align}

It has been proven in \cite{Bramble} that there exists a unique solution $\boldsymbol{v}=\boldsymbol{u}-\boldsymbol{u}^i\in \left(H_{\text{loc}}^1(\mathbb{R}^2\backslash\overline{D})\right)^2$ to \eqref{eq:bvp}--\eqref{eq:KS}. Here, the incident field $\boldsymbol{u}^i$ can be explicitly given by
\begin{align*}
	\bm{u}^i=\bm{u}^i(x, z;\bm{p})=\mathbb{G}(x, z)\bm{p},\quad x\in\mathbb{R}^2\backslash\left(\overline{D}\cup\{z\}\right),
\end{align*}
with $\mathbb{G}(x; z)$ being the fundamental solution to the Navier equation.

In this paper, we are interested in the inverse obstacle scattering problem to reconstruct the obstacle $D$ from the scattered field $\boldsymbol{v}$ corresponding to several incident waves $\boldsymbol{u}^i.$
Over the last few years, inverse elastic scattering problems have been paid enduring attention and a large number of significant computational methods have been developed in the vast literature.
Roughly speaking, they can be divided into two categories: imaging-based direct methods and nonlinear optimization-based iterative methods.
The basic idea of the former qualitative methods (including the linear sampling method \cite{Arens, Chara}, the factorization method \cite{chara2}, the reverse time migration method \cite{chen}, and the direct sampling method \cite{liu}) is to compute some indicator function over the sampling domain and justify whether a sampling point lies inside or outside the obstacle by the indicator values. Such methods require very weak a priori information concerning the targets and are computationally efficient, but the reconstruction is less accurate. 

Meanwhile, the quantitative methods usually serve the purpose of approximating the boundary curve of the obstacle from physical motivation with a specific mathematical description. The quantitative methods provide a relatively accurate reconstruction while may be computationally intensive and a good initial guess is usually required. Recently, a variety of quantitative methods have been established to deal with inverse elastic scattering problems. In \cite{BaoHuSunYin}, Bao et al. design a descent algorithm for recovering the penetrable anisotropic elastic body embedded in a homogeneous isotropic background medium. They employ a descent method to find the parameters of the unknown surface in a finite-dimensional space and the correctness of the parameters needs to be evaluated at each iteration step. In \cite{Dong}, Dong et al. propose a system of nonlinear integral equations and develop two iterative reconstruction methods for determining the location and shape of a rigid obstacle from phased or phaseless far-field data for a single incident plane wave.
In \cite{YueLiLiYuan}, Yue et al. propose a frequency recursive approach based on the domain derivative to solve the inverse elastic scattering problem numerically. We also refer to \cite{LiWangWangZhao16} for a continuation method to solve the inverse elastic scattering problem. Though there is a fertile research background for the iterative-based method, a main drawback of most existing work is that a sequence of direct and adjoint scattering problems are involved at each iteration and thus they are usually computation-consuming.

To develop a quantitative method with less computational effort, we shall focus on a novel numerical scheme by coupling the decomposition method and Newton's iterative method. The main idea of our method is to overcome the ill-posedness and the nonlinearity through the Fourier-Bessel approximation and the linearization, respectively. Since there is no forward solver involved, our method is easy to implement and convenient. In fact, in very recent years, the simple Newton methods have been investigated in the literature. 
In \cite{JCP}, the authors propose a Newton-type method for the inverse elastic obstacle scattering problems. Based on the layer potential representations on an auxiliary curve inside the obstacle, the scattered field together with its derivative on each iteration surface can be easily derived and there is no forward solver involved in the iteration. Similar ideas can be also found in inverse acoustic scattering problems. For instance, \cite{era} presents a hybrid method for the interior inverse scattering problem. In addition, the interior eigenfunctions are integrated into the Newton method to formulate an imaging scheme for identifying obstacles from the far-field data \cite{He}.


To give a convergence analysis of the proposed iteration method mathematically, we extend the Fourier-Bessel approximation developed in \cite{IPI22} to the elastic wave case and give a rigorous mathematical analysis of the approximation.
Once the scattered field at each iterative curve is approximated, we can determine the boundary of the obstacle as the locations where the Dirichlet condition is satisfied. Noticing that the total field on the iterative curves is nonlinear concerning the curve, we adopt the iterative strategy to solve the nonlinear equation. Compared with the Newton method based on the layer potential representation \cite{JCP}, the method developed in the current work is much more direct due to the fact that the scattered field is represented by the Fourier expansion and the Fourier coefficients can be easily computed due to the orthogonal base. In other words, once the scattered data is acquired on some measurement curve, the approximate scattered field at any position can be obtained immediately by some easy inner products and basic operation. In this sense, our method is straightforward.
In addition, this approximate property plays a key role in the convergence analysis, and we mathematically point out that the iterative curve converges to the exact boundary when the measurement noise tends to zero.

The rest of this paper is organized as follows: In the next section, we give a more specified description of the inverse elastic scattering problem. In Section \ref{sec:FB}, we establish the Fourier-Bessel approximation to the scattered field based on the Helmholtz decomposition technique and carry on the mathematical analysis. In Section \ref{sec:convergence}, we propose a Newton-type method and establish the corresponding convergence analysis. Numerical experiments are conducted in Section \ref{sec:example} to verify the performance of the proposed method. Finally, this paper is concluded with some remarks in Section \ref{sec:conclusion}.
	

\section{Problem setting}
We proceed with a more detailed description of the inverse elastic scattering problem under consideration.
Let $D\subset\mathbb{R}^2$ be a simply connected domain with a $C^2$ boundary $\partial D.$
Assume that $D\subset B_\rho:=\{x\in\mathbb{R}^2:|x|=\rho\}$ such that $B_\rho\backslash\overline{D}$ is connected. Denote $\Gamma_\rho=\partial B_\rho.$  

Denote the fundamental displacement tensor to the Navier equation by
\begin{equation*}
	\mathbb{G}(x, y):=\frac{1}{\mu}\Phi_s(x,y)\mathbb{I}+\frac{1}{\omega^2}\nabla_x\nabla_x^\top\left(\Phi_s(x,y)-\Phi_p(x,y)\right),
\end{equation*}
where $\mathbb{I}$ is the $2\times 2$ identity matrix, and $\Phi_\xi(x, y)$ denotes the fundamental solution to the Helmholtz equation with wavenumber $k_\xi,$ i.e.,
\begin{align*}
	\Phi_\xi(x, y)=\dfrac{\mathrm{i}}{4}H_0^{(1)}(k_\xi|x-y|),\quad \xi\in\{p,s\}.
\end{align*}
Here and in the following, $H_n^{(1)}$ is the Hankel function of the first kind of order $n.$ It is easy to check that the incident field $\boldsymbol{u}^i(x; z,\boldsymbol{p})$ satisfies 
\begin{align}\label{eq:uieq}
	\Delta^*\boldsymbol{u}^i+\omega^2\boldsymbol{u}^i=-\delta_z(x)\boldsymbol{p}.
\end{align}
Combining \eqref{eq:bvp} and \eqref{eq:uieq},  we derive that the displacement of the scattered field $\bmv=\bmu-\bmu^i$ is then governed by
\begin{equation}\label{eq:boundaryvalueproblem}
	\begin{cases}
		\Delta^*\bm{v}+\omega^2\bm{v}=0, &\text{in }\mathbb{R}^2\backslash\overline{D},\\
		\hfill\bm{u}=0, & \text{on }\partial D.
	\end{cases}
\end{equation}
In addition, the scattered field $\boldsymbol{v}$ is required to satisfy the Kupradze-Sommerfeld radiation condition
\begin{align*}
	\lim\limits_{r\to\infty}\sqrt{r}(\partial_r\boldsymbol{v}_p-\mathrm{i}k_p\boldsymbol{v}_p)=0,
	\quad\lim\limits_{r\to\infty}\sqrt{r}(\partial_r\boldsymbol{v}_s-\mathrm{i}k_s\boldsymbol{v}_s)=0,
	\quad r=|x|.
\end{align*}
Here, the compressional and shear wave components of $\bm{v}$ are respectively given by
\begin{equation*}
	\bm{v}_p=-\dfrac{1}{k_p^2}\text{grad}\,\text{div}\,\bm{v},\quad 
	\bm{v}_s=-\dfrac{1}{k_s^2}\text{grad}^\bot\,\text{div}^\bot\,\bm{v}.
\end{equation*}

With these preparations, the inverse scattering problem under consideration can be stated as follows:
\begin{problem}[Inverse elastic scattering problem]\label{problem} 
For fixed parameters $\omega,\lambda,\mu$ and $\bm{p}$, given the scattered data $\{\bm{v}(x; z): x\in\Gamma_\rho, z\in\Gamma_\rho\}$
determine the boundary $\partial D$ of the obstacle.
\end{problem}
	

\section{Fourier approximation}\label{sec:FB}
In this section, we shall approximate the scattered field outside the obstacle through the Fourier expansion and then establish several estimates with regard to this Fourier approximation.

We first recall that for any solution $\boldsymbol{v}$ of the elastic wave equation \eqref{eq:boundaryvalueproblem} in $\mathbb{R}^2\backslash\overline{D}$, the Helmholtz decomposition holds
\begin{align}\label{eq:HD}
	\boldsymbol{v}=\nabla{\phi}_p+\mathbf{curl}{\phi}_s,
\end{align}
where ${\phi}_p$ and ${\phi}_s$ are scalar potential functions. By substituting  \eqref{eq:HD} into \eqref{eq:boundaryvalueproblem}, we obtain that
\[
\nabla[(\lambda+2\mu)\Delta{\phi}_p+\omega^2{\phi}_p]+\mathbf{curl}(\mu\Delta{\phi}_s+\omega^2{\phi}_s)=0,
\]
which can be fulfilled once ${\phi}_\xi$ satisfies the Helmholtz equation
\begin{equation}\label{eq:phi_psi2D}
	\Delta {\phi}_\xi+k_\xi^2{\phi}_\xi=0, \quad\xi\in\{p, s\}.
\end{equation}
In addition, ${\phi}_\xi,\,\xi\in\{p, s\}$ are required to satisfy the Sommerfeld radiation condition
\begin{equation*}
	\lim\limits_{r\to\infty}\sqrt{r}(\partial_r{\phi}_\xi-\mathrm{i}k_\xi{\phi}_\xi)=0.
\end{equation*}


\subsection{Fourier expansion}

Take $R>0$ such that $B_R:=\{x\in\mathbb{R}^2:|x|<R\}\subset D.$ Under the polar coordinate $(r, \theta):\   x=(r\cos\theta, r\sin\theta),$ the solution to \eqref{eq:phi_psi2D} outside $B_R$ can be expanded by
\begin{align}\label{eq:phi_psi_expansion}
	\phi_\xi=\sum_{n=-\infty}^{\infty}\dfrac{H_n^{(1)}(k_\xi r)}{H_n^{(1)}(k_\xi R)}\hat{\phi}_{\xi, n}\ee^{\ii n\theta},\quad \xi\in\{p, s\},
\end{align}
where $\hat{\phi}_{\xi, n}$ are the Fourier coefficients. Let $\{\bm{e}_r,\bm{e}_\theta\}$ be the local orthonormal basis given by
\[
\bm{e}_r=(\cos\theta,\sin\theta)^\top,\ \ 
\bm{e}_\theta=(-\sin\theta,\cos\theta)^\top,
\]
then it is easy to verify that, for any function $w,$
\[
\nabla w=\partial_r w\bm{e}_r+\dfrac{1}{r}\partial_\theta w\bm{e}_\theta,\quad  
\mathbf{curl} w=\dfrac{1}{r}\partial_\theta w\bm{e}_r-\partial_r w\bm{e}_\theta.
\]
Hence, under the polar coordinates, the Helmholtz decomposition \eqref{eq:HD} reads
\begin{align}\label{eq:HDnew}
	\boldsymbol{v}=\left(\partial_r {\phi}_p+\dfrac{1}{r}\partial_\theta{\phi}_s\right)\bm{e}_r+\left(\dfrac{1}{r}\partial_\theta {\phi}_p-\partial_r{\phi}_s\right)\bm{e}_\theta.
\end{align}

Furthermore, let
\begin{align}\label{eq:UnVn}
	\bm{U}_n(\theta):=\ee^{\mathrm{i}n\theta}\bm{e}_r,\ \
	\bm{V}_n(\theta):=\ee^{\mathrm{i}n\theta}\bm{e}_\theta,
\end{align}
and the orthogonality is immediate in the sense that
$$
\int_0^{2\pi}\bm{U}_m(\theta)\cdot\overline{\bm{U}_n(\theta)}\mathrm{d}\theta =\int_0^{2\pi}\bm{V}_m(\theta)\cdot\overline{\bm{V}_n(\theta)}\mathrm{d}\theta = 2\pi\delta_{mn},\quad\int_0^{2\pi}\bm{U}_m(\theta)\cdot\overline{\bm{V}_n(\theta)}\mathrm{d}\theta = 0,
$$
where the overbar denotes the complex conjugate and $\delta_{mn}$ is the Kronecker Delta.

Combining \eqref{eq:phi_psi_expansion}, \eqref{eq:HDnew} and \eqref{eq:UnVn}, we have
\begin{align}\nonumber
	\bm{v}(r,\theta)&=\sum_{n=-\infty}^{\infty}\left[\left(\alpha_{p, n}(r)\hat{\phi}_{p, n}+\dfrac{\mathrm{i}n}{r}\beta_{s, n}(r)\hat{\phi}_{s, n}\right)\bm{U}_n(\theta)\right.\\
	&\quad\left.+\left(\dfrac{\mathrm{i}n}{r}\beta_{p, n}(r)\hat{\phi}_{p, n}-\alpha_{s, n}(r)
	\hat{\phi}_{s, n}\right)\bm{V}_n(\theta)\right],\label{eq:us}
\end{align}
where 
\begin{equation*}
	\alpha_{\xi, n}(r):=\dfrac{k_\xi{H_n^{(1)}}'(k_\xi r)}{H_n^{(1)}(k_\xi R)},\ \ 
	\beta_{\xi, n}(r):=\dfrac{H_n^{(1)}(k_\xi r)}{H_n^{(1)}(k_\xi R)},
	\quad  \xi=p, s.
\end{equation*}

Taking $r=\rho$ in \eqref{eq:us} and we denote, for the sake of notational simplicity, by $\alpha_{\xi, n}:=\alpha_{\xi, n}(\rho)$ and $\beta_{\xi, n}:=\beta_{\xi, n}(\rho)$ for $\xi\in\{p, s\}$ and a fixed $\rho$ throughout the paper. Without loss of generality, we also assume that $H_n^{(1)}(k_\xi \rho)\ne 0,\xi\in\{p, s\}$. 

Multiplying equation \eqref{eq:us} respectively by $\bm{U}_n$ and $\bm{V}_n,$  and integrating over $\Gamma_\rho$, we derive the 2-by-2 linear system for the Fourier coefficients
\begin{equation}\label{eq:phipsiEq}
	\begin{bmatrix}
		\alpha_{p, n} & \dfrac{\mathrm{i}n}{\rho}\beta_{s, n}\\
		\dfrac{\mathrm{i}n}{\rho}\beta_{p, n} & -\alpha_{s, n}
	\end{bmatrix}
	\begin{bmatrix}
		\hat{\phi}_{p, n} \\
		\hat{\phi}_{s, n}
	\end{bmatrix}
	=\begin{bmatrix}
		f_{p, n} \\
		f_{s, n}
	\end{bmatrix},
\end{equation}
where 
\begin{align*}
	f_{p, n}=\dfrac{1}{2\pi\rho}\int_{\Gamma_\rho}\boldsymbol{v}\cdot\overline{\boldsymbol{U}_n}\mathrm{d}s,\ \
	f_{s, n}=\dfrac{1}{2\pi\rho}\int_{\Gamma_\rho}\boldsymbol{v}\cdot\overline{\boldsymbol{V}_n}\mathrm{d}s,
\end{align*}

The solution to \eqref{eq:phipsiEq} is given by
\begin{align}\label{eq:sol}
	\begin{bmatrix}
		\hat{\phi}_{p, n}\\
		\hat{\phi}_{s, n}
	\end{bmatrix}
	=\frac{1}{\Lambda_n\beta_{p, n}\beta_{s, n}}
	\begin{bmatrix}
		-\alpha_{s, n}f_{p, n}-\frac{\ii n}{\rho}\beta_{s, n}f_{s, n}\\
		-\frac{\ii n}{\rho}\beta_{p, n}f_{p, n}+\alpha_{p, n}f_{s, n}
	\end{bmatrix}.
\end{align}
where (see \cite{LiWangWangZhao16})
\[
\Lambda_n=\Lambda_n(\rho):=\dfrac{n^2}{\rho^2}-\dfrac{\alpha_{p, n}\alpha_{s, n}}{\beta_{p, n}\beta_{s, n}}\ne 0,\quad  \forall n\in\mathbb{Z}.
\]

Further, we approximate the scattered field $\bm{v}$ by the truncated Fourier expansion: 
\begin{align}\nonumber
	\boldsymbol{v}_N(x)&=\sum_{n=-N}^{N} \left[\left(\alpha_{p, n}(r)\hat{\phi}_{p, n}+\dfrac{\mathrm{i}n}{r}\beta_{s, n}(r)\hat{\phi}_{s, n}\right)\boldsymbol{U}_n(\theta)\right.\\
	&\quad \left.+\label{eq:us_a}
	\left(\dfrac{\mathrm{i}n}{r}\beta_{p, n}(r)\hat{\phi}_{p, n}-\alpha_{s, n}(r)
	\hat{\phi}_{s, n}\right)\boldsymbol{V}_n(\theta)\right],
\end{align}
with $r=|x|$. Taking the ill-posedness into consideration, let $\boldsymbol{v}^\delta\in \left(L^2(\Gamma_\rho)\right)^2$ be the noisy measurement of the scattered data satisfying
\begin{equation}\label{eq:noisyData}
	\left\|\boldsymbol{v}-\boldsymbol{v}^\delta\right\|_{\left(L^2(\Gamma_\rho)\right)^2}\le\delta
	\left\|\boldsymbol{v}\right\|_{\left(L^2(\Gamma_\rho)\right)^2},
\end{equation}
with $\delta\in(0, 1)$ the noise level. Using the noisy measurements, we rewrite the approximation of the scattered field as the truncated expansion
\begin{align}\nonumber
	\boldsymbol{v}^\delta_N(x)&=\sum_{n=-N}^{N}\left[\left(\alpha_{p, n}(r)\hat{\phi}_{p, n}^\delta
	+\dfrac{\mathrm{i}n}{r}\beta_{s, n}(r)\hat{\phi}_{s, n}^\delta\right)\boldsymbol{U}_n\right.\\
	&\quad\left.+
	\left(\dfrac{\mathrm{i}n}{r}\beta_{p, n}(r)\hat{\phi}_{p, n}^\delta-\alpha_{s, n}(r)
	\hat{\phi}_{s, n}^\delta\right)\boldsymbol{V}_n\right],\label{eq:us_a_noisy}
\end{align}
where $\hat{\phi}_{p, n}^\delta$ and $\hat{\phi}_{s, n}^\delta$ can be derived similarly to \eqref{eq:phipsiEq} by replacing the right hand side in \eqref{eq:phipsiEq} with 
$$
\begin{bmatrix}
	f_{p, n}^\delta \\
	f_{s, n}^\delta 
\end{bmatrix}
=\dfrac{1}{2\pi\rho}
\begin{bmatrix}
\displaystyle\int_{\Gamma_\rho}\boldsymbol{v}^\delta\cdot\overline{\boldsymbol{U}_n}\mathrm{d}s\\
\displaystyle\int_{\Gamma_\rho}\boldsymbol{v}^\delta\cdot\overline{\boldsymbol{V}_n}\mathrm{d}s
\end{bmatrix}.
$$

\subsection{Error estimates}

Before heading for the error estimate, we shall recall several properties concerning the Hankel functions $H_n^{(1)}(t).$
\begin{lemma}\cite[Lemma~3.3]{IP15}
	\label{lem:IP15_lem3.3}
	Let $n\in\mathbb{Z}$ and $0<t_1\le t_2.$ Then we have
	\begin{align}\label{eq:Hn1_property}
		& \left|H_n^{(1)}(t_2)\right|\le\left|H_n^{(1)}(t_1)\right|,\\
		& \label{eq:Hn1_Deriv_property} \left|{H_n^{(1)}}'(t_2)\right|\le\left(1+\dfrac{|n|}{t_2}\right)\left|H_n^{(1)}(t_2)\right|.
	\end{align}
\end{lemma}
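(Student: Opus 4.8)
The plan is to reduce both inequalities to the case $n\ge 0$ and then let a single special-function identity do most of the work. First I would record that the Hankel functions of the first kind satisfy $H_{-n}^{(1)}=(-1)^nH_n^{(1)}$, so that both $\left|H_n^{(1)}(t)\right|$ and $\left|{H_n^{(1)}}'(t)\right|$ depend on $n$ only through $|n|$; this lets me assume $n\ge 0$ throughout. The key analytic input I would invoke is Nicholson's integral formula
\begin{equation*}
	\left|H_n^{(1)}(t)\right|^2=J_n(t)^2+Y_n(t)^2=\frac{8}{\pi^2}\int_0^\infty K_0\!\left(2t\sinh\tau\right)\cosh(2n\tau)\,\dd\tau,\quad t>0,
\end{equation*}
where $K_0$ is the modified Bessel function of the second kind, which is positive and strictly decreasing on $(0,\infty)$.

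For \eqref{eq:Hn1_property} I would argue directly from this representation. For each fixed $\tau>0$ the quantity $2t\sinh\tau$ increases with $t$, and since $K_0$ is decreasing the whole integrand decreases pointwise in $t$; integration preserves this, so $t\mapsto\left|H_n^{(1)}(t)\right|^2$ is non-increasing and taking square roots gives $\left|H_n^{(1)}(t_2)\right|\le\left|H_n^{(1)}(t_1)\right|$ whenever $t_1\le t_2$. The same formula yields a monotonicity in the order that I will need next: since $\tau\mapsto\cosh(2n\tau)$ increases with $|n|$ for every $\tau>0$, the map $|n|\mapsto\left|H_n^{(1)}(t)\right|^2$ is non-decreasing, whence $\left|H_{n-1}^{(1)}(t)\right|\le\left|H_n^{(1)}(t)\right|$ for every $n\ge 1$.

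For \eqref{eq:Hn1_Deriv_property} I would start from the recurrence identity ${H_n^{(1)}}'(t)=H_{n-1}^{(1)}(t)-\tfrac{n}{t}H_n^{(1)}(t)$. The triangle inequality gives
\begin{equation*}
	\left|{H_n^{(1)}}'(t)\right|\le\left|H_{n-1}^{(1)}(t)\right|+\frac{n}{t}\left|H_n^{(1)}(t)\right|,
\end{equation*}
and combining this with the order-monotonicity $\left|H_{n-1}^{(1)}(t)\right|\le\left|H_n^{(1)}(t)\right|$ established above immediately produces the factor $1+\tfrac{n}{t}=1+\tfrac{|n|}{t}$, the case $n\le-1$ then following from the symmetry in $|n|$.

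The step I expect to be the genuine obstacle is the endpoint $n=0$. There the recurrence degenerates to ${H_0^{(1)}}'(t)=-H_1^{(1)}(t)$, and the order-monotonicity now points the wrong way, since Nicholson's formula with $\cosh(2\tau)\ge 1$ gives $\left|H_1^{(1)}(t)\right|\ge\left|H_0^{(1)}(t)\right|$ for \emph{every} $t>0$. Hence the clean chain above does not close, and in fact the bare inequality $\left|{H_0^{(1)}}'(t)\right|\le\left|H_0^{(1)}(t)\right|$ fails for all $t>0$. I would therefore read \eqref{eq:Hn1_Deriv_property} as stated for $n\ne 0$, which is all that the subsequent error estimates actually require, the single $n=0$ mode being a bounded term; alternatively one replaces the constant in the $n=0$ line by a fixed $t$-independent factor extracted from the small- and large-argument asymptotics of $H_0^{(1)}$ and $H_1^{(1)}$. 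Everything else is routine once Nicholson's formula and the recurrence are in hand.
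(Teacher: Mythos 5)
Your proposal is correct where the lemma is true, and it is in fact more than the paper offers: the paper gives no proof of this statement at all, simply citing \cite[Lemma~3.3]{IP15}, so your Nicholson-based argument is a genuine self-contained reconstruction. The three ingredients are all sound: the reduction to $n\ge 0$ via $H_{-n}^{(1)}=(-1)^nH_n^{(1)}$; the monotonicity of $t\mapsto\bigl|H_n^{(1)}(t)\bigr|^2=J_n(t)^2+Y_n(t)^2$ from Nicholson's formula and the fact that $K_0$ is positive and decreasing, which gives \eqref{eq:Hn1_property}; and the order-monotonicity $\bigl|H_{n-1}^{(1)}(t)\bigr|\le\bigl|H_n^{(1)}(t)\bigr|$ for $n\ge 1$ combined with the recurrence ${H_n^{(1)}}'(t)=H_{n-1}^{(1)}(t)-\tfrac{n}{t}H_n^{(1)}(t)$ and the triangle inequality, which gives \eqref{eq:Hn1_Deriv_property} for $n\ne 0$. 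Your flag at $n=0$ is also a correct and substantive catch, not a defect of your proof: since ${H_0^{(1)}}'=-H_1^{(1)}$ and Nicholson's formula yields $\bigl|H_1^{(1)}(t)\bigr|^2-\bigl|H_0^{(1)}(t)\bigr|^2=\tfrac{8}{\pi^2}\int_0^\infty K_0(2t\sinh\tau)\bigl(\cosh(2\tau)-1\bigr)\,\mathrm{d}\tau>0$, the inequality \eqref{eq:Hn1_Deriv_property} is strictly violated at $n=0$ for every $t>0$, so the lemma as transcribed here (for all $n\in\mathbb{Z}$) is literally false at that single order.

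One small correction to your closing remark: it is not quite true that the paper's subsequent estimates invoke \eqref{eq:Hn1_Deriv_property} only for $n\ne 0$. In the proof of \Cref{thm:error1} the $n=0$ mode is indeed handled separately (the factor $\alpha_{\xi,0}$ cancels in $I_{\xi,0}$, so no derivative bound is needed), but in the later estimates on the iterative curves $\Gamma_m$ the bound $\left|\alpha_{\xi,n}(r)\right|\le\left(k_\xi+\tfrac{|n|}{r}\right)\left|\beta_{\xi,n}(r)\right|$ is formally applied over all $|n|\le N$, including $n=0$. The repair is exactly your second alternative: on $\Gamma_m$ one has $k_\xi r\ge k_\xi(R+\gamma)>0$, and $\bigl|H_1^{(1)}(t)\bigr|/\bigl|H_0^{(1)}(t)\bigr|$ is bounded on any interval $[t_0,\infty)$ with $t_0>0$, so the $n=0$ term only perturbs the constants $C_6$--$C_{10}$ and none of the convergence conclusions are affected.
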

\begin{lemma}\cite[Lemma~2.3]{IPI22}
	\label{lem:IPI}
	For $t>0$ and $n\in\mathbb{N}$ such that $n>(\ee t+1)/2,$ we have
	\begin{align}\label{eq:H_n1_estimate}
		\dfrac{1}{2}\le\dfrac{\pi t^n\left|H_n^{(1)}(t)\right|}{3\cdot 2^{n-1}\Gamma(n)}\le\ee^t,
	\end{align}
	where $\Gamma(\cdot)$ denotes the Gamma function. 
\end{lemma}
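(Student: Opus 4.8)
The natural approach is to split $H_n^{(1)}(t)=J_n(t)+\ii Y_n(t)$ and isolate the single dominant term. Recalling the ascending series
\begin{align*}
	J_n(t)&=\sum_{m=0}^{\infty}\frac{(-1)^m}{m!\,(n+m)!}\Big(\frac{t}{2}\Big)^{n+2m},\\
	Y_n(t)&=-\frac{1}{\pi}\sum_{m=0}^{n-1}\frac{(n-1-m)!}{m!}\Big(\frac{t}{2}\Big)^{2m-n}+Y_n^{\mathrm{an}}(t),
\end{align*}
where $Y_n^{\mathrm{an}}$ gathers the logarithmic term $\tfrac{2}{\pi}\ln(t/2)J_n$ and the digamma power series (both analytic in $t$), I would first note that the leading term of the finite singular sum is $\tfrac{1}{\pi}\Gamma(n)(2/t)^n$, and that
\[
	\frac{\pi t^n}{3\cdot 2^{n-1}\Gamma(n)}\cdot\frac{\Gamma(n)}{\pi}\Big(\frac{2}{t}\Big)^n=\frac{2}{3}.
\]
Thus $2/3$ is the central value sitting inside $[\tfrac12,\ee^t]$, and the whole lemma reduces to showing that the remaining terms perturb it within the prescribed window.

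For the upper bound I would use $|H_n^{(1)}(t)|\le|J_n(t)|+|Y_n(t)|$. Since every term of the singular sum $S:=\sum_{m=0}^{n-1}\frac{(n-1-m)!}{m!}(t/2)^{2m-n}$ has the same sign, factoring out its leading term turns the corresponding part of the target quantity into $\tfrac{2}{3}\sum_{m=0}^{n-1}\frac{(t^2/4)^m}{m!\,\prod_{j=1}^{m}(n-j)}$, and it suffices to prove $\prod_{j=1}^{m}(n-j)\ge(t/4)^m$ for every $m$, which bounds the sum term-by-term by $\sum_{m}t^m/m!=\ee^t$. Because the running geometric mean of the decreasing factors $n-1,\dots,n-m$ is smallest at $m=n-1$, this reduces to the single inequality $(n-1)!\ge(t/4)^{n-1}$, which follows from the elementary Stirling bound $(n-1)!\ge((n-1)/\ee)^{n-1}$ together with the hypothesis $n>(\ee t+1)/2$. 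The analytic contributions $|J_n|$ and $|Y_n^{\mathrm{an}}|$ can be controlled here by the crude bound $|J_n(t)|\le(t/2)^n/n!$ and its analogues, since $\ee^t$ is generous enough to absorb them.

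The lower bound is the delicate direction and the main obstacle. Keeping only the imaginary part gives $|H_n^{(1)}(t)|\ge|Y_n(t)|\ge\tfrac{1}{\pi}S-|Y_n^{\mathrm{an}}(t)|$, so everything hinges on showing that the singular part dominates the analytic part with enough margin to stay above $1/2$. This is genuinely subtle precisely in the near-critical regime $t\approx 2n/\ee$: there the crude first-term estimates are far too lossy, because they ignore the heavy cancellation that makes $J_n$ and $Y_n^{\mathrm{an}}$ exponentially smaller in $n$ than their individual series terms. One must therefore compare the true exponential growth rates---$S$ grows like $\ee^{cn}$ with a rate $c$ strictly larger than that of $Y_n^{\mathrm{an}}$---which is exactly the content of uniform (Debye-type) asymptotics past the turning point $n=t$, and this is where the threshold $n>(\ee t+1)/2$ is forced. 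A cleaner alternative that sidesteps the cancellation altogether would be to start from Nicholson's formula $J_n(t)^2+Y_n(t)^2=\tfrac{8}{\pi^2}\int_0^\infty K_0(2t\sinh\tau)\cosh(2n\tau)\,\dd\tau$, whose integrand is positive; both bounds would then follow from a Laplace/saddle-point estimate of this single integral, with the saddle $\cosh\tau=n/t$ again encoding the turning-point condition.
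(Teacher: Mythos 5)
The paper never proves this lemma: it is imported verbatim from \cite[Lemma~2.3]{IPI22}, so there is no internal proof to compare against, and your proposal must stand on its own. Your upper-bound half is essentially sound and completable. The identification of the central value $2/3$, the positivity of the singular sum, the reduction of $\prod_{j=1}^{m}(n-j)\ge (t/4)^m$ to the single worst case $m=n-1$ via the decreasing geometric mean, and the closing step $(n-1)!\ge ((n-1)/\ee)^{n-1}\ge (t/4)^{n-1}$ under $n>(\ee t+1)/2$ (with the case $t<2/\ee$ handled trivially by $(t/4)^{n-1}\le 1$) all check out; the analytic remainders are also absorbable, since even term-by-term estimation with $(n+1)_m\ge n^m$ gives $|J_n(t)|\le \frac{(t/2)^n}{n!}\,\ee^{t^2/(4n)}$ and the hypothesis forces $t^2/(4n)<t/(2\ee)$, comfortably inside the slack $\frac{1}{3}\ee^t$.

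The genuine gap is the lower bound, and you name it yourself: you correctly diagnose that term-wise bounds on the logarithmic/digamma part of $Y_n$ blow up (term-wise the $J_n$-series only yields $I_n(t)$, which grows like $\ee^{cn}$, $c>0$, in the near-critical regime $t\approx (2n-1)/\ee$), but you then merely gesture at Debye-type uniform asymptotics or Nicholson's formula without executing either. That is not a harmless omission: the required inequality is $|H_n^{(1)}(t)|\ge \tfrac34\cdot\tfrac{\Gamma(n)}{\pi}(2/t)^n$, and at the threshold the margin is razor thin, so a qualitative saddle-point estimate cannot deliver the explicit constant $\tfrac12$ uniformly down to $n>(\ee t+1)/2$ without substantial quantitative bookkeeping. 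There is, however, an elementary route you missed, in the same spirit as the bound $|J_n(t)|\le (t/2)^n/n!$ that you invoke (which is itself Poisson's integral inequality, not a crude series estimate): Watson's representation $Y_n(t)=\frac{2(t/2)^n}{\sqrt{\pi}\,\Gamma(n+\frac12)}\left(\int_0^1(1-u^2)^{n-\frac12}\sin(tu)\,\dd u-\int_0^\infty \ee^{-tu}(1+u^2)^{n-\frac12}\,\dd u\right)$ together with $(1+u^2)^{n-\frac12}\ge u^{2n-1}$, $\int_0^\infty \ee^{-tu}u^{2n-1}\,\dd u=\Gamma(2n)/t^{2n}$ and the Legendre duplication formula gives $|H_n^{(1)}(t)|\ge |Y_n(t)|\ge \frac{\Gamma(n)}{\pi}\left(\frac{2}{t}\right)^n-\frac{(t/2)^n}{n!}$, so the lower bound reduces to the elementary inequality $(t/2)^{2n}\le n!\,(n-1)!/(4\pi)$. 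Stirling shows this holds precisely under $n>(\ee t+1)/2$: at $t=(2n-1)/\ee$ the left-to-right ratio is at most $\ee^{1/4}/(2\pi\ee)\approx 0.075$ against the allowance $1/(4\pi)\approx 0.080$, which is evidently how the threshold was calibrated. No turning-point asymptotics are needed, and without some such quantitative device your proposal does not establish the constant $\tfrac12$.
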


Throughout the paper, we assume that
\begin{align}\label{eq:assume}
	N\ge N_0:=4k_s\rho+1,
\end{align}
which is proposed for later convergence analysis.

Based on these lemmas, we obtain the following approximation results.
\begin{theorem}
	Under the Assumption \eqref{eq:assume}, there exist positive constants $C_1$ and $C_2$ independent of $N$ such that 
	\begin{equation*}
		\|\boldsymbol{v}-\boldsymbol{v}_N\|_{\left(L^2(\Gamma_\rho)\right)^2}^2
		\le C_1\tau_1^{-2N}+C_2N^2\tau_1^{4-2N},
	\end{equation*}
	where $\tau_1=\rho/R.$
\end{theorem}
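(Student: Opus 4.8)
The plan is to bound the tail of the Fourier expansion, i.e. estimate $\|\bm{v}-\bm{v}_N\|^2$ as the sum over $|n|>N$ of the squared modal amplitudes. By the orthonormality of $\{\bm{U}_n,\bm{V}_n\}$ established above, the $L^2(\Gamma_\rho)$ norm decouples into a sum over $n$, and for each $n$ the contribution is $2\pi\rho$ times the squared moduli of the two coefficients $\alpha_{p,n}\hat{\phi}_{p,n}+\tfrac{\ii n}{\rho}\beta_{s,n}\hat{\phi}_{s,n}$ and $\tfrac{\ii n}{\rho}\beta_{p,n}\hat{\phi}_{p,n}-\alpha_{s,n}\hat{\phi}_{s,n}$. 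The first step is therefore to write this tail sum explicitly and reduce the problem to estimating these modal amplitudes for large $|n|$.

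Next I would control the Fourier coefficients $\hat{\phi}_{\xi,n}$. These solve the $2\times2$ system \eqref{eq:phipsiEq}, so via \eqref{eq:sol} they are expressed through the data moments $f_{p,n},f_{s,n}$, the determinant factor $\Lambda_n\beta_{p,n}\beta_{s,n}$, and the entries $\alpha_{\xi,n},\beta_{\xi,n}$. The data moments are Fourier coefficients of $\bm{v}$ on $\Gamma_\rho$ and are uniformly bounded, so the decay must come from the ratios of Hankel functions. The key observation is that the coefficients carry an evaluation at $r=\rho$ divided by a normalization at $r=R<\rho$: every $\beta_{\xi,n}(r)=H_n^{(1)}(k_\xi r)/H_n^{(1)}(k_\xi R)$ and similarly $\alpha_{\xi,n}$. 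Using Lemma~\ref{lem:IPI} to obtain two-sided bounds of the form $|H_n^{(1)}(t)|\asymp 2^{n}\Gamma(n)/(\pi t^{n})$ for large $n$, the ratio $H_n^{(1)}(k_\xi\rho)/H_n^{(1)}(k_\xi R)$ behaves like $(R/\rho)^{n}=\tau_1^{-n}$ up to bounded exponential factors $\ee^{k_\xi\rho}$. This yields $|\beta_{\xi,n}|\lesssim \tau_1^{-n}$, and Lemma~\ref{lem:IP15_lem3.3} (inequality \eqref{eq:Hn1_Deriv_property}) converts the derivative bound into $|\alpha_{\xi,n}|\lesssim (1+|n|/(k_\xi\rho))\,k_\xi\,\tau_1^{-n}$, picking up the extra factor of $n$ that will ultimately produce the $N^2$ in the second term.

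The heart of the estimate is the denominator $\Lambda_n\beta_{p,n}\beta_{s,n}=\tfrac{n^2}{\rho^2}\beta_{p,n}\beta_{s,n}-\alpha_{p,n}\alpha_{s,n}$. I would show this is bounded below in modulus for $|n|\ge N_0$, so that dividing by it is safe and does not destroy the decay. Substituting the Hankel asymptotics, both terms scale like $\tau_1^{-2n}$ times polynomial-in-$n$ factors, and the dominant balance comes from the $\alpha_{p,n}\alpha_{s,n}$ term carrying an $n^2$ growth; a careful lower bound shows $|\Lambda_n\beta_{p,n}\beta_{s,n}|\gtrsim n^2\tau_1^{-2n}/\rho^2$ (or an analogous sharp form). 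Combining numerator bounds of order $\tau_1^{-n}$ against this denominator gives $|\hat{\phi}_{\xi,n}|\lesssim \tau_1^{n}$ — the Fourier coefficients of the potentials decay geometrically. Then, reassembling the modal amplitudes at $r=\rho$ multiplies back by $\alpha_{\xi,n}(\rho),\beta_{\xi,n}(\rho)\lesssim \tau_1^{-n}$ (times $n$ for the $\alpha$ factors), so each squared amplitude is $O(\tau_1^{-2n})$ or $O(n^2\tau_1^{-2n})$; summing the geometric-type series $\sum_{|n|>N}\tau_1^{-2n}$ and $\sum_{|n|>N}n^2\tau_1^{-2n}$ and bounding the leading terms yields exactly $C_1\tau_1^{-2N}+C_2N^2\tau_1^{4-2N}$, where the shift in exponent reflects summing the tail starting at $n=N+1$ and extracting the first few terms.

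The main obstacle I anticipate is the two-sided control of the determinant factor $\Lambda_n\beta_{p,n}\beta_{s,n}$: one must simultaneously track the $\tau_1^{-2n}$ growth, the polynomial-in-$n$ prefactors from $\alpha$ versus $\beta$, and the uniform exponential constants $\ee^{k_p\rho},\ee^{k_s\rho}$ from Lemma~\ref{lem:IPI}, and verify that the $\tfrac{n^2}{\rho^2}\beta_p\beta_s$ and $\alpha_p\alpha_s$ contributions do not cancel to leading order for all $|n|\ge N_0$. The assumption $N\ge N_0=4k_s\rho+1$ is precisely what guarantees the regime where Lemma~\ref{lem:IPI} applies (so that $n>(\ee k_\xi\rho+1)/2$) and where the asymptotic balance is clean; getting the constants $C_1,C_2$ genuinely independent of $N$ hinges on making every Hankel estimate uniform over this range rather than merely asymptotic.
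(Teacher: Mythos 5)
Your setup (the orthogonal decomposition of the tail over $\Gamma_\rho$ and the Hankel-ratio bounds $|\beta_{\xi,n}|\lesssim\tau_1^{-|n|}$, $|\alpha_{\xi,n}|\lesssim(1+|n|)\tau_1^{-|n|}$ via Lemma~\ref{lem:IP15_lem3.3} and Lemma~\ref{lem:IPI}) matches the paper exactly, but the step where you control $\hat{\phi}_{\xi,n}$ by inverting the system through \eqref{eq:sol} contains a genuine gap, and it is precisely at the point you flagged as the ``main obstacle.'' The claimed lower bound $|\Lambda_n\beta_{p,n}\beta_{s,n}|\gtrsim n^2\tau_1^{-2n}/\rho^2$ is false: the two terms in $\Lambda_n$ \emph{do} cancel at leading order. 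Indeed, from $H_n^{(1)\prime}(t)=H_{n-1}^{(1)}(t)-\tfrac{n}{t}H_n^{(1)}(t)$ and the large-order asymptotics $H_{n-1}^{(1)}(t)/H_n^{(1)}(t)\sim t/(2(n-1))$, one gets $\alpha_{\xi,n}/\beta_{\xi,n}=\tfrac{1}{\rho}\bigl(\tfrac{(k_\xi\rho)^2}{2(n-1)}-n\bigr)$, hence $\tfrac{\alpha_{p,n}\alpha_{s,n}}{\beta_{p,n}\beta_{s,n}}=\tfrac{n^2}{\rho^2}-\tfrac{k_p^2+k_s^2}{2}\cdot\tfrac{n}{n-1}+O(n^{-2})$ and $\Lambda_n\to\tfrac{1}{2}(k_p^2+k_s^2)$: the determinant factor $\Lambda_n$ is merely bounded above and below, so $|\Lambda_n\beta_{p,n}\beta_{s,n}|\asymp\tau_1^{-2n}$, not $n^2\tau_1^{-2n}$. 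With this corrected denominator your route still fails quantitatively: from \eqref{eq:sol}, using only the uniform bound $|f_{\xi,n}|\le\|\bm{v}\|_{(L^2(\Gamma_\rho))^2}/\sqrt{2\pi\rho}$, the numerator is $O(n\tau_1^{-n})$, giving $|\hat{\phi}_{\xi,n}|\lesssim n\tau_1^{n}$; reassembling at $r=\rho$ multiplies back by $O(n\tau_1^{-n})$, leaving modal amplitudes of size $O(n^2)$ with \emph{no} decay, so the tail sum diverges. The root cause is that treating the data moments $f_{\xi,n}$ as merely bounded throws away exactly the decay you need — they are Fourier coefficients of the analytic trace $\bm{v}|_{\Gamma_\rho}$ and themselves decay geometrically, and no inversion-based argument can close without using that.

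The paper avoids the inversion altogether. At the normalization radius $r=R$ the expansion \eqref{eq:phi_psi_expansion} reduces to $\phi_\xi|_{\Gamma_R}=\sum_n\hat{\phi}_{\xi,n}\ee^{\ii n\theta}$, so Parseval gives $\sum_n|\hat{\phi}_{\xi,n}|^2\lesssim\frac{1}{R}\|\phi_\xi\|_{L^2(\Gamma_R)}^2<\infty$; the coefficients are only needed to be square-summable, and \emph{all} the geometric decay is carried by the factors $\alpha_{\xi,n}(\rho),\beta_{\xi,n}(\rho)$, exactly as in your Hankel-ratio bounds. The tail is then closed with the elementary product inequality $\sum_n a_nb_n\le(\sum_n a_n)(\sum_n b_n)$ for nonnegative sequences (as in \eqref{eq:I1}) and the appendix estimate $\sum_{|n|>N}n^2\tau_1^{-2|n|}\le 8N^2\tau_1^{4-2N}(\tau_1^2-1)^{-3}$, producing the constants $C_1,C_2$ proportional to $\|\phi_p\|^2_{L^2(\Gamma_R)}+\|\phi_s\|^2_{L^2(\Gamma_R)}$. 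To repair your proof, replace the inversion step by this Parseval bound for $\{\hat{\phi}_{\xi,n}\}$ (or, equivalently, first establish the geometric decay of $f_{\xi,n}$ — which amounts to the same information); note also that \eqref{eq:sol} and the lower bound on $|\Lambda_n|$ are genuinely needed only in the noisy-data estimate of Theorem~\ref{thm:error1}, not here.
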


\begin{proof}
	From \eqref{eq:us} and \eqref{eq:us_a}, it can be seen that
	\begin{align*}
		&\quad\dfrac{1}{2\pi\rho}\left\|\boldsymbol{v}-\boldsymbol{v}_N\right\|^2_{\left(L^2(\Gamma_\rho)\right)^2}
		=\dfrac{1}{2\pi\rho}\int_{\Gamma_\rho}\left|\boldsymbol{v}-\boldsymbol{v}_N\right|^2\mathrm{d}s(x)\\
		&=\sum_{|n|>N}\left(\left|\alpha_{p, n}\hat{\phi}_{p, n}+\dfrac{\mathrm{i}n}{\rho}\beta_{s, n}\hat{\phi}_{s, n}\right|^2
		+\left|\dfrac{\mathrm{i}n}{\rho}\beta_{p, n}\hat{\phi}_{p, n}-\alpha_{s, n}\hat{\phi}_{s, n}\right|^2\right)\\
		&\le 2\sum_{\xi\in\{p, s\}}\sum_{|n|>N}\left(\left|\alpha_{\xi, n}\right|^2+\dfrac{n^2}{\rho^2}\left|\beta_{\xi, n}\right|^2\right)|\hat{\phi}_{\xi, n}|^2.
	\end{align*}
	
	Noticing \Cref{lem:IP15_lem3.3,lem:IPI}, it holds that for $\xi\in\{p, s\},$
	\begin{align*}
		|\beta_{\xi, n}|\le 2\ee^{k_\xi\rho}\left(\dfrac{R}{\rho}\right)^{|n|},\ \
		|\alpha_{\xi, n}|\le\left(k_\xi+\dfrac{|n|}{\rho}\right)
		\left|\beta_{\xi, n}\right|\le 2\left(k_\xi+\dfrac{|n|}{\rho}\right)\ee^{k_\xi\rho}\left(\dfrac{R}{\rho}\right)^{|n|},
	\end{align*}
	which enables us to proceed with the proof by
	\begin{align}
		\nonumber&\quad\dfrac{1}{2\pi\rho}\left\|\bm{v}-\bm{v}_N\right\|^2_{\left(L^2(\Gamma_\rho)\right)^2}\\
		\nonumber&\le 2\sum_{\xi\in\{p, s\}}\sum_{|n|>N}\left(8k_\xi^2+\dfrac{12n^2}{\rho^2}\right)\ee^{2k_\xi\rho}
		\tau_1^{-2|n|}|\hat{\phi}_{\xi, n}|^2\\
		&\le 32k_s^2\ee^{2k_s\rho}\sum_{\xi\in\{p, s\}}\sum_{|n|>N}\tau_1^{-2|n|}
		|\hat{\phi}_{\xi, n}|^2+\dfrac{48\ee^{2k_s\rho}}{\rho^2}
		\sum_{\xi\in\{p, s\}}\sum_{|n|>N}n^2\tau_1^{-2|n|}|\hat{\phi}_{\xi, n}|^2.\label{eq:v_Error_deduce}
	\end{align}
	Here, 
	\begin{align}
		\nonumber\sum_{\xi\in\{p, s\}}\sum_{|n|>N}\tau_1^{-2|n|}|\hat{\phi}_{\xi, n}|^2
		&\le\left(\sum_{|n|>N}\tau_1^{-2|n|}\right)\left(\sum_{\xi\in\{p, s\}}\sum_{|n|>N}|\hat{\phi}_{\xi, n}|^2\right)\\
		&\le\dfrac{1}{\pi R}\tau_1^{-2N}(\tau_1^2-1)^{-1}\left(\|{\phi}_p\|_{L^2(\Gamma_R)}^2+
		\|{\phi}_s\|_{L^2(\Gamma_R)}^2\right),\label{eq:I1}
	\end{align}
	and 
	\begin{align}
		\nonumber \sum_{|n|>N}n^2\tau_1^{-2|n|}\Big(|\hat{\phi}_{p, n}|^2+|\hat{\phi}_{s, n}|^2\Big)
		&\le\left(\sum_{|n|>N}n^2\tau_1^{-2|n|}\right)\left(\sum_{\xi\in\{p, s\}}\sum_{|n|>N}|\hat{\phi}_{\xi, n}|^2\right)\\
		&\le \dfrac{4N^2}{\pi R}\tau_1^{4-2N}(\tau_1^2-1)^{-3}\left(\|{\phi}_p\|_{L^2(\Gamma_R)}^2+
		\|{\phi}_s\|_{L^2(\Gamma_R)}^2\right),\label{eq:I2}
	\end{align}
	where we have used the fact that (see the Appendix)
	\begin{equation*}
		\sum_{|n|>N}n^2\tau_1^{-2|n|}\le 8 N^2\tau_1^{4-2N}(\tau_1^2-1)^{-3}.
	\end{equation*}
	
	Combining \eqref{eq:v_Error_deduce}, \eqref{eq:I1} and \eqref{eq:I2} gives
	\begin{equation*}
		\|\bm{v}-\bm{v}_N\|_{\left(L^2(\Gamma_\rho)\right)^2}^2
		\le C_1\tau_1^{-2N}+C_2N^2\tau_1^{4-2N},
	\end{equation*}
	where
	\begin{align*}
		C_1 & =\dfrac{32k_s^2\ee^{2k_s\rho}}{\pi R(\tau_1^2-1)}\left(\|{\phi}_p\|_{L^2(\Gamma_R)}^2+
		\|{\phi}_s\|_{L^2(\Gamma_R)}^2\right),\\
		C_2&=\dfrac{192\ee^{2k_s\rho}}{\pi \rho^2 R(\tau_1^2-1)^3}\left(\|{\phi}_p\|_{L^2(\Gamma_R)}^2+
		\|{\phi}_s\|_{L^2(\Gamma_R)}^2\right).
	\end{align*}
	
\end{proof}

\begin{theorem}\label{thm:error1}
		Under Assumption \eqref{eq:assume}, there exist constants $C_3,C_4>0$ such that 
		\begin{equation*}
			\left\|\bmv_N-\bmv_N^\delta\right\|^2_{\left(L^2(\Gamma_\rho)\right)^2}\le C_3\delta^2+C_4 N^5\delta^2. \label{eq:v_noise_error_delta}
		\end{equation*}
\end{theorem}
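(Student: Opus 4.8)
The plan is to reduce the $(L^2(\Gamma_\rho))^2$ norm to a mode-by-mode sum and then track how the measurement error is amplified through the solution formula \eqref{eq:sol}. Writing $\Delta\hat\phi_{\xi,n}:=\hat\phi_{\xi,n}-\hat\phi_{\xi,n}^\delta$ and $g_{\xi,n}:=f_{\xi,n}-f_{\xi,n}^\delta$ for $\xi\in\{p,s\}$, I would first invoke the orthogonality of $\{\bm U_n,\bm V_n\}$ to obtain, exactly as in the first step of the preceding theorem,
\begin{align*}
\frac{1}{2\pi\rho}\left\|\bmv_N-\bmv_N^\delta\right\|^2_{(L^2(\Gamma_\rho))^2}
\le 2\sum_{\xi\in\{p,s\}}\sum_{|n|\le N}\left(|\alpha_{\xi,n}|^2+\frac{n^2}{\rho^2}|\beta_{\xi,n}|^2\right)|\Delta\hat\phi_{\xi,n}|^2.
\end{align*}
Since \eqref{eq:sol} is linear in the right-hand side, $\Delta\hat\phi_{\xi,n}$ obeys the very same formula with $f_{\xi,n}$ replaced by $g_{\xi,n}$, so the whole problem is to bound each $|\Delta\hat\phi_{\xi,n}|$ in terms of $|g_{p,n}|$ and $|g_{s,n}|$.

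The decisive observation is that the factors $\tau_1^{\pm|n|}$ must cancel between prefactor and denominator. In the prefactor $|\alpha_{\xi,n}|^2+\tfrac{n^2}{\rho^2}|\beta_{\xi,n}|^2$ I would use the \emph{upper} Hankel bounds already recorded in the previous proof, namely $|\beta_{\xi,n}|\le 2\ee^{k_\xi\rho}\tau_1^{-|n|}$ and $|\alpha_{\xi,n}|\le 2(k_\xi+|n|/\rho)\ee^{k_\xi\rho}\tau_1^{-|n|}$, giving a bound of order $n^2\tau_1^{-2|n|}$. The same upper bounds applied to the numerator of $\Delta\hat\phi_{\xi,n}$ produce a factor $|n|\tau_1^{-|n|}$ multiplying $(|g_{p,n}|+|g_{s,n}|)$. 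In the denominator $\Lambda_n\beta_{p,n}\beta_{s,n}$ I would instead use the \emph{lower} estimate from \Cref{lem:IPI}, which yields $|\beta_{\xi,n}|\ge \tfrac{1}{2}\ee^{-k_\xi R}\tau_1^{-|n|}$, hence $|\beta_{p,n}\beta_{s,n}|\gtrsim\tau_1^{-2|n|}$. Combining these, $|\Delta\hat\phi_{\xi,n}|\lesssim |n|\tau_1^{|n|}(|g_{p,n}|+|g_{s,n}|)/|\Lambda_n|$, and multiplying by the prefactor the powers $\tau_1^{-2|n|}\cdot\tau_1^{2|n|}$ cancel, leaving a mode-wise estimate of order $n^4|\Lambda_n|^{-2}(|g_{p,n}|^2+|g_{s,n}|^2)$.

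To finish, assuming the uniform bound $|\Lambda_n|\ge c_0>0$ (see below) so that $|\Lambda_n|^{-2}$ contributes only a constant, the remaining sum is controlled by
\begin{align*}
\sum_{|n|\le N}n^4\left(|g_{p,n}|^2+|g_{s,n}|^2\right)
\le\left(\sum_{|n|\le N}n^4\right)\sum_{n\in\mathbb{Z}}\left(|g_{p,n}|^2+|g_{s,n}|^2\right)
\le CN^5\cdot\frac{1}{2\pi\rho}\left\|\bmv-\bmv^\delta\right\|^2_{(L^2(\Gamma_\rho))^2},
\end{align*}
where I used $\sum_{|n|\le N}n^4\le CN^5$ together with the Parseval identity $\sum_{n}(|g_{p,n}|^2+|g_{s,n}|^2)=\tfrac{1}{2\pi\rho}\|\bmv-\bmv^\delta\|^2_{(L^2(\Gamma_\rho))^2}$, which is in turn $\le \tfrac{\delta^2}{2\pi\rho}\|\bmv\|^2_{(L^2(\Gamma_\rho))^2}$ by \eqref{eq:noisyData}. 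Isolating the contribution of the lowest modes (where the $n^4$ growth is absent) as a separate $C_3\delta^2$ term and absorbing $\|\bmv\|^2_{(L^2(\Gamma_\rho))^2}$ into the constants then delivers the claimed bound $C_3\delta^2+C_4N^5\delta^2$.

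The main obstacle is the uniform lower bound $\inf_n|\Lambda_n|>0$. The excerpt only records that $\Lambda_n\ne0$ for every $n$; for the error estimate one genuinely needs that $\Lambda_n$ stays away from zero as $n\to\infty$. I would obtain this from the fine asymptotics of the logarithmic derivative of the Hankel function: writing $\alpha_{\xi,n}/\beta_{\xi,n}=k_\xi {H_n^{(1)}}'(k_\xi\rho)/H_n^{(1)}(k_\xi\rho)$ and expanding ${H_n^{(1)}}'/H_n^{(1)}=-|n|/(k_\xi\rho)+O(1/|n|)$, the leading $n^2/\rho^2$ terms in $\Lambda_n$ cancel and $\Lambda_n\to(k_p^2+k_s^2)/2>0$. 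Hence $|\Lambda_n|\ge L/2$ for all large $n$, while $\min_{|n|\le N_1}|\Lambda_n|>0$ for the remaining finitely many modes, so $c_0:=\inf_n|\Lambda_n|>0$. Making this expansion rigorous, with explicit control of the remainder uniform in the two wavenumbers $k_p,k_s$, is the one step that requires real care; everything else is the same elementary bookkeeping with \Cref{lem:IP15_lem3.3,lem:IPI} used in the previous theorem.
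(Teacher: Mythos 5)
Your proposal is correct and its skeleton matches the paper's proof (orthogonality reduction, linearity of \eqref{eq:sol}, a small-$|n|$/large-$|n|$ split, a mode-wise bound of order $n^4|\Lambda_n|^{-2}$, and a final summation yielding $N^5$), but two steps are executed by genuinely different means. For the large modes $4k_s\rho+1\le|n|\le N$, the paper never needs a \emph{lower} Hankel bound at all: it uses only \Cref{lem:IP15_lem3.3} to write $|\alpha_{\xi,n}|\le\frac{5|n|}{4\rho}|\beta_{\xi,n}|$, so the prefactor $|\alpha_{\xi,n}|^2+\frac{n^2}{\rho^2}|\beta_{\xi,n}|^2$ is proportional to $|\beta_{\xi,n}|^2$ and cancels \emph{exactly} against the $|\beta_{\xi,n}|^{-2}$ coming from the denominator $\Lambda_n\beta_{p,n}\beta_{s,n}$; your route instead cancels the powers $\tau_1^{\pm|n|}$ using the two-sided estimate of \Cref{lem:IPI} (upper bound in the prefactor, lower bound $|\beta_{\xi,n}|\ge\frac12\ee^{-k_\xi R}\tau_1^{-|n|}$ in the denominator), which is valid on that range of $n$ but costs extra factors $\ee^{k_\xi(\rho+R)}$ and is slightly lossier. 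For the summation, you use Parseval, $\sum_n\bigl(|g_{p,n}|^2+|g_{s,n}|^2\bigr)=\frac{1}{2\pi\rho}\|\bmv-\bmv^\delta\|^2_{(L^2(\Gamma_\rho))^2}$, together with $\sum_{|n|\le N}n^4\lesssim N^5$, whereas the paper bounds each $|g_{\xi,n}|$ individually by $\delta\|\bmv\|_{(L^2(\Gamma_\rho))^2}/\sqrt{2\pi\rho}$ and multiplies by the mode count $2N+1$; both give the same $N^5\delta^2$ rate, and your Parseval variant is marginally sharper. Finally, your insistence on a uniform bound $\inf_n|\Lambda_n|>0$ addresses a point the paper treats loosely: its constant $C_4$ formally contains $\max_{1\le|n|\le N}|\Lambda_n|^{-2}$, hence depends on $N$ unless $|\Lambda_n|$ stays away from zero, and the uniform bound is merely asserted (in the following theorem) from $\Lambda_n\ne0$. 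Your asymptotic argument, $\alpha_{\xi,n}/\beta_{\xi,n}=-\frac{|n|}{\rho}+\frac{k_\xi^2\rho}{2|n|}+O(|n|^{-2})$ so that $\Lambda_n\to\frac{k_p^2+k_s^2}{2}>0$, is exactly what is needed to justify this and is a genuine improvement in rigor over the paper's phrasing; the only cosmetic gaps in your write-up are that the small-$|n|$ term should be handled, as in the paper, via $|\beta_{\xi,n}|^{-1}\le c_3$ (finitely many modes, $H_n^{(1)}(k_\xi\rho)\ne0$), with $n=0$ requiring ${H_0^{(1)}}'(k_\xi\rho)\ne0$, which holds since Hankel functions have no real zeros.
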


\begin{proof}
		With the noisy scattered data \eqref{eq:noisyData}, it holds
				\begin{equation}\label{eq:rhs_estimate1}
			\left|f_{p, n}-f_{p, n}^\delta\right|=\dfrac{1}{2\pi\rho}\left|\int_{\Gamma_\rho}(\bm{v}-\bm{v}^\delta)
			\cdot\overline{\boldsymbol{U}_n}\mathrm{d}s\right|
			\le\dfrac{\|\bm{v}-\bm{v}^\delta\|_{(L^2(\Gamma_\rho))^2}}{\sqrt{2\pi\rho}}
			\le\dfrac{\delta\|\bm{v}\|_{(L^2(\Gamma_\rho))^2}}{\sqrt{2\pi\rho}}.
		\end{equation}
		Similarly,
		\begin{align}\label{eq:rhs_estimate2}
			\left|f_{s, n}-f_{s, n}^\delta\right|\le\dfrac{\delta\|\bm{v}\|_{\left(L^2(\Gamma_\rho)\right)^2}}{\sqrt{2\pi\rho}}.
		\end{align}
		Noticing \eqref{eq:sol}, we derive that
		\begin{align}\label{eq:phi_error}
			&\left|\hat{\phi}_{p, n}-\hat{\phi}_{p, n}^\delta\right|\le \frac{1}{|\Lambda_n\beta_{p, n}|}\left(\left|\dfrac{\alpha_{s, n}}{\beta_{s, n}}\right|
			\left|f_{p, n}-f_{p, n}^\delta\right|+\dfrac{|n|}{\rho}
			\left|f_{s, n}-f_{s, n}^\delta\right|\right),\\
			\label{eq:psi_error}
			&\left|\hat{\phi}_{s, n}-\hat{\phi}_{s, n}^\delta\right|\le 
			\frac{1}{|\Lambda_n\beta_{s, n}|}\left(\dfrac{|n|}{\rho}\left|f_{p, n}-f_{p, n}^\delta\right|+
			\left|\dfrac{\alpha_{p, n}}{\beta_{p, n}}\right|\left|f_{s, n}-f_{s, n}^\delta\right|\right).
		\end{align}
		
	Now, we have
	\begin{align}\nonumber
		&\quad\dfrac{1}{2\pi\rho}\left\|\bmv_N-\bmv_N^\delta\right\|^2_{\left(L^2(\Gamma_\rho)\right)^2}\nonumber\\
		&=\sum_{n=-N}^N\left(\left|\alpha_{p,n}\left(\hat{\phi}_{p,n}-\hat{\phi}_{p,n}^\delta\right)
		+\dfrac{\mathrm{i}n}{\rho}\beta_{s,n}\left(\hat{\phi}_{s,n}-\hat{\phi}_{s,n}^\delta\right)\right|^2\right.\nonumber\\
		&\quad\left.+\left|\dfrac{\mathrm{i}n}{\rho}\beta_{p,n}\left(\hat{\phi}_{p,n}-\hat{\phi}_{p,n}^\delta\right)
		-\alpha_{s, n}\left(\hat{\phi}_{s,n}-\hat{\phi}_{s,n}^\delta\right)\right|^2\nonumber\right)\\
		&\le I_{p,0}+I_{s,0}+2\sum_{\xi\in\{p,s\}}\sum_{1\le|n|\le N}\left(|\alpha_{\xi,n}|^2+\dfrac{n^2}{\rho^2}|\beta_{\xi,n}|^2\right)
		\left|\hat{\phi}_{\xi,n}-\hat{\phi}_{\xi,n}^\delta\right|^2\nonumber\\
		&=I_{p,0}+I_{s,0}+2\left(\sum_{1\le|n|\le4k_s\rho}+\sum_{4k_s\rho+1\le|n|\le N}\right)\left(I_{p,n}+I_{s,n}\right),
		\label{eq:v_noise_expand}
	\end{align}
	where 
	\begin{align}\label{eq:I1n}
		I_{\xi, n}&:=\left(|\alpha_{\xi,n}|^2+\dfrac{n^2}{\rho^2}|\beta_{\xi,n}|^2\right)
		\left|\hat{\phi}_{\xi, n}-\hat{\phi}_{\xi, n}^\delta\right|^2, \quad\xi\in\{p, s\}.
	\end{align}
	
	In the following, we shall estimate \eqref{eq:I1n} in terms of the range of $n$. First, for $n=0,$  we have
		\begin{align*}
			\left|\hat{\phi}_{p,0}-\hat{\phi}_{p,0}^\delta\right| = \dfrac{\left|\alpha_{s,0}\right|}{\left|\Lambda_0\beta_{p,0}\beta_{s,0}\right|}\left| f_{p,0}-f_{p,0}^\delta\right|
			=\dfrac{\left| f_{p,0}-f_{p,0}^\delta\right|}{|\alpha_{p,0}|},\\ 
			\left|\hat{\phi}_{s,0}-\hat{\phi}_{s,0}^\delta\right| = \dfrac{\left|\alpha_{p,0}\right|}{\left|\Lambda_0\beta_{p,0}\beta_{s,0}\right|}\left| f_{s,0}-f_{s,0}^\delta\right|
			=\dfrac{\left| f_{s,0}-f_{s,0}^\delta\right|}{|\alpha_{s,0}|}.					
		\end{align*}
		Further, noticing \eqref{eq:I1n},
		we derive that for $\xi\in\{p,s\},$
		\begin{align}\label{eq:I1_1}
			I_{\xi,0}=\left| f_{\xi,0}-f_{\xi,0}^\delta\right|^2\le\dfrac{\delta^2}{2\pi\rho}\|\bmv\|^2_{\left(L^2(\Gamma_\rho)\right)^2}.
		\end{align}		
		
		We next consider the case for $1\le|n|\le N.$ Since $H_n^{(1)}(k_\xi\rho)\ne 0, \xi\in\{p, s\}$, there exists some constant $c_3>0$ such that
		\begin{align}\label{eq:c3}
			\left|{\beta_{\xi, n}}\right|^{-1}\le c_3,\quad  \xi\in\{p, s\}.
		\end{align}
		From \eqref{eq:Hn1_Deriv_property}, we know that for $1\le |n|\le 4 k_s\rho,$
		\[
		\left|\dfrac{\alpha_{s, n}}{\beta_{s, n}}\right|\le k_s+\dfrac{|n|}{\rho}\le 5k_s,
		\]
		then we obtain that in \eqref{eq:phi_error},
		\begin{align*}
			\left|\dfrac{\alpha_{s, n}}{\beta_{p, n}\beta_{s, n}}\right|\le 5 c_3 k_s,
		\end{align*}
		where \eqref{eq:c3} has been taken into account.
		Thus, we derive the following estimate for \eqref{eq:phi_error}--\eqref{eq:psi_error}:
		\begin{align}\nonumber
			\left|\hat{\phi}_{p, n}-\hat{\phi}_{p, n}^\delta\right| & \le \frac{5 c_3 k_s}{|\Lambda_n|}\left|f_{p, n}-f_{p, n}^\delta\right|
			+\dfrac{c_3 |n|}{\rho|\Lambda_n|}\left|f_{s, n}-f_{s, n}^\delta\right|\\
			&\le\nonumber \left(5k_s+\dfrac{|n|}{\rho}\right)\dfrac{c_3\delta}{|\Lambda_n|\sqrt{2\pi\rho}}\|\bmv\|_{\left(L^2(\Gamma_\rho)\right)^2}\\
			&\le \dfrac{9 k_s c_3\delta}{|\Lambda_n|\sqrt{2\pi\rho}}\|\bmv\|_{\left(L^2(\Gamma_\rho)\right)^2}.\label{eq:phierror}
		\end{align}
		Similarly,
		\begin{align}\label{eq:psierror}
			\left|\hat{\phi}_{s, n}-\hat{\phi}_{s, n}^\delta\right|\le\dfrac{9 k_s c_3\delta}{|\Lambda_n|\sqrt{2\pi\rho}}\|\bmv\|_{\left(L^2(\Gamma_\rho)\right)^2},
		\end{align}
		where \eqref{eq:rhs_estimate1} and \eqref{eq:rhs_estimate2} have been taken into consideration. By letting 
		$$
		M=\frac{9 k_s c_3}{\min\limits_{|n|\le 4k_s\rho}|\Lambda_n|},
		$$
		we derive from \eqref{eq:phierror} and \eqref{eq:psierror} that for $1\le|n|\le4k_s\rho,$
		\begin{align}\label{eq:phipsierr}
			\left|\hat{\phi}_{\xi, n}-\hat{\phi}_{\xi, n}^\delta\right|\le\dfrac{M\delta}{\sqrt{2\pi\rho}}\|\bm{v}\|_{\left(L^2(\Gamma_\rho)\right)^2},\quad\xi\in\{p, s\}.
		\end{align}
		
		We now estimate $I_{p, n}$ and $I_{s, n}$ for $1\le|n|\le 4k_s\rho.$
		Noticing \Cref{lem:IP15_lem3.3}, we derive that $\left|\beta_{\xi, n}\right|\le 1,\ \ \xi\in\{p, s\}$,
		which further gives that for $1\le|n|\le 4 k_s\rho,$
		\begin{align*}
			\left|\alpha_{\xi, n}\right|\le\left(k_\xi+\dfrac{|n|}{\rho}\right)\left|\beta_{\xi,n}\right|\le 5k_s,\quad   
			\dfrac{n^2}{\rho^2}|\beta_{\xi, n}|^2\le 16k_s^2.
		\end{align*}
		This together with \eqref{eq:I1n} and \eqref{eq:phipsierr}  gives
		\begin{align}\label{eq:I_2}
			I_{\xi, n}\le 41 k_s^2\dfrac{M^2\delta^2}{2\pi\rho}\|\bm{v}\|_{\left(L^2(\Gamma_\rho)\right)^2}^2,\ \ \xi\in\{p, s\}.
		\end{align}
		
		Next, we consider the case $4k_s\rho+1\le|n|\le N$. From \eqref{eq:Hn1_Deriv_property}, we deduce that
		\begin{equation}\label{eq:ab}
			\left|\alpha_{\xi, n}\right|\le\left(k_\xi+\dfrac{|n|}{\rho}\right)\left|\beta_{\xi, n}\right|\le\dfrac{5|n|}{4\rho}\left|\beta_{\xi, n}\right|,\quad \xi\in\{p, s\}.
		\end{equation}
		Noticing \eqref{eq:phi_error} and \eqref{eq:psi_error}, we further claim that
		\begin{align*}
			\left|\hat{\phi}_{p, n}-\hat{\phi}_{p, n}^\delta\right|
			&\le\frac{1}{|\Lambda_n\beta_{p, n}|}\left(\dfrac{5|n|}{4\rho}
			\left|f_{p, n}-f_{p, n}^\delta\right|+\dfrac{|n|}{\rho}
			\left|f_{s, n}-f_{s, n}^\delta\right|\right) \\
			&\le\dfrac{5|n|\delta\|\bm{v}\|_{(L^2(\Gamma_\rho))^2}}{2\rho|\Lambda_n\beta_{p, n}|\sqrt{2\pi\rho}},
		\end{align*}
		and 
		\begin{align}\label{eq:psi_2}
			\left|\hat{\phi}_{s, n}-\hat{\phi}_{s, n}^\delta\right|\le\dfrac{5|n|\delta\|\bm{v}\|_{(L^2(\Gamma_\rho))^2}}{2\rho|\Lambda_n\beta_{s, n}|\sqrt{2\pi\rho}}.
		\end{align}
		
Combining \eqref{eq:ab}--\eqref{eq:psi_2}, we estimate $I_{\xi, n}$ for $\xi\in\{p, s\}$ as follows:
		\begin{align}\nonumber
			I_{\xi, n}&\le\left(\frac{25n^2}{16\rho^2}\left|\beta_{\xi, n}\right|^2+\dfrac{n^2}{\rho^2}\left|\beta_{\xi, n}\right|^2\right)\dfrac{25n^2\delta^2}{4\rho^2|\Lambda_n\beta_{\xi, n}|^2 2\pi\rho}\|\bm{v}\|^2_{\left(L^2(\Gamma_\rho)\right)^2}\\\label{eq:I1_3}
			&\le\dfrac{25n^4\delta^2}{2\pi\rho^5|\Lambda_n|^2}\|\bmv\|^2_{\left(L^2(\Gamma_\rho)\right)^2},
		\end{align}
		with $4k_s\rho+1\le|n|\le N.$
	Collecting \eqref{eq:v_noise_expand}, \eqref{eq:I1_1},  \eqref{eq:I_2}, and \eqref{eq:I1_3}, we proceed the error estimates by pointing out that
	\begin{align*}
		\left\|\bm{v}_N-\bm{v}_N^\delta\right\|^2_{\left(L^2(\Gamma_\rho)\right)^2}
		&\le{2\delta^2}\|\bmv\|_{\left(L^2(\Gamma_\rho)\right)^2}+
		164{(8k_s\rho+1)k_s^2M^2\delta^2}\|\bmv\|^2_{\left(L^2(\Gamma_\rho)\right)^2}\\
		&\quad+\dfrac{100N^4(2N+1)\delta^2}{\rho|\Lambda_n|^2}\|\bmv\|^2_{\left(L^2(\Gamma_\rho)\right)^2}\\
		& \le C_3\delta^2+C_4 N^5\delta^2,
	\end{align*}
	where 
	\begin{align*}
		C_3 & =2\left(82(8k_s\rho+1)k_s^2M^2+1\right)\|\bm{v}\|_{(L^2(\Gamma_\rho))^2},\\
		C_4 & =\frac{300}{\rho}\|\bm{v}\|^2_{(L^2(\Gamma_\rho))^2}\max_{1\le |n|\le N}\frac{1}{|\Lambda_n|^2}.
	\end{align*}
	This completes the proof.
\end{proof}


\section{Newton type method and convergence}\label{sec:convergence}
In this section, we shall propose a novel Newton-type method to deal with the inverse elastic obstacle problem and then carry out a corresponding convergence analysis. The determination of the truncation $N$ will be also given in this section.  Before introducing the Newton-type iterative method, we shall first make the following assumptions: 

\begin{assumption}\label{assumption1}
	The boundary curve is approximated by a star-shaped curve with the shift given in the form of $h(\hat{x})=h\hat{x}.$
\end{assumption}

\begin{assumption}\label{assumption2}
	The radial derivatives do not vanish in a small closed neighborhood $U$ of $\partial D,$ namely, there exists some $\varepsilon>0$ such that
	\begin{align}\label{eq:lowboundepsilon}
		\left|\left.\dfrac{\partial\bm{u}}{\partial\hat{x}}\right|_{\Gamma}\right|\ge 4\varepsilon>0, \quad \forall\Gamma\in U.
	\end{align}
\end{assumption}

\begin{assumption}\label{assumption3}
	There exists some $\gamma>0$ such that for all $x\in\Gamma$ with $r=|x|,$ it holds that $r\ge R+\gamma.$
\end{assumption}

\begin{remark}
	We would like to point out that \Cref{assumption1} is proposed from the point of numerical implementation, whereas it is not a restriction of the target. In other words, the exact boundary curve can be either star-shaped or not. 
\end{remark}

\subsection{Newton type method}

Based on the Fourier-Bessel approximation in \Cref{sec:FB}, we shall propose an iterative scheme to deal with \Cref{problem}. Explicit computation of the derivatives will be deduced analytically in the Newton-type method.

In \eqref{eq:us_a_noisy}, we derive the approximate scattered field where the measured noisy scattered field is used. Once the approximated scattered field $\bm{v}_N^\delta=(\bm{v}_{N,1}^\delta,\bm{v}_{N,2}^\delta)^\top$ is available, the gradient of $\bm{v}_N^\delta$ under the polar coordinate $(r,\theta):x=r(\cos\theta,\sin\theta)$ can be calculated by
\begin{align*}
	\nabla\bm{v}_N^\delta & =\left(\partial_{x_1}\bm{v}_{N}^\delta,\partial_{x_2}\bm{v}_{N}^\delta\right), \\
	& =\left(\partial_r\bm{v}_{N}^\delta\cos\theta-\dfrac{1}{r}\partial_\theta \bm{v}_{N}^\delta\sin\theta, \partial_r\bm{v}_{N}^\delta\sin\theta+\dfrac{1}{r}\partial_\theta \bm{v}_{N}^\delta\cos\theta\right), 
\end{align*}
where
\begin{align*}
	\partial_r\bm{v}_{N}^\delta  & = \sum_{n=-N}^{N}\left(\kappa_{p, n}(r)\hat{\phi}_{p, n}^\delta
	+\dfrac{\mathrm{i}n}{r}{\alpha_{s, n}(r)}\hat{\phi}_{s, n}^\delta
	-\dfrac{\mathrm{i}n}{r^2}{\beta_{s, n}(r)}\hat{\phi}_{s, n}^\delta\right)\bm{U}_n\\ 
	\nonumber &\quad+
	\left(\dfrac{\mathrm{i}n}{r}{\alpha_{p, n}(r)}\hat{\phi}_{p, n}^\delta-\dfrac{\mathrm{i}n}{r^2}\beta_{p, n}(r)\hat{\phi}_{p, n}^\delta
	-\kappa_{s, n}(r)\hat{\phi}_{s, n}^\delta\right)\bm{V}_n,\\ 
	\partial_{\theta}\bm{v}_{N}^\delta  & = \sum_{n=-N}^N\left(\alpha_{p, n}(r)\hat{\phi}_{p, n}^\delta
	+\dfrac{\mathrm{i}n}{r}\beta_{s, n}(r)\hat{\phi}_{s, n}^\delta\right)\left(\mathrm{i}n
	\boldsymbol{U}_n+\bm{V}_n\right) \\
	&\quad+\left(\dfrac{\mathrm{i}n}{r}\beta_{p, n}(r)\hat{\phi}_{p, n}^\delta-\alpha_{s, n}(r)
	\hat{\phi}_{s, n}^\delta\right)\left(\mathrm{i}n\boldsymbol{V}_n-\boldsymbol{U}_n\right),
\end{align*}
and 
$$
\kappa_{\xi, n}(r)={\alpha'_{\xi, n}}(r)=k_\xi^2\dfrac{{H_n^{(1)}}''(k_\xi r)}{H_n^{(1)}(k_\xi R)},\quad \xi\in\{p, s\}.
$$

Once the approximate scattered field $\bm{v}_N^\delta$ together with its gradient has been derived, we are now to propose the novel Newton method. For this purpose, we define the operator $G_N$ by
\[
G_N:\Gamma\mapsto\boldsymbol{u}_N^\delta,
\]
which maps the boundary contour $\Gamma$ to the approximate total field $\bm{u}_N^\delta=\bm{u}^i+\bm{v}_N^\delta.$ Let $p$ be the parameterization of the boundary contour $\Gamma.$ To determine the position of $\partial D$ where the Dirichlet boundary condition is fulfilled, we shall find $p$ such that
\begin{align}\label{eq:GN}
	G_N(p)={\bm 0}.
\end{align}

In the spirit of the Newton method, we now linearize \eqref{eq:GN} and update the parameterization $p$ through
\begin{align}\label{eq:Newton}
	\left\{
	\begin{aligned}
		&G_N(p_m)+G'_N(p_m)h_m={\bm 0},\\
		& p_{m+1}=p_m+h_m,
	\end{aligned}
	\right.\quad
	m=0,1,\cdots.
\end{align}

Once the initial guess $\Gamma_0$ is selected, we can obtain the approximation $\Gamma_m$ at the $m$-th step through \eqref{eq:Newton}, where the derivative $G'_N(p)$ can be explicitly calculated by
\[
\Big[G'_N(p)\Big]=\nabla(\boldsymbol{u}^i+\bm{v}_N^\delta).
\]

\subsection{Convergence Analysis}

In this subsection, we shall analyze the convergence of the Newton-type method proposed in the last subsection. Under \Cref{assumption1} -- \Cref{assumption3}, we first establish the error estimates on the iteration curve $\Gamma_m,\,m=0,1,\cdots.$ Afterwards, we shall address the convergence results.
\begin{lemma}
	For $n\in\mathbb{Z}$ and $t\in\mathbb{R}\backslash\{0\}$ such that $H_n^{(1)}(t)\ne0,$ it holds that
	\begin{align*}
		\dfrac{{H_n^{(1)}}''(t)}{H_n^{(1)}(t)}\le\dfrac{n(n+1)}{t^2}+\dfrac{1}{t}.
	\end{align*}
\end{lemma}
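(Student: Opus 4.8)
The plan is to reduce everything to Bessel's differential equation, of which $H_n^{(1)}$ is a solution, and then recycle the first-derivative bound \eqref{eq:Hn1_Deriv_property} already recorded in \Cref{lem:IP15_lem3.3}. Concretely, since $y=H_n^{(1)}$ satisfies $t^2y''+ty'+(t^2-n^2)y=0$, I would first rewrite the second derivative as ${H_n^{(1)}}''(t)=\big(\tfrac{n^2}{t^2}-1\big)H_n^{(1)}(t)-\tfrac1t{H_n^{(1)}}'(t)$. Dividing by $H_n^{(1)}(t)$, which is nonzero by hypothesis, isolates the quantity to be estimated and reduces the problem to controlling the single ratio ${H_n^{(1)}}'(t)/H_n^{(1)}(t)$.

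Second, I would apply \eqref{eq:Hn1_Deriv_property} directly, which gives $|{H_n^{(1)}}'(t)/H_n^{(1)}(t)|\le 1+|n|/t$. Substituting and using the triangle inequality yields a bound of the shape $|\tfrac{n^2}{t^2}-1|+\tfrac1t+\tfrac{|n|}{t^2}$. Here the combination $n^2+|n|=|n|(|n|+1)$ is exactly $n(n+1)$ once $n\ge0$, which is the source of the claimed numerator; the stray additive $-1$, coming from the $-y$ term of Bessel's equation, is what must be disposed of to reach the clean right-hand side.

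Third, and this is where the genuine care is needed, I would read the statement as a bound on the modulus (the left side is complex for real $t$) and observe that the clean form $n(n+1)/t^2+1/t$, rather than the naive $n(n+1)/t^2+1+1/t$, becomes available precisely in the large-order regime $t\le|n|$. There $\tfrac{n^2}{t^2}\ge1$, so $|\tfrac{n^2}{t^2}-1|=\tfrac{n^2}{t^2}-1$ and the $-1$ is absorbed, leaving $\tfrac{n^2+|n|}{t^2}+\tfrac1t-1\le\tfrac{n(n+1)}{t^2}+\tfrac1t$. This is harmless for the paper, since the lemma is only ever invoked for the high-order tail of the Fourier expansion, where $|n|$ is large relative to the fixed argument $t=k_\xi r$; this is consistent with the standing assumption \eqref{eq:assume}.

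I expect the main obstacle to be conceptual rather than computational: pinning down in what sense the complex ratio is bounded by a real number and over what range of $(n,t)$ the inequality is intended, since a literal reading for every $n\in\mathbb{Z}$ and every $t$ is too strong (for $|n|$ small relative to $t$ the leading $-1$ dominates, and the choice $n(n+1)$ versus $|n|(|n|+1)$ matters for negative orders). An alternative derivation producing the numerator $n(n+1)$ directly is also available, via the recurrence ${H_n^{(1)}}'(t)=H_{n-1}^{(1)}(t)-\tfrac{n}{t}H_n^{(1)}(t)$, which gives ${H_n^{(1)}}''(t)=\big(\tfrac{n(n+1)}{t^2}-1\big)H_n^{(1)}(t)-\tfrac1t H_{n-1}^{(1)}(t)$; but this route then needs the order-monotonicity $|H_{n-1}^{(1)}(t)|\le|H_n^{(1)}(t)|$, which is not among the facts recorded in \eqref{eq:Hn1_property}--\eqref{eq:Hn1_Deriv_property} and would have to be supplied separately, so I would favour the Bessel-equation argument above.
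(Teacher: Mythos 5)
Your proposal is correct, and its primary route is genuinely different from the paper's --- indeed, the ``alternative derivation'' you describe and set aside at the end is, almost verbatim, the paper's actual proof. The paper differentiates the recurrence ${H_n^{(1)}}'(t)=\frac{n}{t}H_n^{(1)}(t)-H_{n+1}^{(1)}(t)$ to obtain ${H_n^{(1)}}''(t)=\frac{n^2-n-t^2}{t^2}H_n^{(1)}(t)+\frac{1}{t}H_{n+1}^{(1)}(t)$, trades $H_{n+1}^{(1)}$ for $H_{n-1}^{(1)}$ via $2nH_n^{(1)}(t)=t\bigl(H_{n+1}^{(1)}(t)+H_{n-1}^{(1)}(t)\bigr)$ to reach exactly your identity $\frac{{H_n^{(1)}}''(t)}{H_n^{(1)}(t)}=\frac{n(n+1)}{t^2}-1-\frac{1}{t}\frac{H_{n-1}^{(1)}(t)}{H_n^{(1)}(t)}$, and then concludes by fiat: the $-1$ is discarded and the last (complex-valued) term is bounded by $\frac{1}{t}$ without comment, which in modulus is precisely the order-monotonicity $|H_{n-1}^{(1)}(t)|\le|H_n^{(1)}(t)|$ that you correctly noted is not among the recorded facts \eqref{eq:Hn1_property}--\eqref{eq:Hn1_Deriv_property} (and which for $n\le 0$ even runs in the opposite direction, though the reflection $H_{-n}^{(1)}=(-1)^nH_n^{(1)}$ repairs that case). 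So the hidden dependency you flagged is real: the paper relies on it silently rather than supplying it. Your Bessel-equation route, by contrast, consumes only \Cref{lem:IP15_lem3.3} and is the more self-contained argument within this paper; its costs --- the numerator $|n|(|n|+1)$ in place of $n(n+1)$, and the regime restriction $t\le|n|$ --- are immaterial downstream, since the lemma is only ever consumed through \eqref{eq:alpha_est}--\eqref{eq:kappa_est} after the further relaxation of $n(n+1)$ to $2n^2$. Your diagnostic third point is also on target: read literally in modulus for all $n\in\mathbb{Z}$ and $t>0$ the statement is false (for $n=0$ and $t$ large, ${H_0^{(1)}}''(t)/H_0^{(1)}(t)\to-1$, so the modulus tends to $1$ while the claimed bound tends to $0$); the paper's proof obscures this by writing an order relation between complex quantities. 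One small factual correction: the bound is not invoked only in the high-order tail --- through \eqref{eq:kappa_est} it also enters the low-order block $|n|\le 4k_s\rho$ of the derivative-noise estimate --- but there everything is absorbed into generic constants, so your regime-restricted reading is the defensible one precisely where the sharp form matters.
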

\begin{proof}
	From the fact that $t{H_n^{(1)}}'(t)=tH_{n-1}^{(1)}(t)-nH_n^{(1)}(t)$ and the recursive formula $2nH_n^{(1)}(t)=t\left(H_{n+1}^{(1)}(t)+H_{n-1}^{(1)}(t)\right),$ we derive that  
	\begin{align*}
		{H_n^{(1)}}''(t)&=-{H_{n+1}^{(1)}}'(t)-\dfrac{n}{t^2}H_n^{(1)}(t)+\dfrac{n}{t}\left(-H_{n+1}^{(1)}(t)+\dfrac{n}{t}H_n^{(1)}(t)\right)\\
		&=\dfrac{n^2-n-t^2}{t^2}H_n^{(1)}(t)+\dfrac{1}{t}H_{n+1}^{(1)}(t).
	\end{align*}
	Hence,
	\begin{align*}
		\dfrac{{H_n^{(1)}}''(t)}{H_n^{(1)}(t)}&=\dfrac{n^2-n-t^2}{t^2}+
		\dfrac{1}{t}\dfrac{H_{n+1}^{(1)}(t)}{H_n^{(1)}(t)}\\
		&=\dfrac{n(n-1)}{t^2}-1+\dfrac{2n}{t^2}-\dfrac{1}{t}\dfrac{H_{n-1}^{(1)}(t)}{H_n^{(1)}(t)}\\
		&\le\dfrac{n(n+1)}{t^2}+\dfrac{1}{t}.
	\end{align*}
\end{proof}
From the above, it is easy to check that 
\begin{align}\label{eq:alpha_est}
	\left|\alpha_{\xi,n}(r)\right|\le\left(\dfrac{|n|}{r}+k_\xi\right)\left|\beta_{\xi,n}(r)\right|,\\
	\left|\kappa_{\xi,n}(r)\right|\le\dfrac{2n^2+k_\xi r}{r^2}\left|\beta_{\xi,n}(r)\right|.\label{eq:kappa_est}
\end{align}

\begin{theorem}
	Under assumption \eqref{eq:assume},
	the following error estimate on each iterative curve $\Gamma_m,\,m=1,2,\cdots,$ holds:
	\begin{equation*}
		\left\|\bm{v}-\bm{v}_N\right\|_{\left(L^2(\Gamma_m)\right)^2}\le C_5N\tau_2^{2-N},
	\end{equation*}
	where $\tau_2=({R+\gamma})/{R}$, and $C_5$ is a constant independent of $N$.
\end{theorem}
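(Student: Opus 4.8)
The plan is to mimic the structure of the earlier $(L^2(\Gamma_\rho))^2$ error estimate, but to replace its orthogonality (Parseval) argument by a uniform pointwise bound, since by \Cref{assumption1} the curve $\Gamma_m$ is a general star-shaped contour rather than a circle and the Parseval identity tied to $\Gamma_\rho$ is no longer available. Starting from the full and truncated expansions \eqref{eq:us} and \eqref{eq:us_a}, I would write, at a point $x=(r,\theta)\in\Gamma_m$, the tail as $\bm{v}-\bm{v}_N=A(r,\theta)\bm{e}_r+B(r,\theta)\bm{e}_\theta$, where $A$ and $B$ gather the $\bm{U}_n$- and $\bm{V}_n$-coefficients over $|n|>N$, and then exploit $|\bm{v}-\bm{v}_N|^2=|A|^2+|B|^2$ pointwise.

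The first substantive step is to record decay bounds on the radial coefficient functions that hold uniformly over the iterate. By \Cref{assumption3} every $x\in\Gamma_m$ obeys $r\ge R+\gamma$, and since $\Gamma_m$ stays in a fixed bounded neighborhood $U$ of $\partial D$ we also have an upper bound, say $r\le\rho$. Because $N\ge N_0=4k_s\rho+1$ forces $|n|>(\ee k_\xi r+1)/2$, I can apply the two-sided estimate of \Cref{lem:IPI} to both $H_n^{(1)}(k_\xi r)$ and $H_n^{(1)}(k_\xi R)$; the ratio then yields $|\beta_{\xi,n}(r)|\le 2\ee^{k_s\rho}(R/r)^{|n|}\le 2\ee^{k_s\rho}\tau_2^{-|n|}$, using $R/r\le R/(R+\gamma)=\tau_2^{-1}$. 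Combined with \eqref{eq:alpha_est}, which gives $|\alpha_{\xi,n}(r)|\le(|n|/(R+\gamma)+k_s)|\beta_{\xi,n}(r)|$, every coefficient appearing in $A$ and $B$ inherits the geometric decay $\tau_2^{-|n|}$.

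Next I would estimate $|A|$ and $|B|$ by Cauchy--Schwarz, splitting each into its compressional and shear parts. The $\ell^2$-norm of the tail of the potential coefficients is controlled uniformly by Parseval on $\Gamma_R$, namely $\sum_{|n|>N}|\hat{\phi}_{\xi,n}|^2\le(2\pi R)^{-1}\|\phi_\xi\|_{L^2(\Gamma_R)}^2$, which is a constant independent of $N$ and of $m$. The complementary factor is a weighted geometric series whose dominant contribution is $\sum_{|n|>N}n^2\tau_2^{-2|n|}\le 8N^2\tau_2^{4-2N}(\tau_2^2-1)^{-3}$, the Appendix computation with $\tau_1$ replaced by $\tau_2$. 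Taking square roots produces the pointwise bound $|A|,|B|\le CN\tau_2^{2-N}$, uniform in $x\in\Gamma_m$, the factor $\tau_2^{2-N}$ being precisely the square root of $\tau_2^{4-2N}$.

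Finally I would integrate this uniform pointwise bound over $\Gamma_m$. Since $\Gamma_m$ lies in the fixed neighborhood $U$, its arclength $|\Gamma_m|$ is bounded uniformly in $m$, so $\|\bm{v}-\bm{v}_N\|_{(L^2(\Gamma_m))^2}^2\le|\Gamma_m|(CN\tau_2^{2-N})^2$, and a square root gives the claim, with $C_5$ absorbing $|\Gamma_m|^{1/2}$ together with the $\ee^{k_s\rho}$, $(\tau_2^2-1)^{-3/2}$ and $\|\phi_\xi\|_{L^2(\Gamma_R)}$ factors. I expect the main obstacle to be exactly this loss of orthogonality on the non-circular $\Gamma_m$: unlike the $\Gamma_\rho$ estimate the sum cannot be diagonalized, so the whole argument rests on extracting the uniform decay $\tau_2^{-|n|}$ from the ratio $H_n^{(1)}(k_\xi r)/H_n^{(1)}(k_\xi R)$ via \Cref{lem:IPI}, which is precisely what forces both the large-order assumption $N\ge N_0$ and the clearance $r\ge R+\gamma$ of \Cref{assumption3}.
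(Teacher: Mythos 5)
Your proposal is correct and secures the stated bound through the same essential ingredients as the paper's proof: the decay estimates from \Cref{lem:IP15_lem3.3,lem:IPI} giving $|\beta_{\xi,n}(r)|\le 2\ee^{k_\xi(R+\gamma)}\tau_2^{-|n|}$ and, since $|n|>N\ge N_0$ forces $|n|/(R+\gamma)>k_\xi$, $|\alpha_{\xi,n}(r)|\le 4|n|(R+\gamma)^{-1}\ee^{k_\xi(R+\gamma)}\tau_2^{-|n|}$; the Parseval identity on the circle $\Gamma_R$ to bound $\sum_{|n|>N}|\hat{\phi}_{\xi,n}|^2$ by $(2\pi R)^{-1}\|\phi_\xi\|^2_{L^2(\Gamma_R)}$; the Appendix tail sum with $\tau_1$ replaced by $\tau_2$; and a factor $|\Gamma_m|$ absorbed into $C_5$. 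Where you genuinely deviate is in the treatment of non-orthogonality, and your version is the more careful one: the paper opens with the identity $\|\bm{v}-\bm{v}_N\|^2_{(L^2(\Gamma_m))^2}=\int_{\Gamma_m}\sum_{|n|>N}\bigl(|A_n(r)|^2+|B_n(r)|^2\bigr)\,\dd s(x)$, which tacitly treats the modes $\bm{U}_n,\bm{V}_n$ as orthogonal in $L^2(\Gamma_m)$; on a non-circular iterate the cross terms do not cancel (both $r$ and the arclength element depend on $\theta$), so that first equality should really be an inequality. Your pointwise route --- $|A|\le\sum_{|n|>N}|A_n|$, Cauchy--Schwarz against the summable geometric weights $|n|\tau_2^{-|n|}$, and only then integrating the resulting uniform bound over $\Gamma_m$ --- costs only absolute constants and is the rigorous form of what the paper intends. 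One caveat on your side: your bound $|\beta_{\xi,n}(r)|\le 2\ee^{k_s\rho}(R/r)^{|n|}$ applies \Cref{lem:IPI} at $t=k_\xi r$ and therefore needs an upper bound $r\le\rho$ on $\Gamma_m$, which you justify only informally by $\Gamma_m\subset U$; the paper sidesteps this by first invoking the monotonicity \eqref{eq:Hn1_property} to replace $r$ by $R+\gamma$ and then applying \Cref{lem:IPI} only at $t=k_\xi(R+\gamma)$ and $t=k_\xi R$ --- a detail worth adopting, since it removes the extra hypothesis without changing your argument.
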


\begin{proof}
	From \eqref{eq:us} and \eqref{eq:us_a}, it can be seen that
	\begin{align}\nonumber
		&\quad\left\|\bm{v}-\bm{v}_N\right\|^2_{\left(L^2(\Gamma_m)\right)^2}\\
		&=\int_{\Gamma_m}\sum_{|n|>N}\nonumber
		\left|\alpha_{p, n}(r)\hat{\phi}_{p, n}+\dfrac{\mathrm{i}n}{r}\beta_{s, n}(r)\hat{\phi}_{s, n}\right|^2
		+\left|\dfrac{\mathrm{i}n}{r}\beta_{p, n}(r)\hat{\phi}_{p, n}-\alpha_{s, n}(r)\hat{\phi}_{s, n}\right|^2\mathrm{d}s(x)\\
		\nonumber & \le 2\sum_{\xi\in\{p, s\}}\sum_{|n|>N}\int_{\Gamma_m}\left(\left|\alpha_{\xi, n}(r)\right|^2+\dfrac{n^2}{r^2}\left|\beta_{\xi, n}(r)\right|^2\right)\left|\hat{\phi}_{\xi, n}\right|^2\mathrm{d}s(x)\\
		\nonumber & \le 2|\Gamma_m|\max_{x\in\Gamma_m}\sum_{\xi\in\{p, s\}}\sum_{|n|>N}\left(\left|\alpha_{\xi, n}(r)\right|^2+\dfrac{n^2}{r^2}\left|\beta_{\xi, n}(r)\right|^2\right)\left|\hat{\phi}_{\xi, n}\right|^2\\
		& \le 2|\Gamma_m|\sum_{\xi\in\{p, s\}}\sum_{|n|>N}\left(\max_{x\in\Gamma_m}\left|\alpha_{\xi, n}(r)\right|^2+\dfrac{n^2}{(R+\gamma)^2}\left|\beta_{\xi, n}(R+\gamma)\right|^2\right)\left|\hat{\phi}_{\xi, n}\right|^2\label{eq:err_gamma}
	\end{align}
	In \eqref{eq:err_gamma}, for $\xi\in\{p, s\},$
	\begin{align*}
		\left|\alpha_{\xi, n}(r)\right|&\le\left(k_\xi+\dfrac{|n|}{r}\right)\left|\beta_{\xi, n}(r)\right|
		\le\left(k_\xi+\dfrac{|n|}{R+\gamma}\right)\left|\beta_{\xi, n}(R+\gamma)\right|\\
		&\le 2\left(k_\xi+\dfrac{|n|}{R+\gamma}\right)\mathrm{e}^{k_\xi(R+\gamma)}\tau_2^{-|n|},
	\end{align*}
	where $\tau_2=({R+\gamma})/{R},$ and \Cref{lem:IP15_lem3.3,lem:IPI} have been taken into account. Noticing 
	$$
	|n|>N>\dfrac{1}{2}(\mathrm{e}k_s\rho+1),
	$$ 
	we derive that
	\[
	\dfrac{|n|}{R+\gamma}\ge\dfrac{1}{2}\left(\mathrm{e}k_s+\dfrac{1}{R+\gamma}\right)>k_\xi,\quad\xi\in\{p,s\},
	\]
	which further gives that
	\begin{align}\label{eq:alpha_estimate}
		\left|\alpha_{\xi, n}(r)\right|&
		\le\dfrac{4|n|}{R+\gamma}\mathrm{e}^{k_\xi(R+\gamma)}\tau_2^{-|n|}.
	\end{align}
	Meanwhile, noticing \eqref{eq:H_n1_estimate}, it holds that
	\begin{align}\label{eq:beta_estimate2}
		\left|\beta_{\xi, n}(r)\right|
		\le{2}\mathrm{e}^{k_\xi(R+\gamma)}\tau_2^{-|n|}.
	\end{align}
	
	By \eqref{eq:alpha_estimate} and Cauchy inequality, we have
	\begin{align}
		\sum_{|n|>N}\left|\alpha_{\xi, n}(r)\right|^2\left|\hat{\phi}_{\xi, n}\right|^2
		& \le\left(\sum_{|n|>N}\left|\alpha_{\xi, n}(r)\right|^2\right)
		\left(\sum_{|n|>N}\left|\hat{\phi}_{\xi, n}\right|^2\right)\notag \\
		& \le\dfrac{8\mathrm{e}^{2k_\xi(R+\gamma)}}{\pi R(R+\gamma)^2}\left\|{\phi}_\xi\right\|^2_{L^2(\Gamma_R)}
		\sum_{|n|>N}n^2\tau_2^{-2|n|},\quad \xi\in\{p, s\}. \label{eq:alpha_phi}
	\end{align}
	Similarly, for $\xi\in\{p, s\}$, one can obtain
	\begin{equation}\label{eq:beta_psi_n}
		\sum_{|n|> N}{n^2}\left|\beta_{\xi, n}(r)\right|^2\left|\hat{\phi}_{\xi, n}\right|^2
		\le \frac{2\mathrm{e}^{2k_\xi(R+\gamma)}}{\pi R}\left\|{\phi}_\xi\right\|^2_{L^2(\Gamma_R)}
		\sum_{|n|>N}n^2\tau_2^{-2|n|}.
	\end{equation}
	
	Combining \eqref{eq:err_gamma}, \eqref{eq:alpha_phi}, \eqref{eq:beta_psi_n},  and a result similar to that in the appendix, we derive that 
	\begin{align*}
		\left\|\bm{v}-\bm{v}_N\right\|^2_{(L^2(\Gamma_m))^2} & \le\dfrac{40|\Gamma_m|\mathrm{e}^{2k_s(R+\gamma)}}{\pi R(R+\gamma)^2}\left(\|\phi_p\|^2_{L^2(\Gamma_R)}+\|\phi_s\|^2_{L^2(\Gamma_R)}\right)\sum_{|n|>N}n^2\tau_2^{-2|n|} \\
		& \le \tilde{C}_5 N^2\tau_2^{4-2N},
	\end{align*}
	where 
	$$
	\tilde{C}_5=\dfrac{320|\Gamma_m|\mathrm{e}^{2k_s(R+\gamma)}}{\pi R(R+\gamma)^2(\tau_2^2-1)^3}\left(\|\phi_p\|^2_{L^2(\Gamma_R)}+\|\phi_s\|^2_{L^2(\Gamma_R)}\right).
	$$
	
	This completes the proof.
\end{proof}
	
\begin{theorem}
	Under assumption \eqref{eq:assume}, there exist some positive constants $C_6$ and $C_7$ independent of $N$ and $\delta$ such that 
		\begin{equation*}
			\|\bm{v}_N-\bm{v}_N^\delta\|_{(L^2(\Gamma_m))^2}^2\le C_6\delta^2+C_7N^5\tau_1^{2N}\delta^2.
		\end{equation*}
\end{theorem}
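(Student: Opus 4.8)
The plan is to reproduce, \emph{mutatis mutandis}, the band‑splitting argument from the proof of \Cref{thm:error1}, the only—but decisive—difference being that the reconstruction coefficients are now sampled at the running radius $r=|x|$ on $\Gamma_m$ rather than at the fixed radius $\rho$. Since the perturbed Fourier coefficients $\hat{\phi}_{\xi,n}^\delta$ are still computed from the data on $\Gamma_\rho$ through \eqref{eq:sol}, the coefficient errors $|\hat{\phi}_{\xi,n}-\hat{\phi}_{\xi,n}^\delta|$ are \emph{identical} to those already bounded in \Cref{thm:error1}; what changes is only the multiplier in front. First I would pass to
\[
\|\bm{v}_N-\bm{v}_N^\delta\|_{(L^2(\Gamma_m))^2}^2\le 2|\Gamma_m|\max_{x\in\Gamma_m}\sum_{\xi\in\{p,s\}}\sum_{n=-N}^{N}I_{\xi,n}(r),\quad I_{\xi,n}(r)=\Bigl(|\alpha_{\xi,n}(r)|^2+\tfrac{n^2}{r^2}|\beta_{\xi,n}(r)|^2\Bigr)\bigl|\hat{\phi}_{\xi,n}-\hat{\phi}_{\xi,n}^\delta\bigr|^2,
\]
exactly as in the companion $\Gamma_m$ estimate, and then split the index set into $n=0$, $1\le|n|\le 4k_s\rho$, and $4k_s\rho+1\le|n|\le N$.

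For the two low‑frequency bands I would argue as before. On $\Gamma_m$ one has $r\ge R+\gamma>R$, so $|\beta_{\xi,n}(r)|\le 1$ by \Cref{lem:IP15_lem3.3}, and $|\alpha_{\xi,n}(r)|\le(k_\xi+|n|/r)|\beta_{\xi,n}(r)|$ is bounded by an $N$‑independent constant once $|n|\le 4k_s\rho$. Reusing the coefficient‑error bound $|\hat{\phi}_{\xi,n}-\hat{\phi}_{\xi,n}^\delta|\le M\delta\|\bm{v}\|_{(L^2(\Gamma_\rho))^2}/\sqrt{2\pi\rho}$ from the proof of \Cref{thm:error1} (and the separate $n=0$ bound there), each of the at most $8k_s\rho+1$ terms in this range is $O(\delta^2)$, and their sum produces the $C_6\delta^2$ part with a constant independent of $N$.

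The crux is the high‑frequency band $4k_s\rho+1\le|n|\le N$. There the error bounds from \Cref{thm:error1} carry the factor $1/|\beta_{\xi,n}(\rho)|$, so $I_{\xi,n}(r)$ inherits the ratio
\[
\frac{|\beta_{\xi,n}(r)|^2}{|\beta_{\xi,n}(\rho)|^2}=\frac{|H_n^{(1)}(k_\xi r)|^2}{|H_n^{(1)}(k_\xi\rho)|^2},
\]
the $H_n^{(1)}(k_\xi R)$ normalisations cancelling. This ratio equals $1$ on $\Gamma_\rho$ (which is why the clean $N^5\delta^2$ emerged there), but is now $\ge 1$ because $r\le\rho$; controlling it uniformly is the main obstacle. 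Applying the two‑sided estimate of \Cref{lem:IPI} to numerator and denominator—legitimate since $|n|\ge 4k_s\rho+1>(\mathrm{e}k_\xi\rho+1)/2$ and $r\le\rho$—I would obtain
\[
\frac{|H_n^{(1)}(k_\xi r)|^2}{|H_n^{(1)}(k_\xi\rho)|^2}\le 4\mathrm{e}^{2k_\xi r}\Bigl(\frac{\rho}{r}\Bigr)^{2|n|}\le 4\mathrm{e}^{2k_s\rho}\,\tau_1^{2N},
\]
using $\rho/r\le\rho/(R+\gamma)<\rho/R=\tau_1$ and $|n|\le N$.

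Pulling this single uniform factor out front, combining it with $|\alpha_{\xi,n}(r)|\le\frac{5|n|}{4r}|\beta_{\xi,n}(r)|$ and $1/r\le 1/(R+\gamma)$ exactly as in the derivation of \eqref{eq:I1_3}, I expect each high‑frequency term to satisfy $I_{\xi,n}(r)\le C\,n^4\tau_1^{2N}\delta^2\|\bm{v}\|_{(L^2(\Gamma_\rho))^2}^2/|\Lambda_n|^2$. Here the key bookkeeping point, needed so that the final constant is $N$‑independent, is that $\inf_{n}|\Lambda_n|>0$ (indeed $\Lambda_n\to(k_p^2+k_s^2)/2$), whence $\sup_n 1/|\Lambda_n|^2<\infty$. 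Bounding $n^4\le N^4$ and summing over the $O(N)$ indices then yields the $C_7N^5\tau_1^{2N}\delta^2$ term, and adding the low‑frequency contribution completes the estimate. In short, the whole argument is \Cref{thm:error1} carried through verbatim with $r$ in place of $\rho$, the only genuinely new ingredient being the uniform Hankel‑ratio bound by $\tau_1^{2N}$.
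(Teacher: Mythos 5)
Your proposal is correct and follows essentially the same route as the paper: the same splitting into the band $|n|\le 4k_s\rho$ (bounded via the uniform coefficient-error estimate, giving $C_6\delta^2$) and the band $4k_s\rho+1\le|n|\le N$, with \Cref{lem:IPI} supplying the $\tau_1^{2N}$ growth factor and the uniform lower bound on $|\Lambda_n|$ keeping the constants $N$-independent. The only cosmetic difference is that you bound the Hankel ratio $|\beta_{\xi,n}(r)|^2/|\beta_{\xi,n}(\rho)|^2$ in one stroke at radius $r$ (which tacitly uses $r\le\rho$ on the iterates), whereas the paper first applies the monotonicity bound $|\beta_{\xi,n}(r)|\le|\beta_{\xi,n}(R+\gamma)|\le 1$ and then separately estimates $|\beta_{\xi,n}(\rho)|^{-2}\le 4\ee^{2k_sR}\tau_1^{2n}$; both yield the same $C_7N^5\tau_1^{2N}\delta^2$ term.
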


\begin{proof}
	 From \eqref{eq:us_a} and \eqref{eq:us_a_noisy}, we obtain that
	\begin{align*}
		& \quad\|\bm{v}_N-\bm{v}_N^\delta\|_{\left(L^2(\Gamma_m)\right)^2}^2 \\
		& =\sum_{|n|\le N}\int_{\Gamma_m}\left|\alpha_{p, n}(r)\left(\hat{\phi}_{p, n}-\hat{\phi}_{p, n}^\delta\right)+\dfrac{\ii n}{r}\beta_{s, n}(r)\left(\hat{\phi}_{s, n}-\hat{\phi}_{s, n}^\delta\right)\right|^2 \\
		&\quad +\left|\dfrac{\ii n}{r}\beta_{p, n}(r)\left(\hat{\phi}_{p, n}-\hat{\phi}_{p, n}^\delta\right)-\alpha_{s, n}(r)\left(\hat{\phi}_{s, n}-\hat{\phi}_{s, n}^\delta\right)\right|^2 \mathrm{d}s(x)\\
		& \le 2\sum_{\xi\in\{p, s\}}\sum_{|n|\le N}\int_{\Gamma_m}\left(\left|\alpha_{\xi, n}(r)\right|^2+\dfrac{n^2}{r^2}\left|\beta_{\xi, n}(r)\right|^2\right)\left|\hat{\phi}_{\xi, n}-\hat{\phi}_{\xi, n}^\delta\right|^2\mathrm{d}s(x).
	\end{align*}
	Noticing \eqref{eq:Hn1_property}, we obtain that for $\xi\in\{p, s\},$
	\begin{align*}
		\left|\beta_{\xi, n}(r)\right|&\le\left|\beta_{\xi, n}(R+\gamma)\right|,\\
		\left|\alpha_{\xi, n}(r)\right|&\le\left(k_\xi +\dfrac{|n|}{r}\right)\left|\beta_{\xi, n}(r)\right|
		\le\left(k_\xi +\dfrac{|n|}{R+\gamma}\right)\left|\beta_{\xi, n}(R+\gamma)\right|.
	\end{align*}
	Thus,
	\begin{align*}
		&\quad\|\bm{v}_N-\bm{v}_N^\delta\|_{\left(L^2(\Gamma_m)\right)^2}^2 \\
		& \le 6\sum_{\xi\in\{p, s\}}\sum_{|n|\le N}\int_{\Gamma_m}\left(k_\xi^2+\dfrac{n^2}{(R+\gamma)^2}\right)\left|\beta_{\xi, n}(R+\gamma)\right|^2\left|\hat{\phi}_{\xi, n}-\hat{\phi}_{\xi, n}^\delta\right|^2\mathrm{d}s(x)\\
		& = 6|\Gamma_m|\sum_{\xi\in\{p, s\}}\sum_{|n|\le N}\left(k_\xi^2+\dfrac{n^2}{(R+\gamma)^2}\right)\left|\beta_{\xi, n}(R+\gamma)\right|^2\left|\hat{\phi}_{\xi, n}-\hat{\phi}_{\xi, n}^\delta\right|^2
	\end{align*}
	
	According to \Cref{lem:IP15_lem3.3},  we know that $\left|\beta_{\xi, n}(R+\gamma)\right|\le 1, \xi\in\{p, s\}$. Therefore,
	\begin{align*}
		&\quad \|\bm{v}_N-\bm{v}_N^\delta\|_{\left(L^2(\Gamma_m)\right)^2}^2\\
		&\le 6|\Gamma_m|\sum_{|n|\le N}\left(k_s^2+\dfrac{n^2}{(R+\gamma)^2}\right)\sum_{\xi\in\{p, s\}}\left|\hat{\phi}_{\xi, n}-\hat{\phi}_{\xi, n}^\delta\right|^2\nonumber \\
		& \le 6|\Gamma_m|\left(\sum_{|n|\le 4 k_s\rho}+\sum_{4k_s\rho+1\le|n|\le N}\right)\left(k_s^2+\dfrac{n^2}{(R+\gamma)^2}\right)\sum_{\xi\in\{p, s\}}\left|\hat{\phi}_{\xi, n}-\hat{\phi}_{\xi, n}^\delta\right|^2\nonumber \\
		&=  6|\Gamma_m|(I_1+I_2),\nonumber
	\end{align*}
	where 
	\begin{align*}
		I_1& =\sum_{|n|\le 4 k_s\rho}\left(k_s^2+\dfrac{n^2}{(R+\gamma)^2}\right)\sum_{\xi\in\{p, s\}}\left|\hat{\phi}_{\xi, n}-\hat{\phi}_{\xi, n}^\delta\right|^2,\\
		I_2 & =\sum_{4k_s\rho+1\le|n|\le N}\left(k_s^2+\dfrac{n^2}{(R+\gamma)^2}\right)\sum_{\xi\in\{p, s\}}\left|\hat{\phi}_{\xi, n}-\hat{\phi}_{\xi, n}^\delta\right|^2.
	\end{align*}
	
	To estimate $I_1$ and $I_2,$ we first point out that, similar to the analysis in the proof of \Cref{thm:error1}, we know that there exists some constant $M_2>0$ independent of $k_\xi,\,\xi\in\{p, s\}$ and $\delta$ such that 
	\begin{equation}\label{eq:phierr}
		\left|\hat{\phi}_{\xi, n}-\hat{\phi}_{\xi, n}^\delta\right|\le\dfrac{M_2\delta}{\sqrt{2\pi\rho}}\|\bm{v}\|_{\left(L^2(\Gamma_\rho)\right)^2},\quad |n|\le 4 k_s\rho,
	\end{equation}
 and 
	\begin{equation}\label{eq:psierr}
		\left|\hat{\phi}_{\xi, n}-\hat{\phi}_{\xi, n}^\delta\right|\le\dfrac{5|n|}{2\rho\left|\Lambda_n\beta_{\xi, n}(\rho)\right|}\dfrac{\delta}{\sqrt{2\pi\rho}}\|\bm{v}\|_{\left(L^2(\Gamma_\rho)\right)^2},\quad 4k_s\rho+1\le|n|\le N.	
	\end{equation}
	
	To estimate $I_1,$ we notice that
	$$
	    k_s^2+\frac{n^2}{(R+\gamma)^2}\le k_s^2+\frac{16k_s^2\rho^2}{(R+\gamma)^2}\le\frac{17k_s^2\rho^2}{(R+\gamma)^2},
	$$
	which illustrates that
	\begin{align*}
		I_1&\le\sum_{|n|\le 4 k_s\rho}\frac{17k_s^2\rho^2}{(R+\gamma)^2}\left(\left|\hat{\phi}_{p, n}-\hat{\phi}_{p, n}^\delta\right|^2+\left|\hat{\phi}_{s, n}-\hat{\phi}_{s, n}^\delta\right|^2\right)\\
		\nonumber &\le\frac{17(8k_s\rho+1)k_s^2\rho}{\pi(R+\gamma)^2}{M_2^2\delta^2}\|\bmv\|_{\left(L^2(\Gamma_\rho)\right)^2}^2\\
		& = \tilde{C}_6\delta^2,
	\end{align*}
	with
	$$
		\tilde{C}_6=\frac{17(8k_s\rho+1)k_s^2\rho}{\pi(R+\gamma)^2}{M_2^2}\|\bm{v}\|_{\left(L^2(\Gamma_\rho)\right)^2}^2.
	$$
	
	Now, we consider the estimate for $I_2.$ For later analysis, we first recall that (\cite{LiWangWangZhao16}) $\Lambda_n\ne 0,\ \forall n\in\mathbb{Z},$ thus there exists some constant $c_3>0$ such that
	\begin{equation}\label{eq:c3_new}
	\frac{1}{|\Lambda_n|^2}\le c_3.
	\end{equation}

	From \Cref{lem:IPI}, we know that for $\xi\in\{p, s\},$
	\begin{equation*}
		\left|\beta_{\xi, n}(\rho)\right|^{-2}=\frac{\left|H_n^{(1)}(k_\xi R)\right|^2}{\left|H_n^{(1)}(k_\xi\rho)\right|^2}
		\le 4\ee^{2k_\xi R}\tau_1^{2n}\le 4\ee^{2k_s R}\tau_1^{2n}.
	\end{equation*}
This, together with \eqref{eq:psierr} and \eqref{eq:c3_new}, gives
	\begin{align}\nonumber
		\left|\hat{\phi}_{\xi, n}-\hat{\phi}_{\xi, n}^\delta\right|^2
		&\le\frac{25n^2}{4\rho^2|\Lambda_n|^2}4\ee^{2k_sR}\tau_1^{2n}\frac{\delta^2}{2\pi\rho}\|\bmv\|^2_{(L^2(\Gamma_\rho))^2}\\
		\label{eq:phierrorpsi} &\le\frac{25c_3\ee^{2k_sR}}{2\pi\rho^3}\|\bmv\|^2_{(L^2(\Gamma_\rho))^2}n^2\delta^2\tau_1^{2n},\quad\text{for }\xi\in\{p, s\}.
	\end{align}
	Further, we proceed with the proof by pointing out that 
	\begin{align*}
		I_2&\le\sum_{4k_s\rho+1\le|n|\le N}\frac{2n^2}{(R+\gamma)^2}\sum_{\xi\in\{p, s\}}\left|\hat{\phi}_{\xi, n}-\hat{\phi}^\delta_{\xi, n}\right|^2\\
		& \le\frac{50c_3\ee^{2k_sR}\delta^2\|\bmv\|^2_{(L^2(\Gamma_\rho))^2}}{\pi\rho^3(R+\gamma)^2}\sum_{4k_s\rho+1\le|n|\le N}n^4\tau_1^{2n}\\
		&\le\frac{50c_3\ee^{2k_sR}\delta^2\|\bmv\|^2_{(L^2(\Gamma_\rho))^2}}{\pi\rho^3(R+\gamma)^2}(2N+1)N^4\tau_1^{2n}\\&\le\frac{150c_3\ee^{2k_sR}\|\bmv\|^2_{(L^2(\Gamma_\rho))^2}}{\pi\rho^3(R+\gamma)^2}N^5\delta^2\tau_1^{2N} \\
		& = \tilde{C}_7 N^5\tau_1^{2N}\delta^2.
	\end{align*}
	where 
	$$
	\tilde{C}_7=\frac{150c_3\ee^{2k_sR}\|\bmv\|^2_{(L^2(\Gamma_\rho))^2}}{\pi\rho^3(R+\gamma)^2}.
	$$

	Finally, let $C_6=6\tilde{C}_6|\Gamma_m|,\,C_7=6\tilde{C}_7|\Gamma_m|$, then we obtain the estimate
	\begin{equation*}
		\|\bm{v}_{N}-\bm{v}_N^\delta\|^2_{(L^2(\Gamma_m))^2}\le C_6\delta^2+C_7N^5\tau_1^{2N}\delta^2.
	\end{equation*}
	\end{proof}
	
	\begin{theorem}
	Under \Cref{assumption1} and assumption \eqref{eq:assume}, it holds that
	\begin{equation}\label{eq:deriverror}
		\left\|\dfrac{\partial\bmv}{\partial\hat{x}}-\dfrac{\partial\bm{v}_N}{\partial\hat{x}}\right\|_{\left(L^2(\Gamma_m)\right)^2}^2 \le C_8N^4\tau_2^{-2N},
	\end{equation}
	where $\tau_2=(R+\gamma)/R$, and $C_8$ is a constant independent of $N.$
	\end{theorem}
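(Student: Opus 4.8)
The plan is to mirror the structure of the two preceding $L^2(\Gamma_m)$ theorems, since the radial direction $\hat x=x/|x|=\bm e_r$ makes $\partial/\partial\hat x$ the radial derivative $\partial_r$. First I would observe that $\partial_r\bmv$ and $\partial_r\bm v_N$ are given by the same series displayed just before the statement (with $\hat\phi^\delta$ replaced by $\hat\phi$ and the full/truncated ranges of $n$), so that the difference $\partial_r\bmv-\partial_r\bm v_N$ is exactly the tail $\sum_{|n|>N}$ of that series. Invoking the orthonormality of $\{\bm U_n,\bm V_n\}$ on $\Gamma_\rho$ (and the analogous relation on a circle of radius $r$), the squared $L^2(\Gamma_m)$ norm collapses to an integral over $\Gamma_m$ of a tail sum of the squared scalar coefficients, namely the $\bm U_n$-coefficient $\kappa_{p,n}\hat\phi_{p,n}+\tfrac{\ii n}{r}\alpha_{s,n}\hat\phi_{s,n}-\tfrac{\ii n}{r^2}\beta_{s,n}\hat\phi_{s,n}$ and its $\bm V_n$-counterpart, summed over $|n|>N$.

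Next I would bound each coefficient termwise, using $|a+b+c|^2\le 3(|a|^2+|b|^2+|c|^2)$ to split the three contributions. The key inputs are the just-derived pointwise bounds \eqref{eq:alpha_est} and \eqref{eq:kappa_est}, which give $|\alpha_{\xi,n}(r)|\le(|n|/r+k_\xi)|\beta_{\xi,n}(r)|$ and $|\kappa_{\xi,n}(r)|\le\frac{2n^2+k_\xi r}{r^2}|\beta_{\xi,n}(r)|$, together with the monotonicity \Cref{lem:IP15_lem3.3} and the Hankel bound \eqref{eq:H_n1_estimate} of \Cref{lem:IPI}, which under Assumption~\ref{assumption3} ($r\ge R+\gamma$ on $\Gamma_m$) yield $|\beta_{\xi,n}(r)|\le|\beta_{\xi,n}(R+\gamma)|\le 2\ee^{k_\xi(R+\gamma)}\tau_2^{-|n|}$. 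The point to emphasize is the scaling in $n$: both the $\kappa_{\xi,n}$ term and the $\tfrac{n}{r}\alpha_{\xi,n}$ term grow like $n^2\tau_2^{-|n|}$, whereas the $\tfrac{n}{r^2}\beta_{\xi,n}$ term grows only like $n\,\tau_2^{-|n|}$; hence the worst-case coefficient is $O(n^2\tau_2^{-|n|})$, uniformly over $x\in\Gamma_m$.

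Then, exactly as in the earlier proofs, I would apply Cauchy--Schwarz to separate the deterministic coefficient factors from the Fourier coefficients, writing $\sum_{|n|>N}|\kappa_{\xi,n}(r)|^2|\hat\phi_{\xi,n}|^2\le(\sum_{|n|>N}|\kappa_{\xi,n}(r)|^2)(\sum_{|n|>N}|\hat\phi_{\xi,n}|^2)$ and similarly for the other two terms. Parseval's identity bounds $\sum_{|n|>N}|\hat\phi_{\xi,n}|^2\le\frac{1}{\pi R}\|\phi_\xi\|_{L^2(\Gamma_R)}^2$, a constant independent of $N$, while the leading deterministic sum reduces to $\sum_{|n|>N}n^4\tau_2^{-2|n|}$. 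Using a fourth-moment geometric-tail estimate analogous to the appendix bound used before, namely $\sum_{|n|>N}n^4\tau_2^{-2|n|}\le C\,N^4\tau_2^{-2N}$ (with $C$ depending on $(\tau_2^2-1)^{-1}$ only), and absorbing the finitely many factors $\ee^{2k_\xi(R+\gamma)}$, $(R+\gamma)^{-2}$ and the uniformly bounded arclength $|\Gamma_m|$ into $C_8$, I obtain the claimed bound $C_8N^4\tau_2^{-2N}$.

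The main obstacle I anticipate is twofold. First, the genuinely new analytic ingredient is controlling the second-derivative term $\kappa_{\xi,n}$, which is where the immediately preceding lemma on ${H_n^{(1)}}''/H_n^{(1)}$ and its corollary \eqref{eq:kappa_est} are essential; one must verify that this term contributes the $n^2$ growth and does not secretly raise the power further. Second, I need to establish (or cite from the appendix) the $n^4$-weighted geometric-tail estimate and confirm that the leading power is $N^4$ and not higher: the $n^2$ coefficient scaling squares to $n^4$, and the tail sum $\sum_{|n|>N}n^4\tau_2^{-2|n|}$ is dominated by its first terms near $|n|=N$, which is precisely what produces $N^4\tau_2^{-2N}$ under Assumption~\eqref{eq:assume}.
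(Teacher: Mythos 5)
Your proposal is correct and follows essentially the same route as the paper's own proof: reduction to the radial derivative $\partial_r(\bm{v}-\bm{v}_N)$ via \Cref{assumption1}, the $|a+b+c|^2\le 3(|a|^2+|b|^2+|c|^2)$ splitting of the three terms per mode, the bounds \eqref{eq:alpha_est}--\eqref{eq:kappa_est} together with \Cref{lem:IP15_lem3.3,lem:IPI} under \Cref{assumption3} to obtain the dominant $n^2\tau_2^{-|n|}$ coefficient scaling (with $\kappa_{\xi,n}$ and $\tfrac{n}{r}\alpha_{\xi,n}$ leading, exactly as the paper finds), then the Cauchy-type separation, Parseval on $\Gamma_R$, and the appendix-style tail estimate $\sum_{|n|>N}n^4\tau_2^{-2|n|}\le \tilde{C}_8N^4\tau_2^{-2N}$. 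No gaps relative to the paper's argument.
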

	
	\begin{proof}
	Under \Cref{assumption1}, we derive that
	\begin{align*}
		\left|\dfrac{\partial\bmv}{\partial\hat{x}}-\dfrac{\partial\bm{v}_N}
		{\partial\hat{x}}\right|=\left|{\partial_r(\bmv-\bm{v}_N)}\right|,\quad \text{on}\ \Gamma_m.
	\end{align*}
	\begin{align*}
	\partial_r(\bmv-\bm{v}_N)= & \sum_{|n|>N}\left(\kappa_{p, n}(r)\hat{\phi}_{p, n}+\dfrac{\ii n}{r}\alpha_{s, n}(r)\hat{\phi}_{s, n}-\dfrac{\ii n}{r^2}\beta_{s, n}(r)\hat{\phi}_{s, n}\right)\bm{U}_n\\
	&+\left(-\dfrac{\ii n}{r^2}\beta_{p, n}(r)\hat{\phi}_{p, n}+\dfrac{\ii n}{r}\alpha_{p, n}(r)\hat{\phi}_{p, n}-\kappa_{s, n}(r)\hat{\phi}_{s, n}\right)\bm{V}_n.
	\end{align*}
	Thus,
	\begin{align*}
		&\quad \int_{\Gamma_m}\left|\dfrac{\partial\bmv}{\partial\hat{x}}-\dfrac{\partial\bm{v}_N}
		{\partial\hat{x}}\right|^2\mathrm{d}s(x)=\int_{\Gamma_m} \left|\partial_r(\bmv-\bm{v}_N)\right|^2\mathrm{d}s(x)\\\nonumber
		& =\sum_{|n|>N}\int_{\Gamma_m}\left(\left|\kappa_{p, n}(r)\hat{\phi}_{p, n}+\dfrac{\ii n}{r}\alpha_{s, n}(r)\hat{\phi}_{s, n}-\dfrac{\ii n}{r^2}\beta_{s, n}(r)\hat{\phi}_{s, n}\right|^2\right.\\
		& \quad\left.+\left|-\dfrac{\ii n}{r^2}\beta_{p, n}(r)\hat{\phi}_{p, n}+\dfrac{\ii n}{r}\alpha_{p, n}(r)\hat{\phi}_{p, n}-\kappa_{s, n}(r)\hat{\phi}_{s, n}\right|^2\right)\mathrm{d}s(x) \\
		& \le 3\sum_{|n|>N}\int_{\Gamma_m}
		\left(\left|\kappa_{p, n}(r)\right|^2+\dfrac{n^2}{r^4}\left|\beta_{p, n}(r)
		\right|^2+\dfrac{n^2}{r^2}\left|\alpha_{p, n}(r)\right|^2\right)\left|\hat{\phi}_{p, n}\right|^2 \\
		& \quad+\left(\left|\kappa_{s, n}(r)\right|^2+\dfrac{n^2}{r^4}\left|\beta_{s, n}(r)
		\right|^2+\dfrac{n^2}{r^2}\left|\alpha_{s, n}(r)\right|^2\right)\left|\hat{\phi}_{s, n}\right|^2
		\mathrm{d}s(x).
	\end{align*}
	
	In view of \eqref{eq:alpha_est}--\eqref{eq:kappa_est} and \Cref{lem:IPI}, we derive that for $x\in\Gamma_m$ with $r=|x|,$
	\begin{align*}
		\left|\beta_{\xi, n}(r)\right|&\le 2\mathrm{e}^{k_\xi(R+\gamma)}\tau_2^{-|n|},\quad
		\left|\alpha_{\xi, n}(r)\right|\le 2\left(\dfrac{|n|}{R+\gamma}+k_\xi\right)\mathrm{e}^{k_\xi(R+\gamma)}\tau_2^{-|n|},\\ 
		\left|\kappa_{\xi, n}(r)\right|&
		\le 2\left(\dfrac{2n^2+k_\xi (R+\gamma)}{(R+\gamma)^2}\right)\mathrm{e}^{k_\xi(R+\gamma)}\tau_2^{-|n|}.
	\end{align*}	
	These together with \eqref{eq:alpha_estimate}--\eqref{eq:beta_estimate2} and the assumption that $N>4k_s\rho$ lead to
	\begin{align*}
		&\quad\int_{\Gamma_m}\left|\dfrac{\partial\bm{v}}{\partial\hat{x}}-\dfrac{\partial\bm{v}_N}{\partial\hat{x}}\right|^2\mathrm{d}s(x)\\
		&\le12\mathrm{e}^{2k_s(R+\gamma)}\sum_{|n|>N}\tau_2^{-2|n|}\int_{\Gamma_m}\left(\left(\dfrac{2n^2+k_s (R+\gamma)}{(R+\gamma)^2}\right)^2+
		\dfrac{n^2}{(R+\gamma)^4}\right.\\
		&\quad\left.+\dfrac{n^2}{(R+\gamma)^2}\left(\dfrac{|n|}{R+\gamma}+k_s\right)^2\right)\left(\left|\hat{\phi}_{p, n}\right|^2+\left|\hat{\phi}_{s, n}\right|^2\right)\mathrm{d}s(x)\\
		&\le \frac{12\mathrm{e}^{2k_s(R+\gamma)}}{(R+\gamma)^4}\sum_{|n|>N}\tau_2^{-2|n|}\int_{\Gamma_m}\left({13n^4}+
		{n^2}\right)\left(\left|\hat{\phi}_{p, n}\right|^2+\left|\hat{\phi}_{s, n}\right|^2\right)\mathrm{d}s(x)\\
		&\le\dfrac{168|\Gamma_m|\mathrm{e}^{2k_s(R+\gamma)}}{(R+\gamma)^4}\sum_{|n|>N}n^4\tau_2^{-2|n|}\left(\left|\hat{\phi}_{p, n}\right|^2+\left|\hat{\phi}_{s, n}\right|^2\right),
	\end{align*}
	where we have used the fact that $n^2\le n^4$ for $|n|\ge 1$. Taking the Cauchy-Schwarz inequality into account, we proceed the proof by showing that 
	\begin{align*}
		&\quad\int_{\Gamma_m}\left|\dfrac{\partial\bm{v}}{\partial\hat{x}}-\dfrac{\partial\bm{v}_N}{\partial\hat{x}}\right|^2\mathrm{d}s(x)\\
		&\le\dfrac{168|\Gamma_m|\mathrm{e}^{2k_s(R+\gamma)}}{(R+\gamma)^4}\left(\sum_{|n|>N}n^4\tau_2^{-2|n|}\right)\sum_{|n|>N}\left(\left|\hat{\phi}_{p, n}\right|^2+\left|\hat{\phi}_{s, n}\right|^2\right)\\
		&\le\dfrac{84|\Gamma_m|\mathrm{e}^{2k_s(R+\gamma)}}{(R+\gamma)^4\pi R}\left(\sum_{|n|>N}n^4\tau_2^{-2|n|}\right)\left(\left\|\phi_p\right\|_{L^2(\Gamma_R)}^2+\left\|\phi_s\right\|_{L^2(\Gamma_R)}^2\right).
	\end{align*}
	
	Analogous to the proof in the Appendix, it can be shown that there exists a positive constant $\tilde{C}_8>0$ such that 
	\begin{equation*}
		\sum_{|n|>N}n^4\tau_2^{-2|n|}\le \tilde{C}_8N^4\tau_2^{-2N}.
	\end{equation*}
	By letting 
	$$
	C_8=\dfrac{84|\Gamma_m|\mathrm{e}^{2k_s(R+\gamma)}}{(R+\gamma)^4\pi R}\left(\left\|\phi_p\right\|_{L^2(\Gamma_R)}^2+\left\|\phi_s\right\|_{L^2(\Gamma_R)}^2\right)\tilde{C}_8,
	$$ 
	we could obtain the following estimate:
	\begin{equation*}
		\left\|\dfrac{\partial\bmv}{\partial\hat{x}}-\dfrac{\partial\bm{v}_N}{\partial\hat{x}}\right\|^2_{\left(L^2(\Gamma_m)\right)^2}
		\le C_8N^4\tau_2^{-2N},
	\end{equation*}
	which completes the proof of \eqref{eq:deriverror}.
	\end{proof}
	
	\begin{theorem}
			Under \Cref{assumption1} and assumption \eqref{eq:assume}, it holds that
			\begin{align*}
				\left\|\dfrac{\partial\bm{v}_N}{\partial\hat{x}}-\dfrac{\partial\bm{v}_N^\delta}{\partial\hat{x}}\right\|_{\left(L^2(\Gamma_m)\right)^2}^2 \le C_9\delta^2+C_{10}N^7\tau_1^{2N}\delta^2,
			\end{align*}
			where $C_9$ and $C_{10}$ are constants independent of $N$ and $\delta$.
	\end{theorem}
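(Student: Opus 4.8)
The plan is to reduce this estimate to the one already obtained for $\|\bm{v}_N-\bm{v}_N^\delta\|_{(L^2(\Gamma_m))^2}$, at the cost of an extra polynomial factor in $n$ produced by the radial differentiation. Under \Cref{assumption1} the differentiation $\partial/\partial\hat{x}$ coincides with $\partial_r$ on $\Gamma_m$, so that
\begin{align*}
	\left\|\dfrac{\partial\bm{v}_N}{\partial\hat{x}}-\dfrac{\partial\bm{v}_N^\delta}{\partial\hat{x}}\right\|_{(L^2(\Gamma_m))^2}^2=\int_{\Gamma_m}\left|\partial_r\bigl(\bm{v}_N-\bm{v}_N^\delta\bigr)\right|^2\mathrm{d}s(x).
\end{align*}
First I would write $\partial_r(\bm{v}_N-\bm{v}_N^\delta)$ using the series representation of $\partial_r\bm{v}_N^\delta$ recorded above, simply replacing each $\hat{\phi}_{\xi,n}^\delta$ by $\hat{\phi}_{\xi,n}-\hat{\phi}_{\xi,n}^\delta$. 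The orthogonality of $\{\bm{U}_n,\bm{V}_n\}$ over $\Gamma_m$ then collapses the integral into a finite sum over $|n|\le N$ of the coefficient moduli squared times $|\hat{\phi}_{\xi,n}-\hat{\phi}_{\xi,n}^\delta|^2$, with the same prefactors $|\kappa_{\xi,n}(r)|^2+\tfrac{n^2}{r^4}|\beta_{\xi,n}(r)|^2+\tfrac{n^2}{r^2}|\alpha_{\xi,n}(r)|^2$ that appeared in the derivative-truncation estimate.

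Next I would bound these prefactors. Using \eqref{eq:alpha_est}--\eqref{eq:kappa_est} to control $\alpha_{\xi,n}$ and $\kappa_{\xi,n}$ by $\beta_{\xi,n}$, together with \Cref{lem:IP15_lem3.3} and \Cref{assumption3} (so that $|\beta_{\xi,n}(r)|\le|\beta_{\xi,n}(R+\gamma)|\le 1$ for $r\ge R+\gamma$), each prefactor is dominated by $C\,n^4$ for a constant $C$ independent of $N$ and $\delta$. Note that here, unlike in the truncated-tail estimates, the sum runs over $|n|\le N$, so the decaying factor $\tau_2^{-|n|}$ plays no role and is discarded in favour of the uniform bound $|\beta|\le1$. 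This reduces the task to estimating $\sum_{|n|\le N}n^4\,|\hat{\phi}_{\xi,n}-\hat{\phi}_{\xi,n}^\delta|^2$, which I would split, exactly as before and justified by assumption \eqref{eq:assume}, into the ranges $|n|\le 4k_s\rho$ and $4k_s\rho+1\le|n|\le N$.

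For the low modes $|n|\le 4k_s\rho$ the sum has boundedly many terms and $n^4$ is bounded there, so the uniform coefficient bound \eqref{eq:phierr} furnishes a contribution of the form $C_9\delta^2$. For the high modes I would insert the per-coefficient estimate \eqref{eq:phierrorpsi}, namely $|\hat{\phi}_{\xi,n}-\hat{\phi}_{\xi,n}^\delta|^2\le C\,n^2\delta^2\tau_1^{2n}$, whose $\tau_1^{2n}$ growth originates from the inverse Hankel-quotient bound $|\beta_{\xi,n}(\rho)|^{-2}\le 4\mathrm{e}^{2k_sR}\tau_1^{2n}$ together with \eqref{eq:c3_new}. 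Combining it with the $n^4$ prefactor produces a summand of order $n^6\delta^2\tau_1^{2n}$; bounding $\tau_1^{2n}\le\tau_1^{2N}$ and $n^6\le N^6$ over the at most $2N+1$ terms yields a contribution $\le C_{10}N^7\tau_1^{2N}\delta^2$. Adding the two ranges and absorbing $|\Gamma_m|$ into the constants gives the claim.

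The main obstacle, as in the companion estimates, is essentially one of careful bookkeeping: I must track the polynomial degree in $n$ faithfully, since the radial derivative inflates the prefactor from the $n^2$ of the function estimate to $n^4$, and this is exactly what upgrades the exponent from $N^5$ to $N^7$. The only genuinely delicate analytic ingredient—the exponential-in-$n$ blow-up $\tau_1^{2n}$ of the noise-propagation constants, entering through the inverse of the Hankel values at the smaller radius—has already been isolated in the proof of the preceding theorem, and here it is simply carried through the differentiated series.
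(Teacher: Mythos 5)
Your proposal reproduces the paper's proof essentially step for step: reduce $\partial/\partial\hat{x}$ to $\partial_r$ via \Cref{assumption1}, dominate the prefactors $|\kappa_{\xi,n}|^2+\frac{n^2}{r^4}|\beta_{\xi,n}|^2+\frac{n^2}{r^2}|\alpha_{\xi,n}|^2$ by $Cn^4$ using \eqref{eq:alpha_est}--\eqref{eq:kappa_est} together with $|\beta_{\xi,n}(r)|\le 1$ and $r\ge R+\gamma$, split the sum at $|n|=4k_s\rho$ as permitted by assumption \eqref{eq:assume}, and then apply \eqref{eq:phierr} on the low modes (giving $C_9\delta^2$) and \eqref{eq:phierrorpsi} with its $\tau_1^{2n}$ noise amplification on the high modes (giving, after $n^6\le N^6$ and $2N+1$ terms, $C_{10}N^7\tau_1^{2N}\delta^2$), exactly as the paper does. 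The only quibble is your appeal to ``orthogonality of $\{\bm{U}_n,\bm{V}_n\}$ over $\Gamma_m$'': since $\Gamma_m$ is generally not a circle this orthogonality fails, and the rigorous route (which the paper's subsequent inequalities in fact follow, after an equally informal equality) is a triangle-inequality expansion bounded by $|\Gamma_m|\max_{x\in\Gamma_m}$ of the summed prefactors, which yields the same result up to constants.
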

	
\begin{proof}
\Cref{assumption1} implies that
\begin{align*}
	\left|\dfrac{\partial\bm{v}_N}{\partial\hat{x}}-\dfrac{\partial\bm{v}_N^\delta}
	{\partial\hat{x}}\right|=\left|{\partial_r(\bm{v}_N-\bm{v}_N^\delta)}\right|.
\end{align*}
Thus,
	\begin{align*}
		&\quad\left\|\dfrac{\partial \bm{v}_N}{\partial\hat{x}}-\dfrac{\partial\bm{v}_N^\delta}{\partial\hat{x}}\right\|^2_{(L^2(\Gamma_m))^2}\\
		&=\sum_{|n|\le N}\int_{\Gamma_m}\left|\kappa_{p, n}(r)\left(\hat{\phi}_{p, n}-\hat{\phi}_{p, n}^\delta\right)+
		\dfrac{\ii n}{r}\left(\alpha_{s, n}(r)
		-\dfrac{1}{r}\beta_{s, n}(r)\right)\left(\hat{\phi}_{s, n}-\hat{\phi}_{s, n}^\delta\right)\right|^2\\
		&\quad+\left|\dfrac{\ii n}{r}\left(\alpha_{p, n}(r)
		-\dfrac{1}{r}\beta_{p, n}(r)\right)\left(\hat{\phi}_{p, n}-\hat{\phi}_{p, n}^\delta\right)-\kappa_{s, n}(r)\left(\hat{\phi}_{s, n}-\hat{\phi}_{s, n}^\delta\right)\right|^2\mathrm{d}s(x)\\
		& \le 3\sum_{|n|\le N}\int_{\Gamma_m}\left(\left|\kappa_{p, n}(r)\left(\hat{\phi}_{p, n}-\hat{\phi}_{p, n}^\delta\right)\right|^2+\left|\dfrac{\ii n}{r}\alpha_{s, n}(r)\left(\hat{\phi}_{s, n}-\hat{\phi}_{s, n}^\delta\right)\right|^2\right. \\
		&\quad+\left|\dfrac{\ii n}{r^2}\beta_{s, n}(r)\left(\hat{\phi}_{s, n}-\hat{\phi}_{s, n}^\delta\right)\right|^2
		   +\left|\dfrac{\ii n}{r^2}\beta_{p, n}(r)\left(\hat{\phi}_{p, n}-\hat{\phi}_{p, n}^\delta\right)\right|^2 \\
		 &\quad +\left.\left|\dfrac{\ii n}{r}\alpha_{p, n}(r)\left(\hat{\phi}_{p, n}-\hat{\phi}_{p, n}^\delta\right)\right|^2+\left|\kappa_{s, n}(r)\left(\hat{\phi}_{s, n}-\hat{\phi}_{s, n}^\delta\right)\right|^2\right)\mathrm{d}s(x) \\
		&\le 3\left|\Gamma_m\right|\sum_{|n|\le N}\max_{x\in\Gamma_m}\sum_{\xi\in\{p, s\}}\!\!\left(\left|\kappa_{\xi, n}(r)\right|^2+\dfrac{n^2}{r^4}\left|\beta_{\xi, n}(r)\right|^2+\dfrac{n^2}{r^2}\left|\alpha_{\xi, n}(r)\right|^2\right)\left|\hat{\phi}_{\xi, n}-\hat{\phi}_{\xi, n}^\delta\right|^2 
	\end{align*}
		
		It follows from \eqref{eq:alpha_est} and \eqref{eq:kappa_est} that for $\xi\in\{p, s\},$
		\begin{align*}
			\left|\alpha_{\xi, n}(r)\right|^2&\le 2\left(k_\xi^2+\frac{n^2}{r^2}\right)\le 2\left(k_s^2+\frac{n^2}{r^2}\right),\\\left|\kappa_{\xi,n}(r)\right|^2&\le\frac{8n^4}{r^4}+\frac{2k_\xi^2}{r^2}\le\frac{8n^4}{r^4}+\frac{2k_s^2}{r^2},
		\end{align*}
		which further gives
		\begin{align}\nonumber
			&\quad \left\|\dfrac{\partial \bm{v}_N}{\partial\hat{x}}-\dfrac{\partial\bm{v}_N^\delta}{\partial\hat{x}}\right\|^2
			_{\left(L^2(\Gamma_m)\right)^2}\\\nonumber
		&\le 3|\Gamma_m|\left(\sum_{|n|\le 4 k_s\rho}+\sum_{4k_s\rho+1\le |n|\le N}\right) \\
		& \quad \cdot\max_{x\in\Gamma_m}\sum_{\xi\in\{p, s\}}\!\!\left(\left|\kappa_{\xi, n}(r)\right|^2+\dfrac{n^2}{r^4}\left|\beta_{\xi, n}(r)\right|^2+\dfrac{n^2}{r^2}\left|\alpha_{\xi, n}(r)\right|^2\right)\left|\hat{\phi}_{\xi, n}-\hat{\phi}_{\xi, n}^\delta\right|^2 \notag\\
		& \le 3|\Gamma_m|\left(\sum_{|n|\le 4 k_s\rho}+\sum_{4k_s\rho+1\le|n|\le N}\right)\sum_{\xi\in\{p, s\}}\left|\hat{\phi}_{\xi, n}-\hat{\phi}_{\xi, n}^\delta\right|^2 \notag\\
		&\quad\cdot\left\{\frac{8n^4}{(R+\gamma)^4}+\frac{2k_s^2}{(R+\gamma)^2}+\frac{n^2}{(R+\gamma)^4}+\frac{2n^2k_s^2}{(R+\gamma)^2}+\frac{2n^4}{(R+\gamma)^4}\right\} \notag \\
		&=:3|\Gamma_m|(I_3+I_4).\label{eq:dv_error}
	\end{align}
	where 
	\begin{align*}
		I_3 & =\sum_{|n|\le 4 k_s\rho}
		\left\{\frac{n^2(10n^2+1)}{(R+\gamma)^4}+\frac{2k_s^2(n^2+1)}{(R+\gamma)^2}\right\}
		\sum_{\xi\in\{p, s\}}\left|\hat{\phi}_{\xi, n}-\hat{\phi}_{\xi, n}^\delta\right|^2,\\
		I_4 & =\sum_{4k_s\rho+1\le|n|\le N}
		\left\{\frac{n^2(10n^2+1)}{(R+\gamma)^4}+\frac{2k_s^2(n^2+1)}{(R+\gamma)^2}\right\}\sum_{\xi\in\{p, s\}}\left|\hat{\phi}_{\xi, n}-\hat{\phi}_{\xi, n}^\delta\right|^2.
	\end{align*}
	
	Recalling \eqref{eq:phierr}--\eqref{eq:psierr}, we find that 
	\begin{align}\label{eq:I3}
		I_3\le(8k_s\rho+1)\left\{\frac{2560k_s^4\rho^4+16k_s^2\rho^2}{(R+\gamma)^4}+\frac{32k_s^4\rho^2+2k_s^2}{(R+\gamma)^2}\right\}\frac{M_2^2\delta^2\|\bm{v}\|^2_{(L^2(\Gamma_\rho))^2}}{\pi\rho}.
	\end{align}
	Meanwhile, noticing \eqref{eq:phierrorpsi} and the fact that $R+\gamma<\rho$, we derive that
	\begin{align}\nonumber
		I_4 & \le \sum_{4k_s\rho+1\le|n|\le N}\left\{\frac{(10n^2+1)n^2}{(R+\gamma)^4}+\frac{2k_s^2(n^2+1)}{(R+\gamma)^2}\right\}\frac{25\delta^2\ee^{2k_sR}\|\bmv\|^2_{(L^2(\Gamma_\rho))^2}}{\pi\rho^3}c_3n^2\tau_1^{2n} \\
		& \le\sum_{4k_s\rho+1\le|n|\le N}\left\{\frac{(10n^2+1)n^2}{(R+\gamma)^4}+\frac{2n^2(n^2+1)}{(R+\gamma)^4}\right\}\frac{25\delta^2\ee^{2k_sR}\|\bmv\|^2_{(L^2(\Gamma_\rho))^2}}{\pi\rho^3}c_3n^2\tau_1^{2n}\nonumber\\
		&=\sum_{4k_s\rho+1\le|n|\le N}\frac{12n^4+3n^2}{(R+\gamma)^4}\frac{25\delta^2\ee^{2k_sR}\|\bm{v}\|^2_{(L^2(\Gamma_\rho))^2}}{\pi\rho^3}c_3n^2\tau_1^{2n}\nonumber \\
		&\le\frac{375 c_3\ee^{2k_sR}\|\bm{v}\|^2_{(L^2(\Gamma_\rho))^2}}{\pi\rho^3(R+\gamma)^4}\sum_{4k_s\rho+1\le|n|\le N}n^6\tau_1^{2n}\delta^2\nonumber \\
		&\le\frac{750 c_3 \ee^{2k_sR}\|\bm{v}\|^2_{L^2(\Gamma_\rho)^2}}{\pi(R+\gamma)^4\rho^3}N^7\tau_1^{2N}\delta^2.\label{eq:I4}
	\end{align}
	
	Finally, collecting \eqref{eq:dv_error}, \eqref{eq:I3} and \eqref{eq:I4}, we obtain
	\begin{equation*}
		\left\|\dfrac{\partial \bm{v}_N}{\partial\hat{x}}-\dfrac{\partial\bm{v}_N^\delta}{\partial\hat{x}}\right\|^2
		_{\left(L^2(\Gamma_m)\right)^2}\le C_9\delta^2+C_{10}N^7\tau_1^{2N}\delta^2,
	\end{equation*}
	where
	\begin{align*}
		C_9 & =3(8k_s\rho+1)|\Gamma_m|\left\{\frac{2560k_s^4\rho^4+16k_s^2\rho^2}{(R+\gamma)^4}+\frac{32k_s^4\rho^2+2k_s^2}{(R+\gamma)^2}\right\}\frac{M_2^2\|\bmv\|^2_{(L^2(\Gamma_\rho))^2}}{\pi\rho},\\
		C_{10}&=\frac{750c_3|\Gamma_m|\ee^{2k_sR}\|\bm{v}\|^2_{L^2(\Gamma_\rho)^2}}{\pi(R+\gamma)^4\rho^3}.
	\end{align*}
\end{proof}

Before carrying on the convergence analysis, we shall take $N$ to be 
\begin{align}\label{eq:N}
	N=\left[\dfrac{\ln \delta^{-1}}{\ln\tau_2}\right],
\end{align}
where $[X]$ is the largest integer that is smaller than $X+1$ , then it holds that
\begin{align}\label{eq:err_rho_fixN}
	\left\|\bmv-\bmv^\delta_N\right\|^2_{\left(L^2(\Gamma_\rho)\right)^2}&\le C_1\delta^{2\left|\frac{\ln\tau_1}{\ln\tau_2}\right|}+C_2\tau_1^4\left|\frac{\ln\delta}{\ln\tau_2}\right|^4\delta^{2\left|\frac{\ln\tau_1}{\ln\tau_2}\right|}+C_3\delta^2+C_4\delta^2\frac{|\ln\delta|^4}{|\ln\tau_2|^4},\\\label{eq:error_v_fixN}
	\left\|\bmv-\bmv^\delta_N\right\|^2_{\left(L^2(\Gamma_m)\right)^2}&\le C_5\tau_2^4\dfrac{|\ln\delta|^2}{|\ln\tau_2|^2}\delta^2
	+C_6\delta^2+C_7\dfrac{|\ln\delta|^5}{|{\ln\tau_2}|^5}\delta^{2+2\left|\frac{\ln\tau_2}{\ln\tau_1}\right|},\\
	\label{eq:error_Deriv_fixN}\left\|\dfrac{\partial\bmv}{\partial\hat{x}}
	-\dfrac{\partial\bm{v}_N^\delta}{\partial\hat{x}}\right\|^2_{\left(L^2(\Gamma_m)\right)^2}&\le 
	C_8\delta^2\dfrac{|\ln\delta|^4}{|\ln\tau_2|^4}+
	C_9\delta^2+
	C_{10}\dfrac{|\ln\delta|^7}{|\ln\tau_2|^7}\delta^{2+2\left|\frac{\ln\tau_2}{\ln\tau_1}\right|}.
\end{align}

From \eqref{eq:err_rho_fixN}--\eqref{eq:error_Deriv_fixN}, we can easily see that
\begin{align}
	\left\|\bmv-\bmv^\delta_N\right\|_{\left(L^2(\Gamma_\rho)\right)^2} & \le\xi_1(\delta), \notag\\
	\label{eq:error_v_fixN1} \left\|\bmv-\bmv^\delta_N\right\|_{\left(L^2(\Gamma_m)\right)^2}&\le\xi_2(\delta),\\
	\left\|\dfrac{\partial\bmv}{\partial\hat{x}}
	-\dfrac{\partial\bm{v}_N^\delta}{\partial\hat{x}}\right\|_{\left(L^2(\Gamma_m)\right)^2}&\le \xi_3(\delta),\label{eq:error_Deriv_fixN1}
\end{align} 
with $\xi_\ell(\delta)\to 0,\,(\ell=1,2,3),$ as $\delta\to0.$

We first consider the Newton-type method where the noise-free data $\bm{v}$ is available to present the convergence results. Further, we shall establish convergence results of the method \eqref{eq:Newton}.\\

\noindent\textbf{Noise-free data}
As the first analysis, we assume that the given scattered field data $\bmv$ is noise-free. Let $p_m$ be the radial parameterization of the current approximation $\Gamma_m$ and $p$ be the exact boundary parameterization of the inverse problem. To consider the convergence, we assume that the solution $\bm{u}$ can be analytically extended to $\Gamma_m.$ Here, this analytical extension may be possibly through the interior of $D$. Within this extension, we can represent the exact scattered field $\bm{v}$ in the form of \eqref{eq:HD}, with $\phi_\xi,\,\xi\in\{p, s\}$ in \eqref{eq:HD} given in form of \eqref{eq:phi_psi_expansion} and the Fourier coefficients $\hat{\phi}_{\xi, n},\,\xi\in\{p, s\}$ being the solution to \eqref{eq:phipsiEq}.

Using the Newton-type method for the Dirichlet case, the step length $h$ at $m$-th step is determined by the linearized equation 
\[
\left.\Big(\bmu+\nabla\bmu\cdot h\Big)\right|_{\Gamma_m}=0.
\]
Within \Cref{assumption1} where $h(\hat{x})=h\hat{x},$ the linearized equation can be reduced to
\begin{align}\label{eq:linearized}
	\left.\left(\bmu+\dfrac{\partial\bmu}{\partial\hat{x}}h\right)\right|_{\Gamma_m}=0.
\end{align}
Under \Cref{assumption2}, the linearized equation \eqref{eq:linearized} can be solved pointwise by
\begin{align}\label{eq:h1}
	h=-\left.\left(\bmu\bigg/\dfrac{\partial\bmu}{\partial\hat{x}}\right)\right|_{\Gamma_m}.
\end{align}
The fact that $h$ is real-valued while the right-hand side may be complex-value motivates us to modify \eqref{eq:h1} by 
\begin{equation*}
	h=-\Re\left.\left(\bm{u}\bigg/\dfrac{\partial\bmu}{\partial\hat{x}}\right)\right|_{\Gamma_m}.
\end{equation*}
Meanwhile, the assumption \Cref{assumption2} must be replaced by
\begin{align*}
	\left|\Re\left(\left.\dfrac{\partial \bmu}{\partial \hat{x}}\right|_\Gamma\right)\right|>4\varepsilon.
\end{align*}
In addition, the update in \eqref{eq:Newton} is obtained by the following iterative scheme
\begin{align}\label{eq:iterscheme}
	p_{m+1}=p_m-\Re\left.\left(\bmu\bigg/\dfrac{\partial\bmu}{\partial\hat{x}}\right)\right|_{\Gamma_m},
\end{align}
where $\bmu$ is the exact solution of the direct scattering problem.

Combining the Dirichlet boundary condition in \eqref{eq:boundaryvalueproblem} and the Taylor's formula 
\begin{align*}
	\left|\bmu(p_m(\hat{x})\hat{x})-\bmu(p(\hat{x})\hat{x})-\dfrac{\partial\bmu(p_m(\hat{x})\hat{x}
		)}{\partial \hat{x}}(p_m(\hat{x})-p(\hat{x}))\right|=\mathcal{O}\big(|p_m(\hat{x})-p(\hat{x})|^2\big)
\end{align*}
gives
\begin{align*}
	\left\|\bmu|_{\Gamma_m}-\left.\dfrac{\partial\bmu}{\partial\hat{x}}\right
	|_{\Gamma_m}(p_m-p)\right\|_\infty=\mathcal{O}(\left\|p_m(\hat{x})-p(\hat{x})\right\|^2).
\end{align*}
Further, we obtain 
\begin{align*}
	p_{m+1}-p&=p_m-\left.\Re\left(\bmu\Bigg/\dfrac{\partial\bmu}{\partial\hat{x}}\right)\right|_{\Gamma_m}-p\\
	&=\Re\left[\left(\left.\left(p_m-p\right)\dfrac{\partial\bmu}{\partial\hat{x}}\right|_{\Gamma_m}
	-\bmu|_{\Gamma_m}\right)
	\Bigg/\left(\left.\dfrac{\partial\bmu}{\partial\hat{x}}\right|_{\Gamma_m}\right)\right],
\end{align*}
which implies that there exists some constant $C>0$ such that 
\[
\|p_{m+1}-p\|\le\dfrac{C}{4\varepsilon}\|p_m-p\|^2.
\]

\noindent\textbf{Noisy data}
In this part, we consider the Newton-type method by using the approximate scattered field $\bm{v}_N^\delta$.
In such a scenario, \eqref{eq:iterscheme} needs to be modified by
\begin{align}\label{eq:iterscheme_delta}
	p_{m+1}^\delta=p_m^\delta-\Re\left.\left(\bm{u}_N^\delta\bigg/\dfrac{\partial\bmu_N^\delta}{\partial\hat{x}}\right)\right|_{\Gamma_m^\delta},
\end{align}
where $\bmu^\delta_N=\bmu^i+\bmv^\delta_N,$ and we denote $p_m^\delta$ and $\Gamma_m^\delta$ to indicate the dependence on the noise.

We are now in a position to head for the convergence results.
Noticing  \eqref{eq:lowboundepsilon}, \eqref{eq:error_v_fixN},  and \eqref{eq:error_Deriv_fixN},  we derive that there exist a sufficiently small closed neighborhood $U$ of $\Gamma$ and a sufficiently small $\delta$ such that 
\begin{align}\label{eq:derivBound}
	\left|\left.\dfrac{\partial\bm{u}_N^\delta}{\partial\hat{x}}\right|_{\Gamma}\right|\ge 2\varepsilon,\quad \Gamma\in U.
\end{align}

To establish the convergence of the method, a stopping rule is supposed to be imposed to the Newton-type iterative scheme:

	Given the measured noisy data $\bmv^\delta$ by \eqref{eq:noisyData}, we stop the iterative process \eqref{eq:Newton} if two successive approximations satisfy
	\begin{equation}\label{eq:stop}
		\|p_{m+1}^\delta-p_m^\delta\|_{L^2(\mathbb{S})}\le C_1(\delta),
	\end{equation}
	where $C_1=\dfrac{4\xi_2(\delta)}{\varepsilon- 2\xi_3(\delta)}.$

\begin{remark}
	We would like to point out that, $C_1(\delta)\to0$ as $\delta\to0,$ which means that the stopping rule gets more strict as $\delta$ decreases.
\end{remark}

For convergence analysis, we define $p_\delta:=p_{m+1}^\delta$ when the stopping criterion is fulfilled.
\begin{theorem}[Convergence Result]
	Let $\Gamma$ be analytic and assume that \eqref{eq:derivBound} holds. Then the iterative scheme \eqref{eq:Newton} is locally convergent in the sense that
	\[
	\|p_\delta-p\|\to0,\quad \text{as }\delta\to0.
	\]
\end{theorem}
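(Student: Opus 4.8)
The plan is to track the distance $\|p_m^\delta-p\|$ at the terminating index and show it is bounded by a quantity that vanishes with $\delta$. Write $\partial_{\hat x}$ for the radial derivative $\partial/\partial\hat x$, and note that since $\bm u=\bm u^i+\bm v$ and $\bm u_N^\delta=\bm u^i+\bm v_N^\delta$ the field error coincides with the scattered-field error, so \eqref{eq:error_v_fixN1} and \eqref{eq:error_Deriv_fixN1} give $\|\bm u-\bm u_N^\delta\|_{(L^2(\Gamma_m))^2}\le\xi_2(\delta)$ and $\|\partial_{\hat x}(\bm u-\bm u_N^\delta)\|_{(L^2(\Gamma_m))^2}\le\xi_3(\delta)$ on every iterative curve. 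The analyticity of $\Gamma$ is what licenses the Fourier--Bessel representation, and hence these estimates, on the curves $\Gamma_m^\delta$, which may protrude into $D$. I work throughout in the neighborhood $U$ of \Cref{assumption2} and \eqref{eq:derivBound}, on which $|\partial_{\hat x}\bm u|\ge 4\varepsilon$ and $|\partial_{\hat x}\bm u_N^\delta|\ge 2\varepsilon$.

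First I would reproduce the Taylor argument of the noise-free analysis, but about the noisy iterate: expanding the exact field about $p_m^\delta\hat x$ and invoking the Dirichlet condition $\bm u|_{\partial D}=0$ from \eqref{eq:boundaryvalueproblem} gives
\[
  p_m^\delta - p = \Re\!\left(\bm u\big/\partial_{\hat x}\bm u\right)\big|_{\Gamma_m^\delta} + \mathcal O\!\left(\|p_m^\delta-p\|^2\right),
\]
where the quadratic remainder is divided by $|\partial_{\hat x}\bm u|\ge 4\varepsilon$. Next I would split the leading ratio according to $\bm u=\bm u_N^\delta+(\bm u-\bm u_N^\delta)$ in both numerator and denominator,
\[
  \Re\!\left(\bm u\big/\partial_{\hat x}\bm u\right)\big|_{\Gamma_m^\delta}
  = \underbrace{\Re\!\left(\bm u_N^\delta\big/\partial_{\hat x}\bm u_N^\delta\right)\big|_{\Gamma_m^\delta}}_{=\,-(p_{m+1}^\delta-p_m^\delta)}
  + \Re\!\left(\frac{\bm u}{\partial_{\hat x}\bm u}-\frac{\bm u_N^\delta}{\partial_{\hat x}\bm u_N^\delta}\right)\!\bigg|_{\Gamma_m^\delta}.
\]
The first term is exactly the negated Newton step of \eqref{eq:iterscheme_delta}, so at the terminating index the stopping rule \eqref{eq:stop} bounds its $L^2(\mathbb S)$ norm by $C_1(\delta)$. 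For the second term I would place the two ratios over a common denominator, yielding $\frac{\bm u-\bm u_N^\delta}{\partial_{\hat x}\bm u}-\frac{\bm u_N^\delta\,\partial_{\hat x}(\bm u-\bm u_N^\delta)}{(\partial_{\hat x}\bm u)(\partial_{\hat x}\bm u_N^\delta)}$, and estimate it by the two lower bounds together with $\|\bm u_N^\delta\|_{(L^2(\Gamma_m))^2}\lesssim\|p_m^\delta-p\|+\xi_2(\delta)$ (itself a consequence of the Taylor identity above); this produces a bound of the form $\tfrac{1}{4\varepsilon}\xi_2(\delta)+\tfrac{c}{\varepsilon^2}\big(\|p_m^\delta-p\|+\xi_2(\delta)\big)\xi_3(\delta)$.

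Collecting the pieces produces, at the terminating index, the scalar inequality
\[
  \|p_m^\delta-p\| \le C_1(\delta) + \frac{\xi_2(\delta)}{4\varepsilon} + \frac{c}{\varepsilon^2}\big(\|p_m^\delta-p\|+\xi_2(\delta)\big)\xi_3(\delta) + \frac{C}{4\varepsilon}\|p_m^\delta-p\|^2 .
\]
Since $\xi_2(\delta),\xi_3(\delta)\to0$ as $\delta\to0$, for $\delta$ small the coefficient $\tfrac{c}{\varepsilon^2}\xi_3(\delta)$ falls below $\tfrac12$ and, provided the iterate remains in $U$ so that $\|p_m^\delta-p\|$ is a priori small, the quadratic term is absorbed into the left-hand side, giving $\|p_m^\delta-p\|\lesssim C_1(\delta)+\xi_2(\delta)/\varepsilon$. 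The conclusion then follows from
\[
  \|p_\delta-p\| = \|p_{m+1}^\delta-p\| \le \|p_{m+1}^\delta-p_m^\delta\| + \|p_m^\delta-p\| \le C_1(\delta) + \|p_m^\delta-p\|,
\]
combined with $C_1(\delta)=4\xi_2(\delta)/(\varepsilon-2\xi_3(\delta))\to0$ and $\xi_2(\delta)\to0$. The main obstacle I anticipate is the absorption step: one must guarantee a priori that every iterate up to termination stays inside $U$, so that both derivative lower bounds remain valid and the $\mathcal O(\|p_m^\delta-p\|^2)$ remainder can be dominated. This is precisely the local character of the convergence, and it rests on the perturbed iteration still contracting once $\delta$ is small and the initial guess is sufficiently close to $p$.
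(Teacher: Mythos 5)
Your proof is correct in substance but reaches the conclusion by a genuinely different endgame than the paper. The paper first derives the recursive error inequality \eqref{eq:En}, $\|p_{m+1}^\delta-p\|\le\frac{1}{\varepsilon}\bigl(c\|p_m^\delta-p\|^2+\xi_3(\delta)\|p_m^\delta-p\|+\xi_4(\delta)\bigr)$, via the numerator decomposition $(p_m^\delta-p)\,\partial_{\hat x}\bm{u}^\delta-\bm{u}_N^\delta+(p_m^\delta-p)\bigl(\partial_{\hat x}\bm{u}_N^\delta-\partial_{\hat x}\bm{u}^\delta\bigr)$ over the single denominator $\partial_{\hat x}\bm{u}_N^\delta$; it then shows a halving contraction on the annulus \eqref{eq:require} and, crucially, the step-size lower bound $\|p_{m+1}^\delta-p_m^\delta\|\ge\frac12\|p_m^\delta-p\|$, which is what converts the stopping rule \eqref{eq:stop} into the error bound $\|p_\delta-p\|\le C_1(\delta)$. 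You instead bypass the recursion and the contraction entirely: you write an exact identity at the terminating index, expressing $p_m^\delta-p$ as the negated Newton step of \eqref{eq:iterscheme_delta} plus the ratio perturbation $\Re\bigl(\bm{u}/\partial_{\hat x}\bm{u}-\bm{u}_N^\delta/\partial_{\hat x}\bm{u}_N^\delta\bigr)$ put over a common denominator, bound the step by $C_1(\delta)$ and the perturbation via \eqref{eq:error_v_fixN1}, \eqref{eq:error_Deriv_fixN1}, \eqref{eq:derivBound}, and absorb the quadratic Taylor remainder. The ingredients (Taylor plus Dirichlet condition, the $\xi_2,\xi_3$ estimates, the derivative lower bounds, the stopping rule) are the same, and your proof is at the same level of rigor as the paper's --- including the shared informality of dividing vector fields and of mixing $L^2$ and pointwise norms in the product terms. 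What each route buys: yours is shorter and works only at the terminating index, but it must \emph{assume} that the terminating iterate still lies in $U$ with a priori small error so that the quadratic term can be absorbed --- you flag this honestly, and it is exactly the point where the paper's argument is more self-contained, since the halving inequality \eqref{eq:2} propagates smallness along the iteration and simultaneously shows the stopping criterion cannot fire while $\|p_m^\delta-p\|$ is still large. If you added the paper's contraction step (or any equivalent guarantee that iterates remain in the annulus until termination), your direct identity would yield the same conclusion $\|p_\delta-p\|\lesssim C_1(\delta)+\xi_2(\delta)/\varepsilon\to0$ with slightly cleaner bookkeeping.
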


\begin{proof}
	We start this proof by noticing that 
	\begin{align*}
		&\quad p_{m+1}^\delta-p \\
		& =p_m^\delta-\Re\left(\bm{u}_N^\delta\left.\Bigg/\dfrac{\partial\bmu_N^\delta}
		{\partial\hat{x}}\right)\right|_{\Gamma_m^\delta}-p \\
		& =\Re\left[\left(-\bm{u}_N^\delta|_{\Gamma_m^\delta}+(p_m^\delta-p)\left.\dfrac{\partial\bmu_N^\delta}
		{\partial\hat{x}}\right|_{\Gamma_m^\delta}\right)\Big/\left(\left.\dfrac{\partial\bm{u}_N^\delta}
		{\partial\hat{x}}\right|_{\Gamma_m^\delta}\right)\right] \\
		& = \Re\left\{\left[(p_m^\delta-p)\left.\dfrac{\partial\bmu^\delta}{\partial\hat{x}}\right|_{\Gamma_m^\delta}
		\left.-\bmu_N^\delta\right|_{\Gamma_m^\delta}\right.\right.\left.\left.+(p_m^\delta-p)\left.\left(
		\dfrac{\partial\bm{u}_N^\delta}{\partial\hat{x}}-\dfrac{\partial\bmu^\delta}{\partial\hat{x}}\right)\right|_{\Gamma_m^\delta
		}\right]\Big/\left(\left.\dfrac{\partial\bm{u}_N^\delta}{\partial\hat{x}}\right|_{\Gamma_m^\delta}\right)\right\}.
	\end{align*}
	Here, for $\Gamma_m^\delta\in U,$ it holds that
	\begin{align*}
		\left|\left.(p_m^\delta-p)\dfrac{\partial \bmu^\delta}{\partial\hat{x}}\right.-\left.\bmu_N^\delta\right.\right|&\le
		\left|(p_m^\delta-p)\dfrac{\partial \bmu^\delta}{\partial\hat{x}}-\left.\bmu^\delta\right.\right|+\left|\left.\bmu^\delta\right.\left.-\bm{u}_N^\delta\right.\right|\\
		&\le\left|(p_m^\delta-p)\dfrac{\partial \bmu^\delta}{\partial\hat{x}}-\bmu^\delta\right|+\left|\bmu-\bmu^\delta\right|+\left|\bmu-\bmu^\delta_N\right|,
	\end{align*}
	which further gives
	\begin{equation*}
		\left\|\left.(p_m^\delta-p)\dfrac{\partial \bmu^\delta}{\partial\hat{x}}\right.-\left.\bm{u}_N^\delta\right.\right\|
	 \le c\|p_m^\delta-p\|^2+\delta\|u\|+\xi_2(\delta)\le c\|p_m^\delta-p\|^2+\xi_4(\delta),
	\end{equation*}
	where $c>0$ is a constant depending on $\bmu^\delta,$ $\xi_4(\delta)=\delta\|u\|+\xi_2(\delta)$ and $\xi_4(\delta)\to0$ as $\delta\to0.$
	Combining the above fact with \eqref{eq:error_v_fixN1}, \eqref{eq:error_Deriv_fixN1}, \eqref{eq:derivBound} and Taylor's formula gives the following result
	\begin{align}\label{eq:En}
		\|p_{m+1}^\delta-p\|\le\dfrac{1}{\varepsilon}\left(c\|p_m^\delta-p\|^2+\xi_3(\delta)\|p_m^\delta-p\|+\xi_4(\delta)\right).
	\end{align}
	
	In fact, \eqref{eq:En} establishes the error relation between the $(m+1)$-th iteration and the $m$-th iteration. To ensure the convergence, it suffices to impose a condition such that the following inequality holds:
	\begin{align}\label{eq:inequality}
		\dfrac{1}{\varepsilon}\left(c\|p_m^\delta-p\|^2+\xi_3(\delta)\|p_m^\delta-p\|+\xi_4(\delta
		)\right)\le\frac{1}{2}\|p_{m}^\delta-p\|.
	\end{align}
	From the solution formula for quadratic equations, we can see that to ensure \eqref{eq:inequality}, $\|p_m^\delta-p\|$ is supposed to satisfy:
	\begin{align*}
		\frac{\beta-\sqrt{\beta^2-4c\xi_4(\delta)}}{2c}<\|p_m^\delta-p\|<\frac{\beta+\sqrt{\beta^2-4c\xi_4(\delta)}}{2c},
	\end{align*}
	where $\beta:=\varepsilon/2-\xi_3(\delta)$. From the Talyor's expansion of the square root function around $\beta,$ we know that
	for a sufficiently small fixed $\delta,$ if
	\begin{align}\label{eq:require}
		C_1(\delta)=\dfrac{4\xi_4(\delta)}{\varepsilon- 2\xi_3(\delta)}<\|p_m^\delta-p\|\le\dfrac{\varepsilon- 2\xi_3(\delta)}{2C},
	\end{align}
	it holds that 
	\begin{align}\label{eq:2}
		\|p_{m+1}^\delta-p\|\le\dfrac{1}{\varepsilon}\left(c\|p_m^\delta-p\|^2+\xi_3(\delta)\|p_m^\delta-p\|+\xi_3(\delta
		)\right)\le\dfrac{\|p_m-p\|}{2}.
	\end{align}
	In other words, for a sufficiently small $\delta,$ once $p_m^\delta$ satisfies \eqref{eq:require}, then \eqref{eq:2} holds. In addition, under \eqref{eq:require}, it holds that
	\begin{equation*}
		\|p_{m+1}^\delta-p_m^\delta\|=\|p_{m+1}^\delta-p-(p_m^\delta-p)\|\ge\|p_{m+1}^\delta-p\|-\|p_m^\delta-p\|\ge\dfrac{1}{2}\|p_m^\delta-p\|.
	\end{equation*}
	Further, once $p_m^\delta$ satisfies the stopping rule \eqref{eq:stop}, then 
	\begin{align*}
		\|p_{m+1}^\delta-p\|\le\dfrac{1}{2}\|p_m^\delta-p\|\le\|p_{m+1}^\delta-p_m^\delta\|\le C_1(\delta).
	\end{align*}
	By definition of $p_\delta,$ we have that
	\begin{align*}
		\|p_\delta-p\|\le C_1(\delta)\to0
	\end{align*}
	as $\delta\to0,$ which completes the proof.
\end{proof}
	
	
\section{Numerical examples}\label{sec:example}

In this section, we shall provide several numerical examples to verify the performance of the Newton-type method proposed in the former sections. Let $\gamma_m,$ parameterized by $p_m,$ be the current approximation to the exact boundary curve $\Gamma,$ that is, 
\begin{equation*}
	p_m(t)=\{r_m(t)(\cos t,\sin t):t\in[0,2\pi]\},
\end{equation*}
where $r_m:\mathbb{S}\to\mathbb{R}_+$ is the radial function. Here, we choose the radial functions $r_m$ to be the linear combinations of trigonometric polynomials of degree less than or equal to $N_p,$ that is,
\begin{equation}\label{eq:radial}
	r_m(t)=a_0+\sum_{j=1}^{N_p}(a_j^m\cos j t+b_j^m\sin j t),
\end{equation}
with $a_j^m, b_j^m\in\mathbb{R},\,j=1,\cdots, N_p, m=1,2,\cdots.$

Given the current approximation $r_m,\,m=1,2,\cdots,$ of the boundary $\partial D,$ the  approximation $r_{m+1}$ at the $(m+1)$-th step is updated through 
\[
r_{m+1}=r_m+\Delta r_m,
\]
where $\Delta r_m$ is the update step length in the form of \eqref{eq:radial} determined by \eqref{eq:iterscheme_delta} with  Fourier coefficients $\Delta c_m=(\Delta a_0^m, \Delta a_1^m,\cdots,\Delta a_{N_p}^m,\Delta b_1^m,\cdots,\Delta b_{N_p}^m),j=1,\cdots, N_p,\,m=1,2,\cdots$. In practical computation, once the initial guess is given, to solve \eqref{eq:iterscheme_delta} is equivalent to seeking for $\Delta c_m$ that is determined by the first line in \eqref{eq:Newton}. Explicitly, the Fourier coefficients $\Delta c_m$ are supposed to satisfy
\begin{align*}
	\bm{u}_N^\delta(Bc_m\hat{x})+(\nabla^\top\bm{u}_N^\delta(Bc_m\hat{x})\hat{x})B\Delta c_m=0,
\end{align*}
where $\hat{x}(t)=(\cos t,\sin t)^\top,$ $B(t)=(1,\cos t,\cdots, \cos N_r t,\sin t,\cdots,\sin N_r t)$, and then the update $c_{m+1}=c_m+\Delta c_m$ is obtained.

To terminate the iteration, we choose some error tolerance $\epsilon>0$ in advance and compute the relative error 
\[
e_M=\dfrac{\|\Delta r_m\|}{\|r_m\|}.
\]
Once $e_M<\epsilon,$ we stop the iteration and the current approximation $r_m$ is served as the final reconstruction of $\partial D.$

For the numerical examples in this section, the synthetic scattered fields are generated by solving the forward elastic scattering problem \eqref{eq:boundaryvalueproblem} via boundary integral equation method \cite{Dong1}. The synthetic scattered field data are numerically generated with a numerical quadrature rule with 64 points equidistantly distributed on $[0,2\pi].$ The $128$ receivers $x_i\in\Gamma_\rho$ are chosen to be $x_i=3(\cos\theta_i,\sin\theta_i)$ with $\theta_i\in[0,2\pi]$ being the observation angle. The 20 point sources are equally distributed on the measurement circle, and the sources are marked by red small points in all the figures.

To test the stability of the proposed method, we add random noise to the measured data. The noisy scattered field data are given according to the following formula:
\begin{align*}
	\bm{v}^\epsilon=\bm{v}+\delta r_1|\bm{v}|\mathrm{e}^{\mathrm{i}\pi r_2},
\end{align*}
where $r_1,\,r_2$ are two uniformly distributed random numbers ranging from $-1$ to $1,$ and $\delta>0$ is the noise level. In all the numerical examples, $\delta$ is chosen to be $5\%.$ To stop the Newton method, $\epsilon$ is selected to be $10^{-4}$. The radial functions $r_m$ are given in the form of \eqref{eq:radial} with $N_p$ chosen to be $8$ without loss of generality.

In the following examples, we take the Lam\'{e} constants to be $\lambda=\mu=1,$ thus the wavenumbers are $k_p=\omega/\sqrt{3}$ and $k_s=\omega.$ The polarization is taken to be ${\bm p}=(\frac{\sqrt{2}}{2},\frac{\sqrt{2}}{2}).$ According to \eqref{eq:N}, we used the following rule for choosing $N:$
\begin{align*}
	N:=2\left[|\ln\delta|\right]+1.
\end{align*}

In the following figures exhibiting the reconstructions, the exact boundary curves are displayed by the red solid lines, the reconstructions are indicated by the black dash-dotted lines $\cdot-,$ the green dashed lines mark the initial guess and the receivers are located on the blue dashed lines. To visualize the aperture of measurement, we additionally plot the auxiliary curve by black dashed lines to show the data aperture.

\begin{example}
	In the first example, we consider the reconstruction of the kite-shaped obstacle, whose boundary is parameterized through
	$$
	x(t)=(\cos t+0.65\cos2t-0.65,1.5\sin t),\quad 0\le t\le 2\pi.
	$$
	
	In \Cref{fig:kite}, we exhibit the reconstruction of the obstacle subject to $\omega=5$  under different observation apertures. We display the geometry setup in \Cref{fig:kite}(a). In \Cref{fig:kite}(b), we show the reconstruction by using the full aperture data. We can easily see from \Cref{fig:kite}(b) that the kite-shaped obstacle is well-reconstructed when the observation points are located on the full aperture. Next, we consider the limited-aperture case in \Cref{fig:kite}(c)--\Cref{fig:kite}(i) by taking different observation angles or placing the observation points in different positions. It can be observed that when the observation angles cover the concave part of the kite-shaped obstacle, the obstacle is well-reconstructed and the reconstruction is acceptable even though the observation points distribute only on $1/4$-aperture angle! For the limited observation where the concave part of the obstacle is partially located in the `shadow region', the method fails to reconstruct the concave part which is illuminated inadequately. Nevertheless, the concave part located in the illuminated domain as well as the convex part can be well-reconstructed.
\begin{figure}
	\centering
	\subfigure[]{\includegraphics[width=0.32\textwidth]{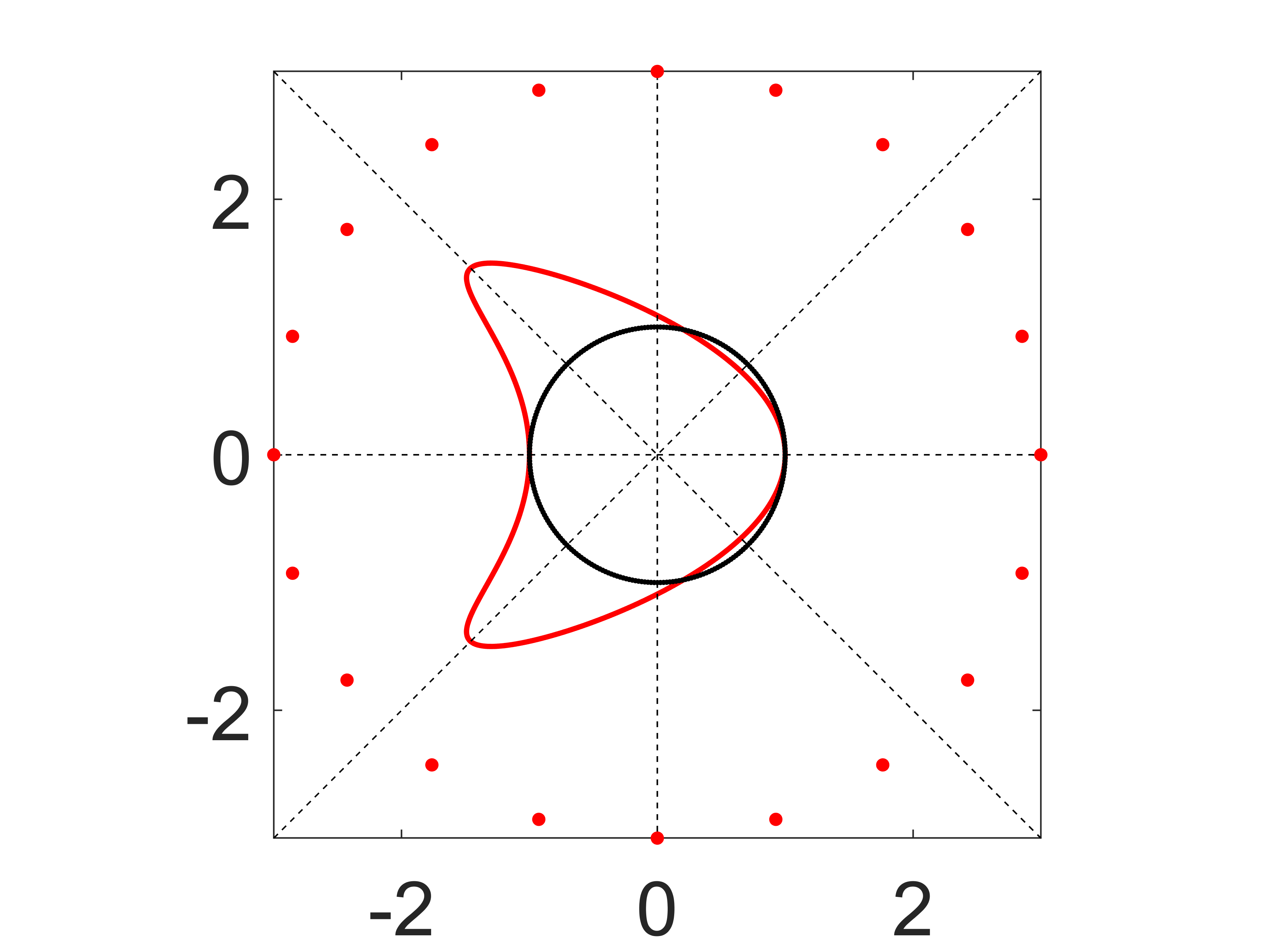}}
	\subfigure[]{\includegraphics[width=0.32\textwidth]{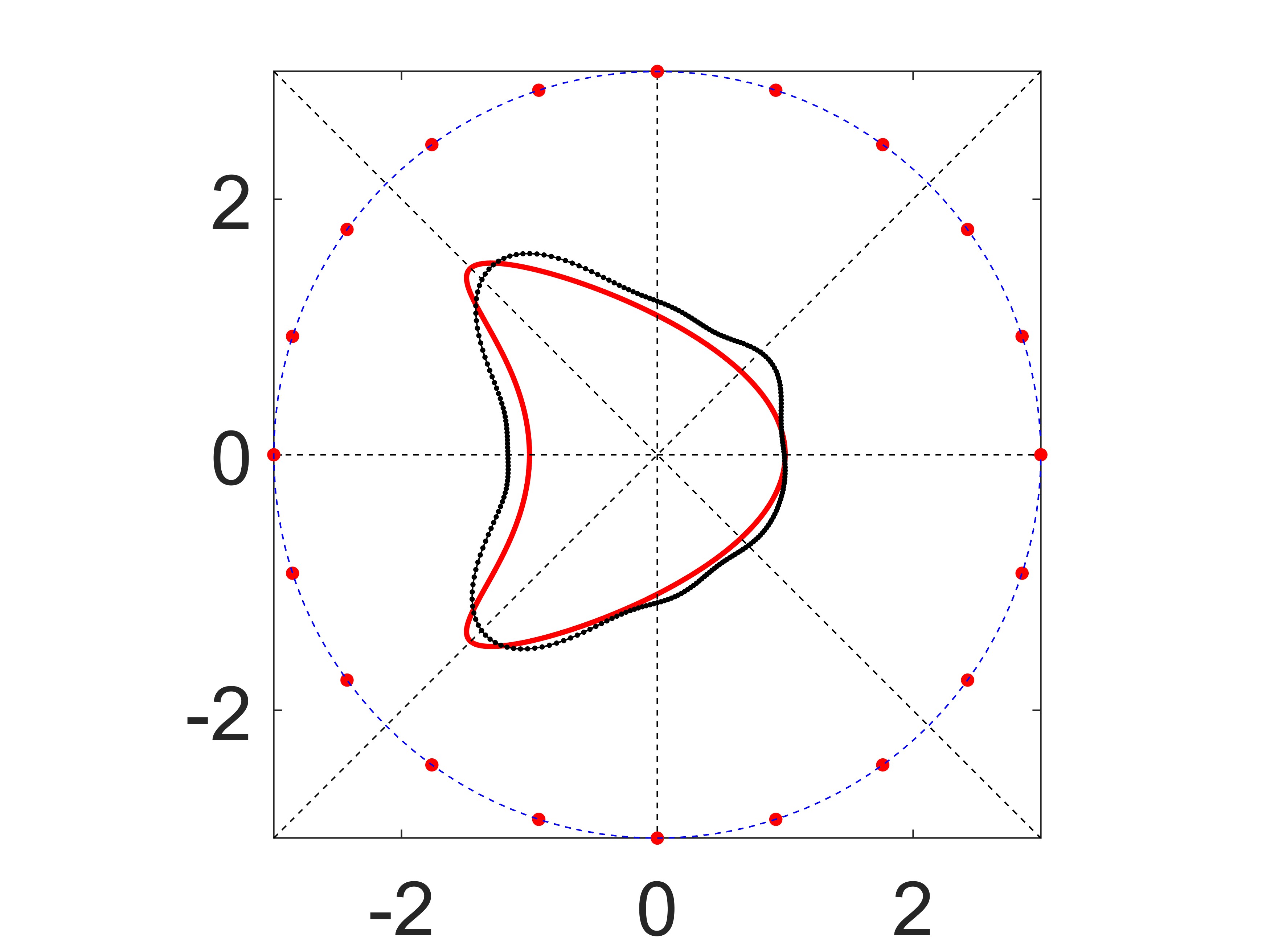}}
	\subfigure[]{\includegraphics[width=0.32\textwidth]{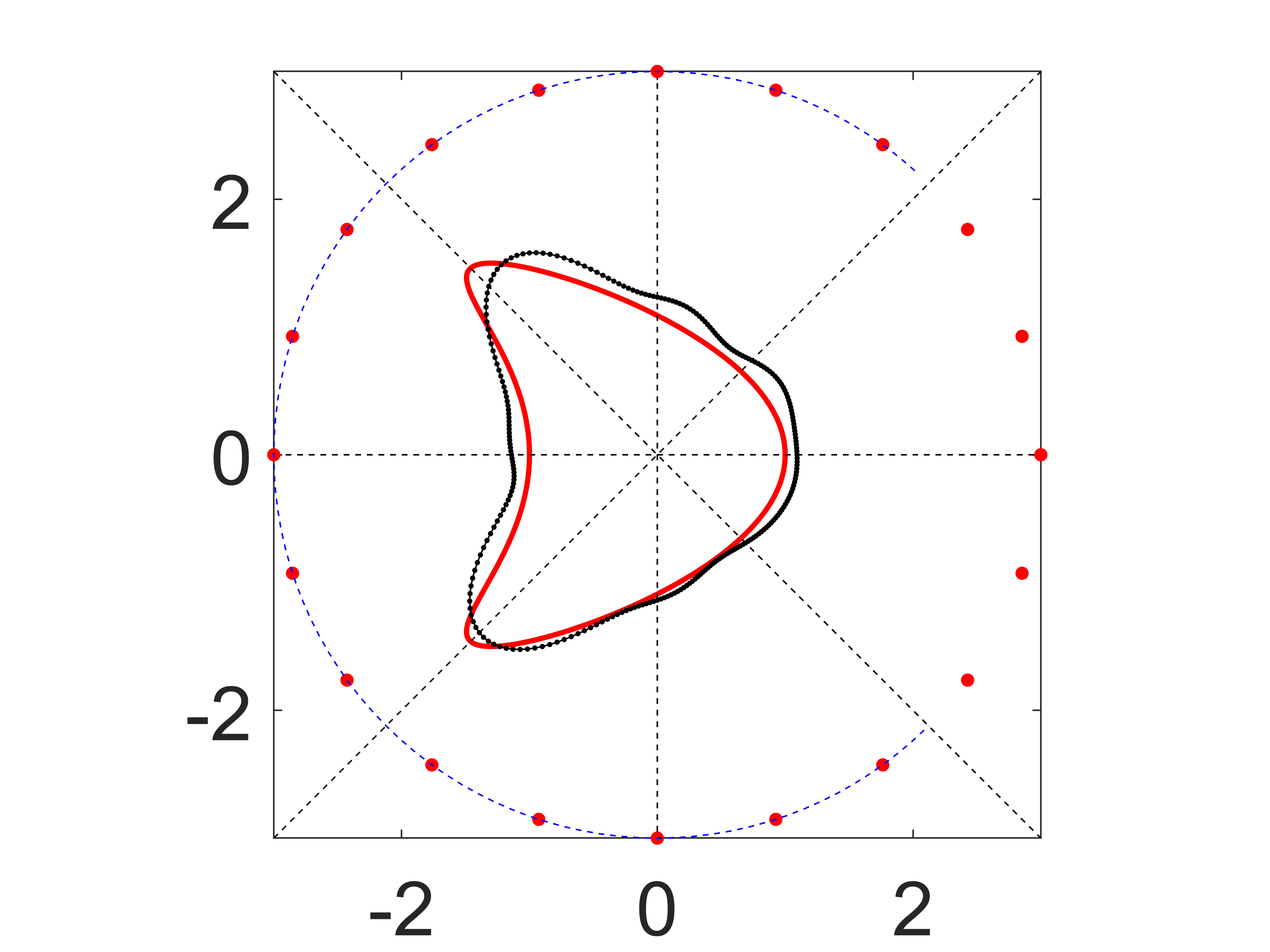}}
	\subfigure[]{\includegraphics[width=0.32\textwidth]{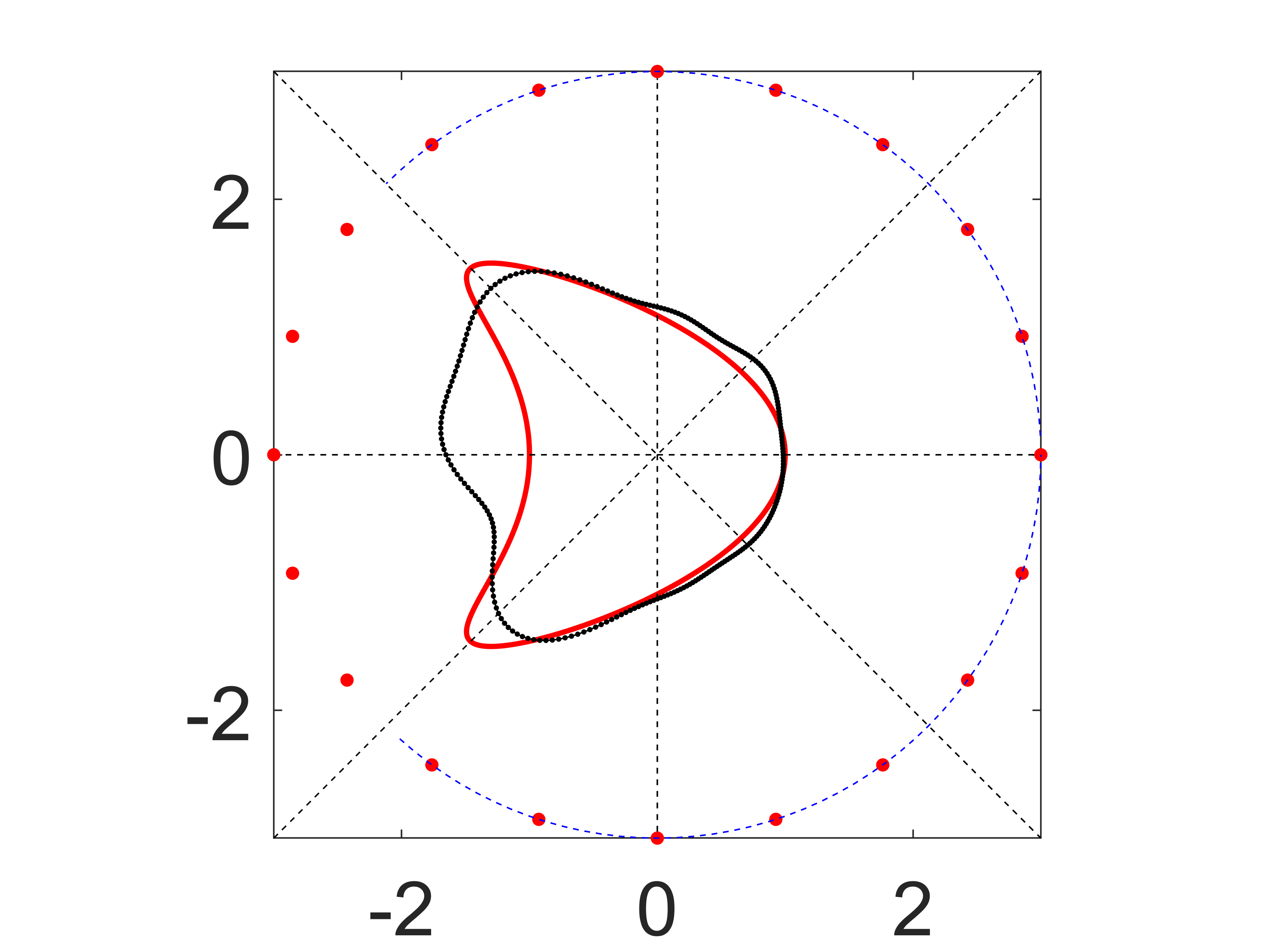}}
	\subfigure[]{\includegraphics[width=0.32\textwidth]{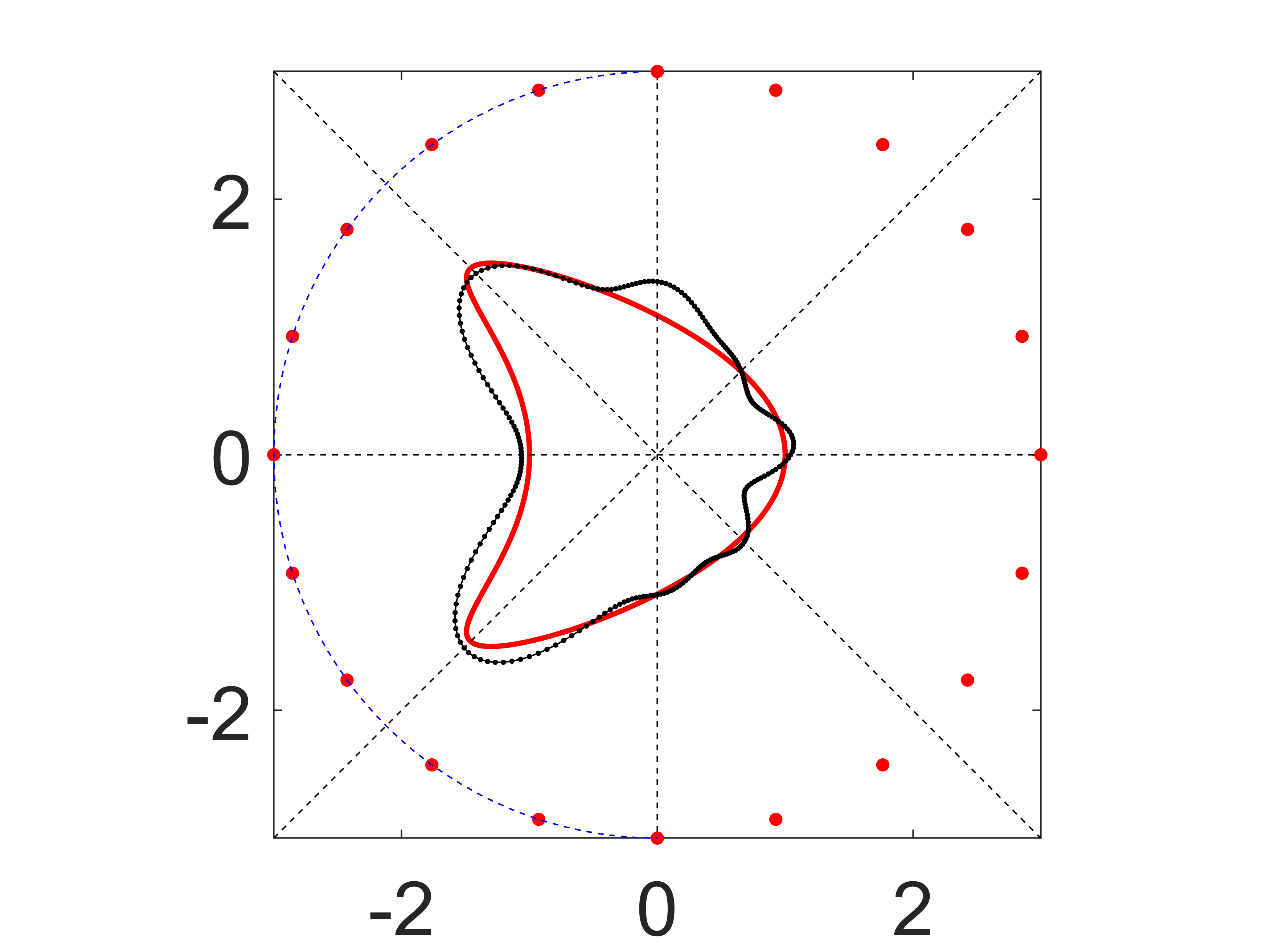}}
	\subfigure[]{\includegraphics[width=0.32\textwidth]{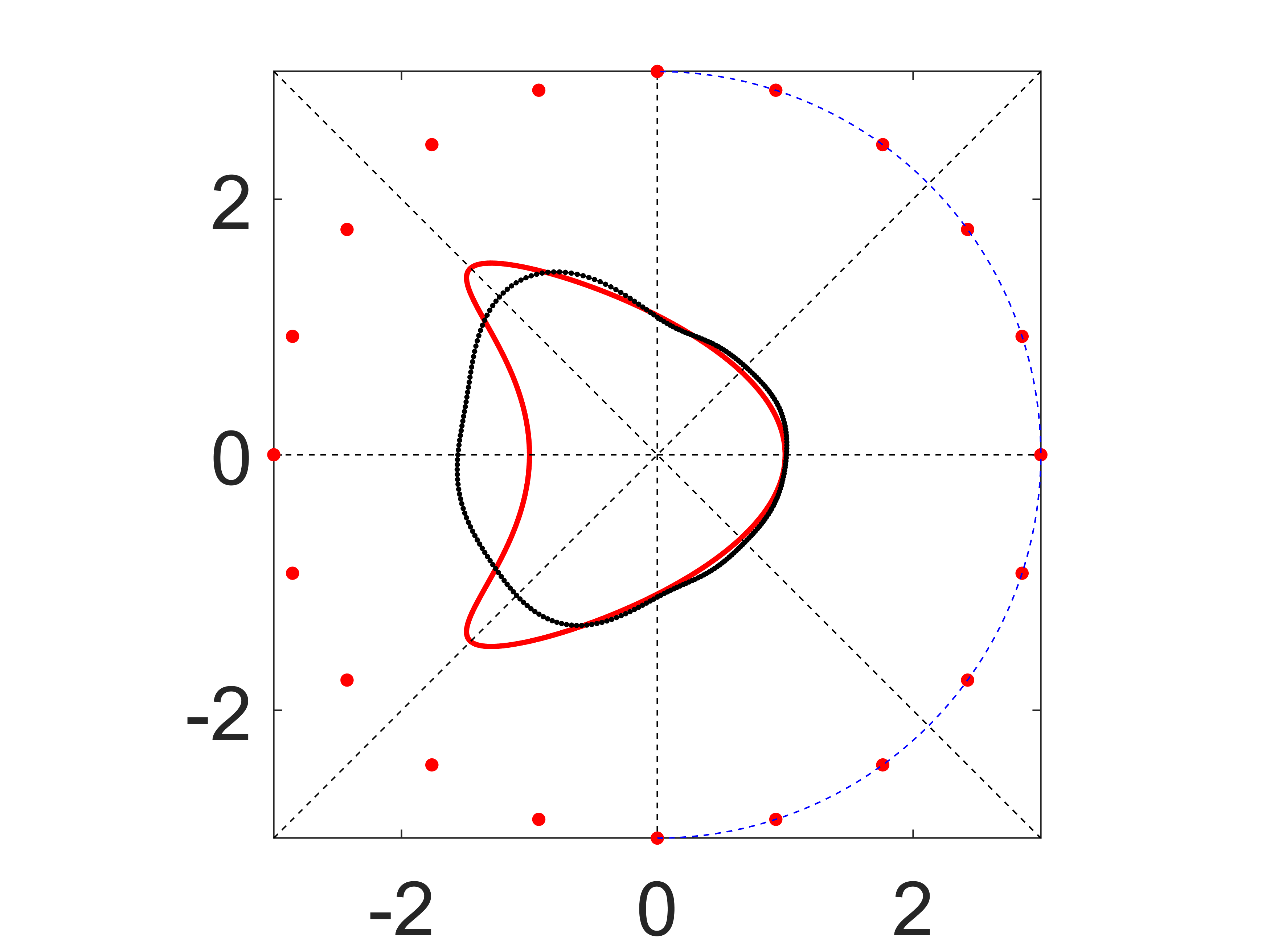}}
	\subfigure[]{\includegraphics[width=0.32\textwidth]{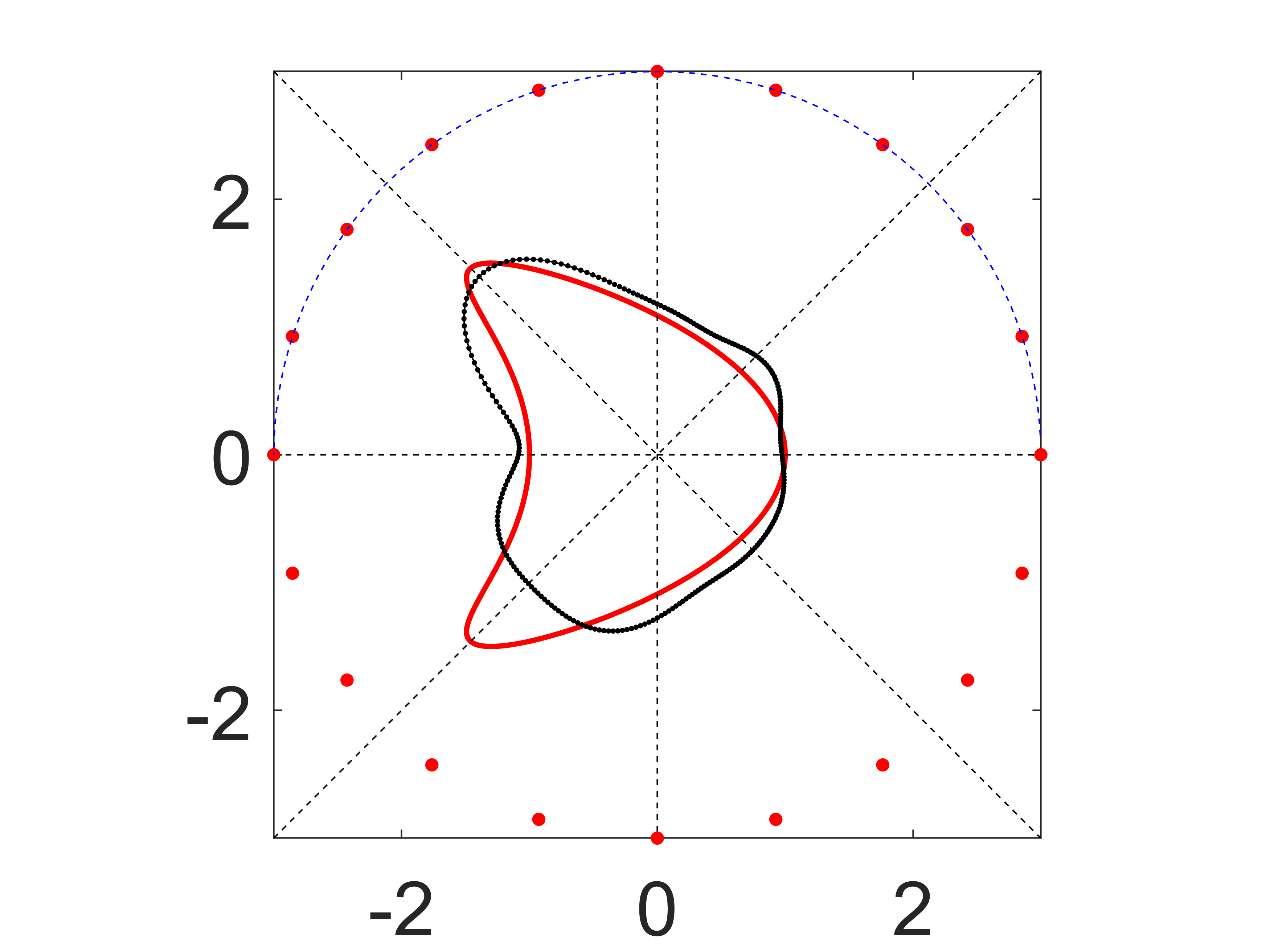}}
	\subfigure[]{\includegraphics[width=0.32\textwidth]{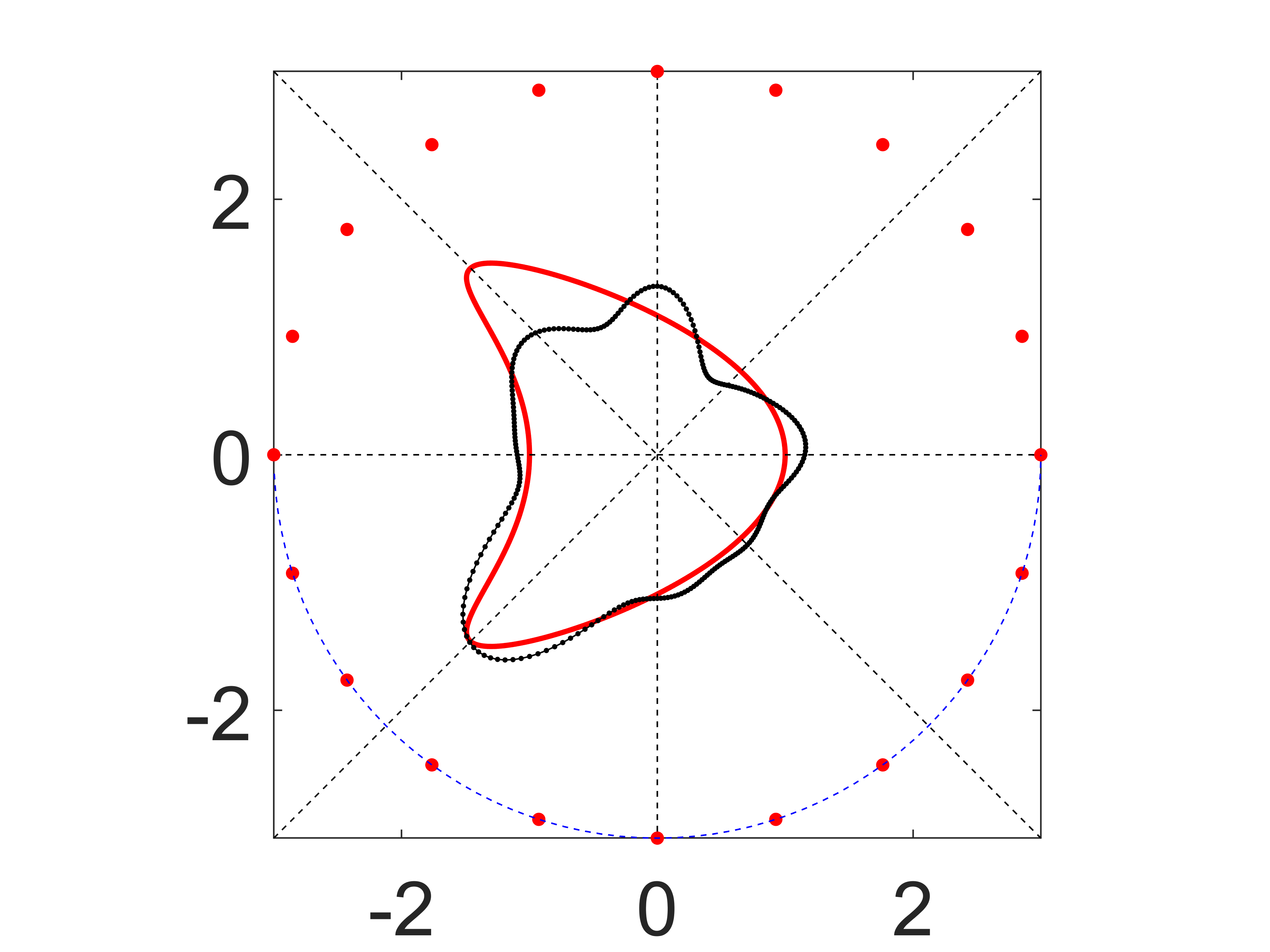}}
	\subfigure[]{\includegraphics[width=0.32\textwidth]{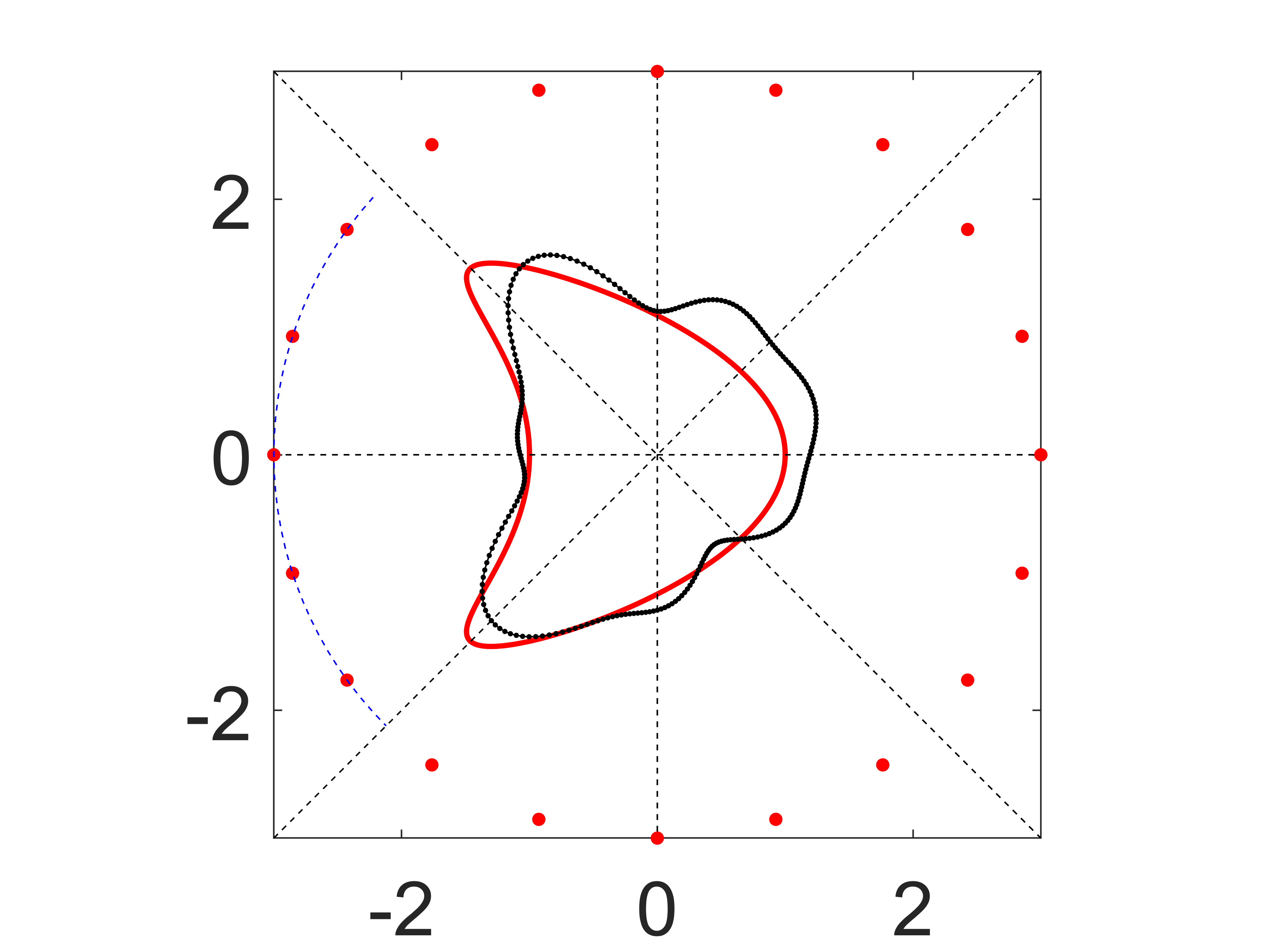}}
	\caption{Reconstruction of the kite-shaped obstacle subject to different observation apertures. (a) exact obstacle and initial guess; (b) $\theta_i\in\left[0,2\pi\right);$ (c) $\theta_i\in\left[{\pi}/{4},{7\pi}/{4}\right);$ (d) $\theta_i\in\left[0,{3\pi}/{4}\right)\cup\left[{5\pi}/{4},2\pi\right);$ (e)  $\theta_i\in\left[{\pi}/{2},{3\pi}/{2}\right);$ (f) $\theta_i\in\left[0,{\pi}/{2}\right)\cup\left[{3\pi}/{2},2\pi\right);$ (g)  $\theta_i\in\left[0,\pi\right);$ (h) $\theta_i\in\left[\pi,2\pi\right);$ (i) $\theta_i\in\left[{3\pi}/{4},{5\pi}/{4}\right).$}\label{fig:kite}
\end{figure}
\end{example}
\begin{example}
	In this example, we consider the reconstruction of the starfish-shaped obstacle whose boundary can be parameterized by 
	$$
	x(t)=(1.5\cos t+0.15\cos 4t+0.15\cos6t,1.5\sin t-0.15\sin4t+0.15\sin6t).
	$$
	
	\Cref{fig:starfish} shows the reconstruction of the starfish under different observation apertures with $\omega=5.$ In \Cref{fig:starfish}(a), we display the geometry and the initial guess. We can easily see that the obstacle can be well-reconstructed under full-aperture observation. When only limited aperture data is available, those parts under the illuminated domain can be recovered satisfactorily. 
	
Further, we change the initial guess and reconsider the reconstruction of the starfish in \Cref{fig:starfish2}. By choosing different initial guesses, we find that our method can always achieve a sastisfactory reconstruction. We would like to point out that, as analyzed in \Cref{sec:convergence}, the Newton method has only the local convergence, so we may not expect a further global convergence. Nevertheless, it is not difficult to select a proper initial guess. As can be seen in \Cref{fig:starfish2}, though with different initial guesses, the obstacle can all be well-reconstructed.

\begin{figure}
	\centering  
	\subfigure[]{\includegraphics[width=0.32\textwidth]{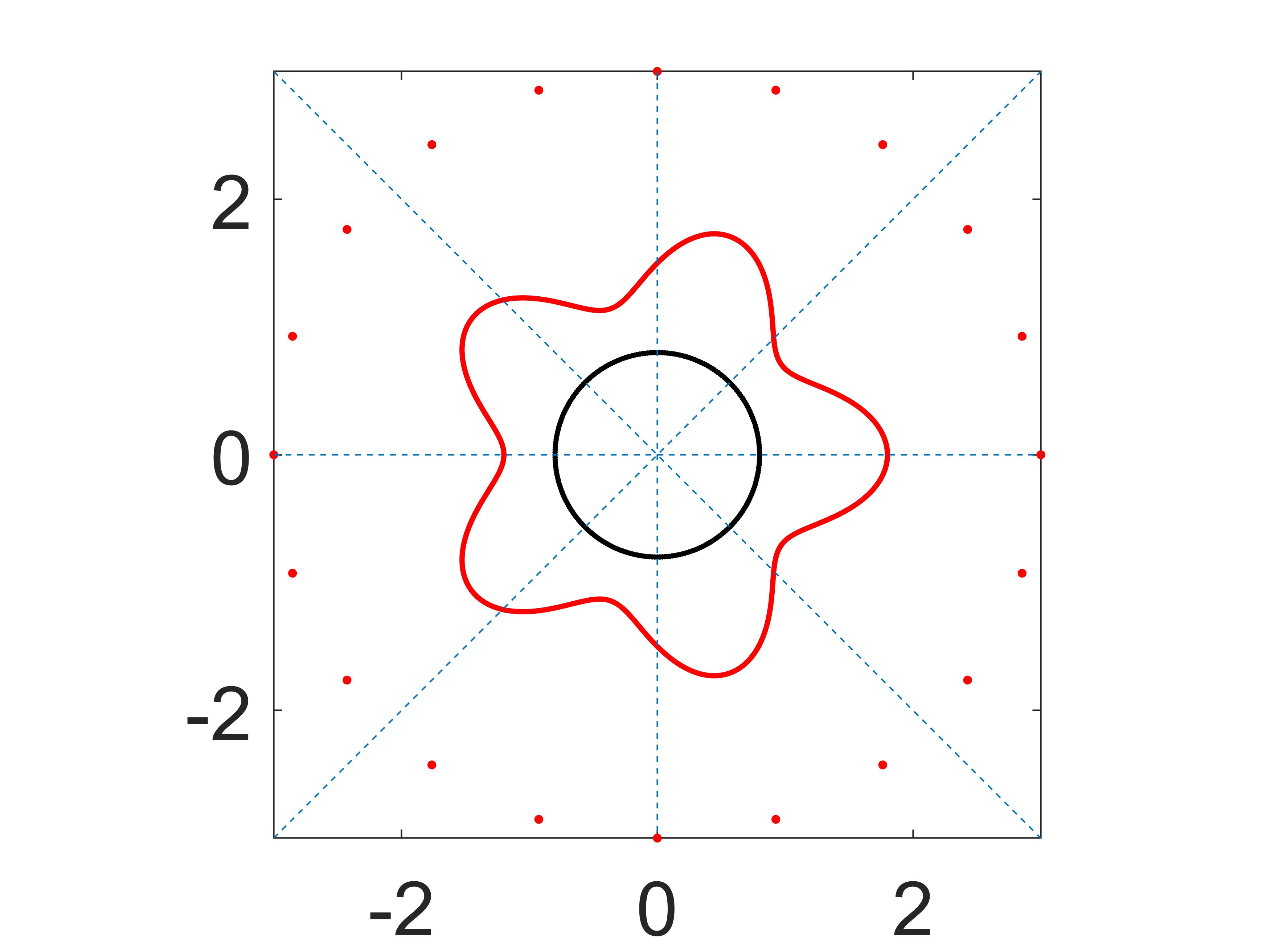}}
	\subfigure[]{\includegraphics[width=0.32\textwidth]{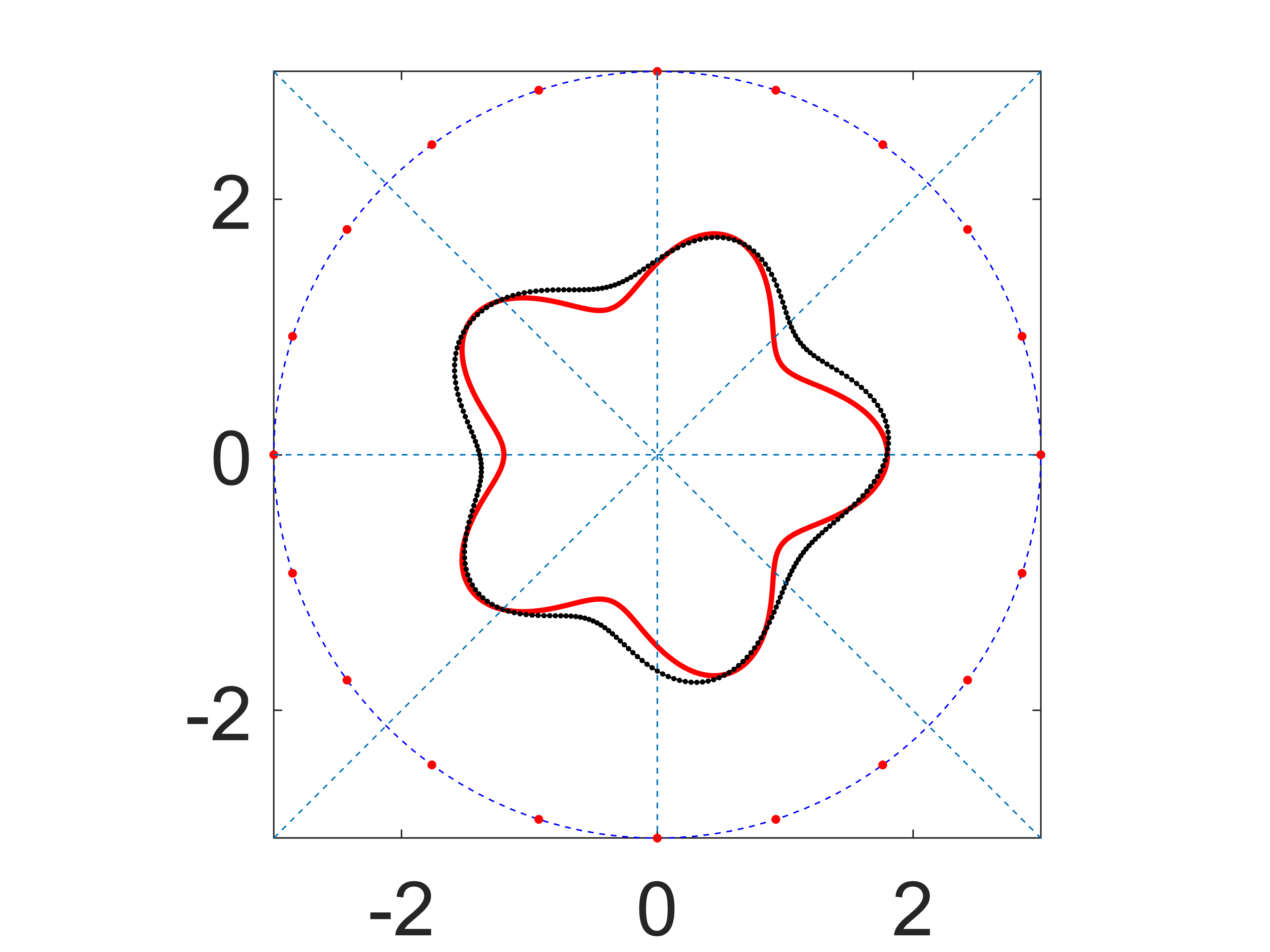}}
	\subfigure[]{\includegraphics[width=0.32\textwidth]{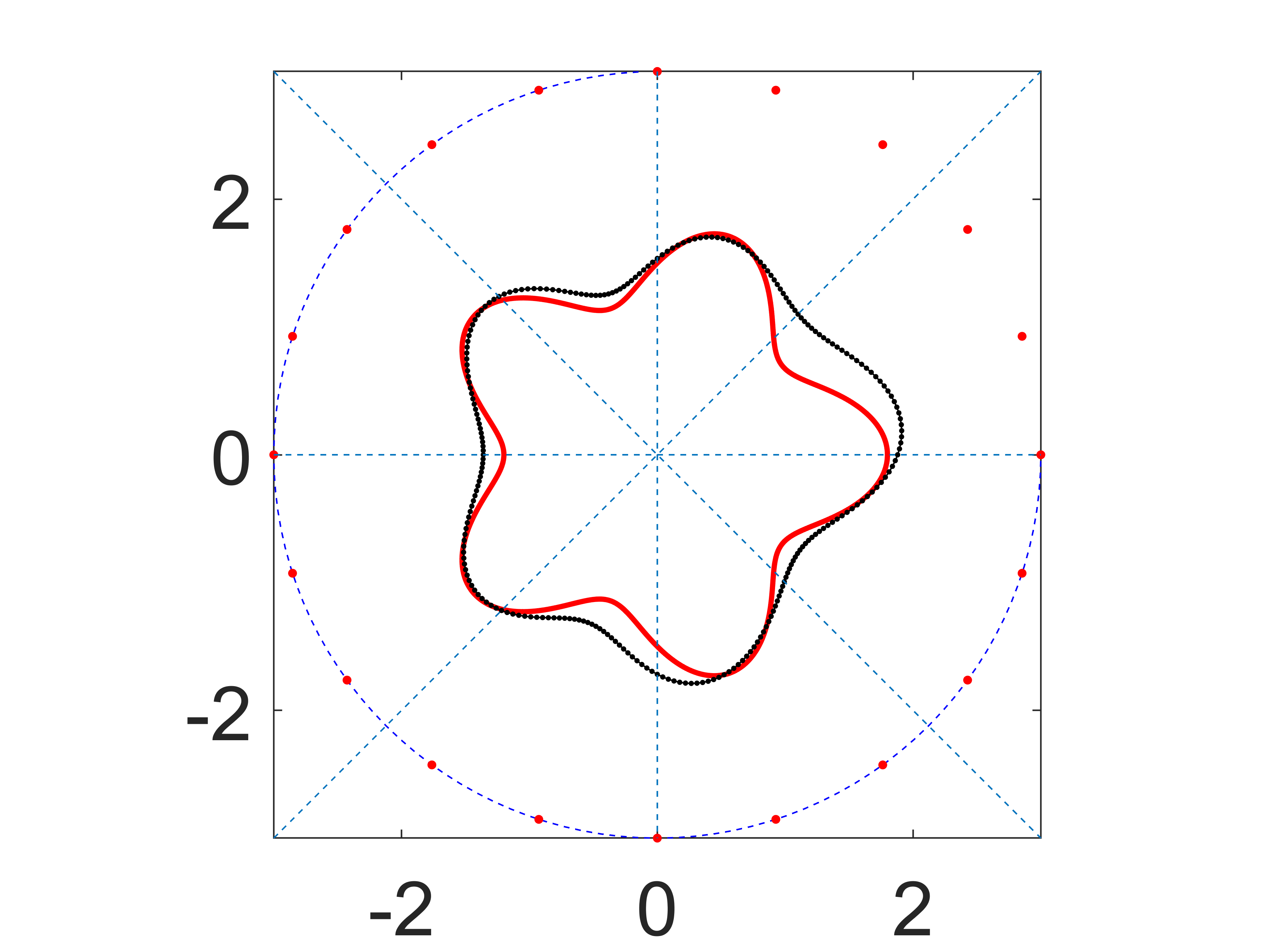}}
	\subfigure[]{\includegraphics[width=0.32\textwidth]{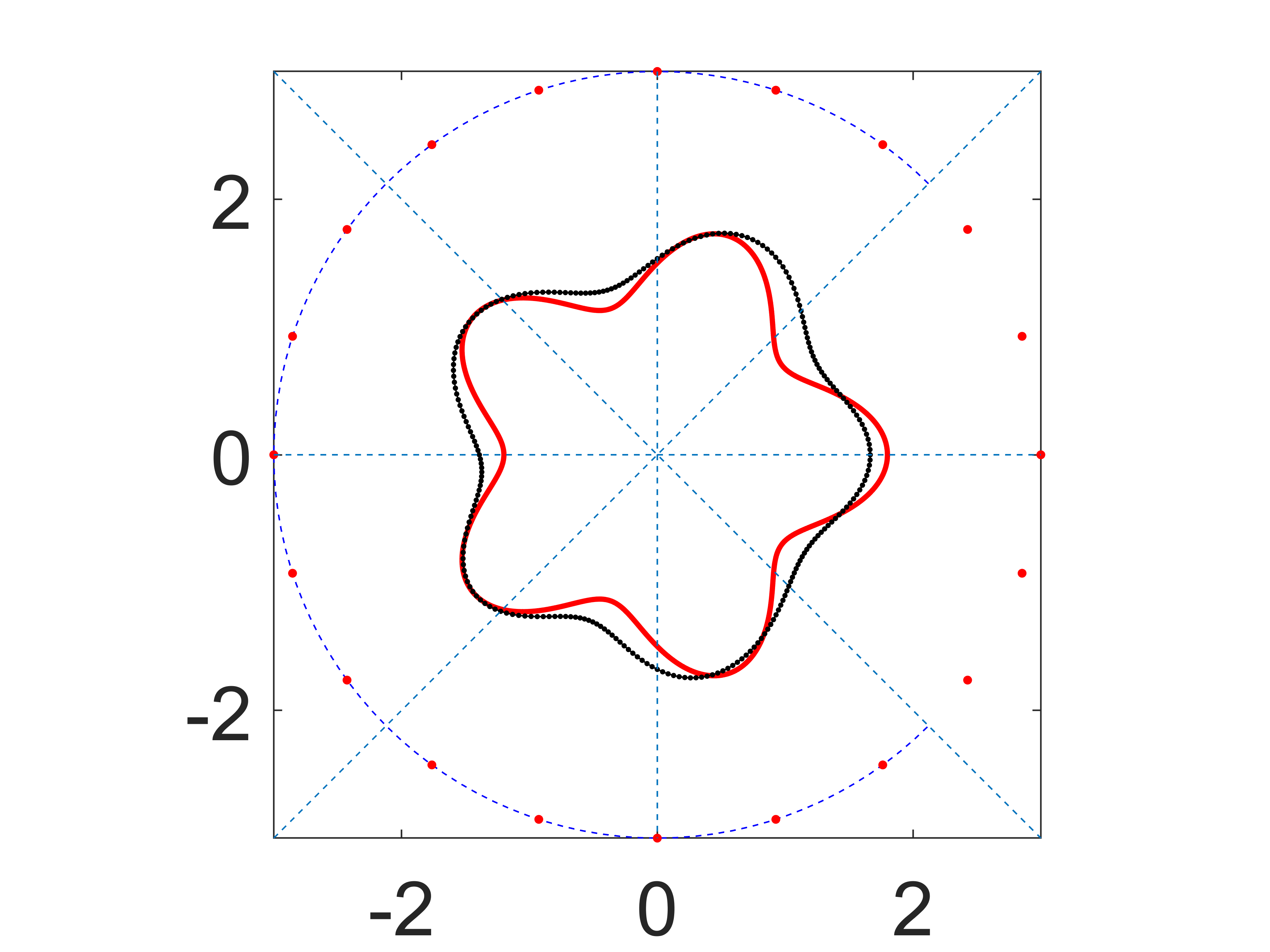}}
	\subfigure[]{\includegraphics[width=0.32\textwidth]{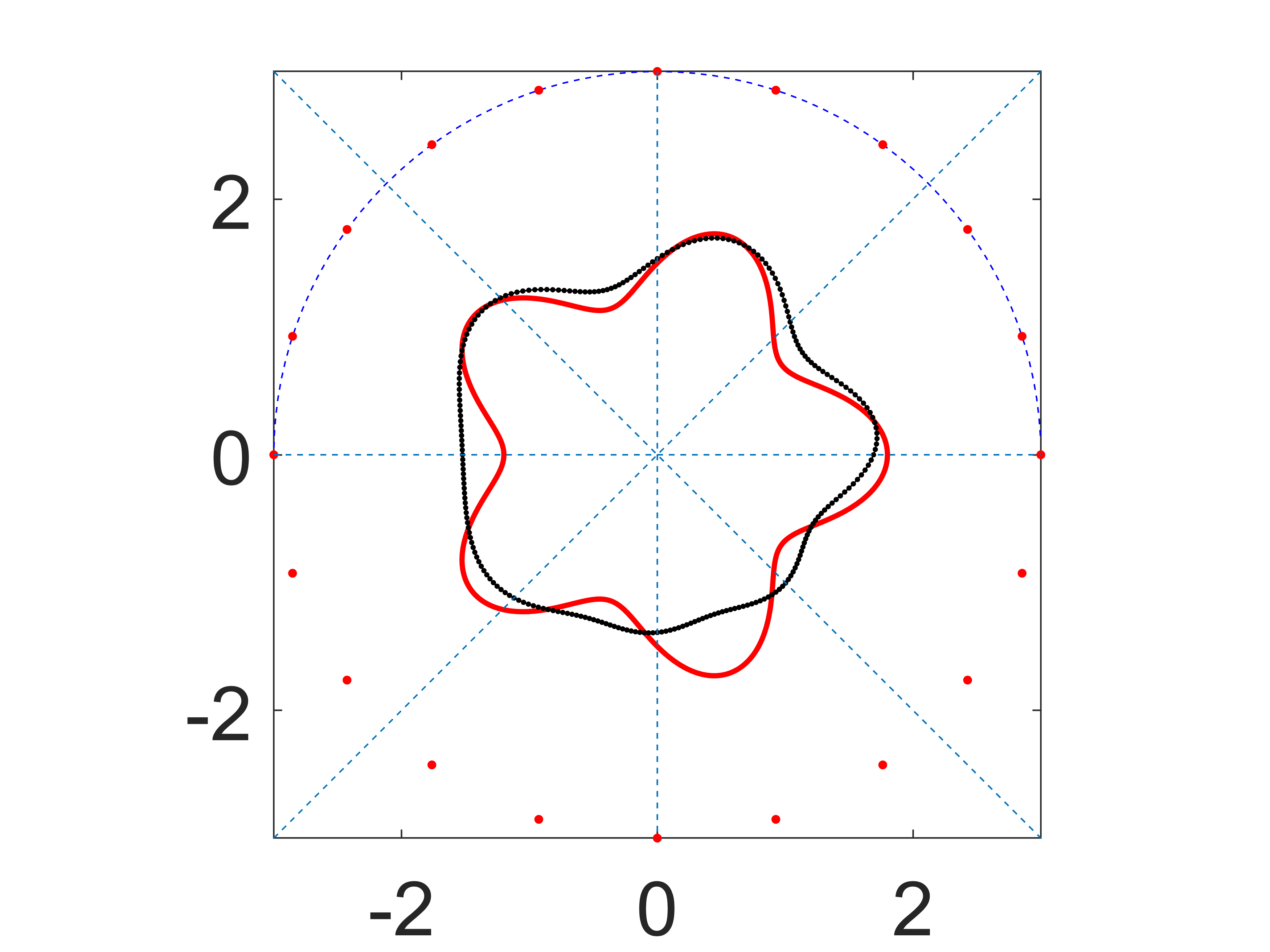}}
	\subfigure[]{\includegraphics[width=0.32\textwidth]{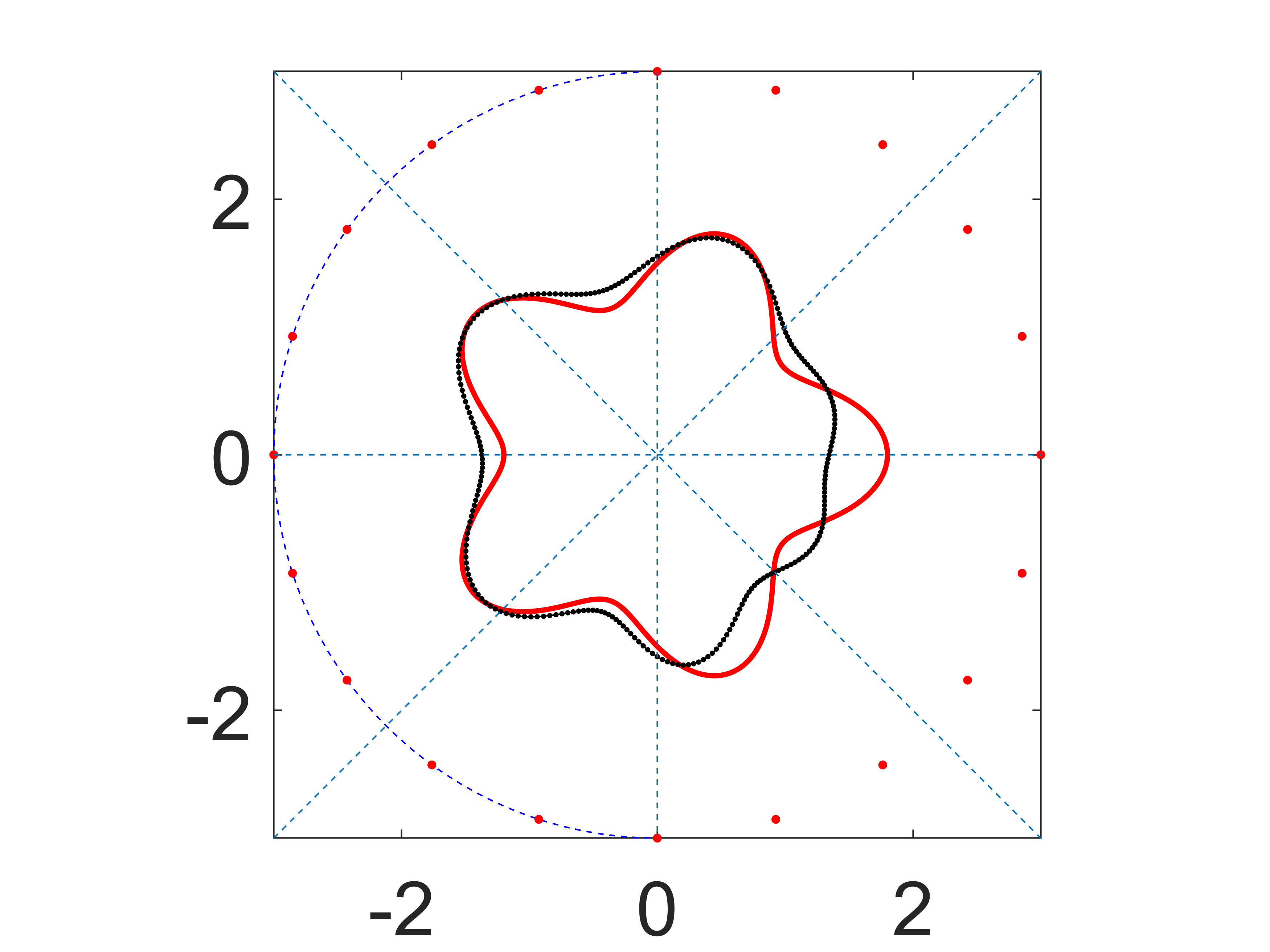}}
	\caption{Reconstruction of the starfish-shaped obstacle under different observation apertures. (a) exact obstacle and initial guess; (b) $\theta_i\in[0,2\pi);$ (c) $\theta\in[{\pi}/{2},2\pi);$ (d) $\theta\in[{\pi}/{4},{7\pi}/{4});$ (e) $\theta\in[0,\pi);$ (f) $\theta\in[{\pi}/{2},{3\pi}/{2}).$}\label{fig:starfish}
\end{figure}

\begin{figure}
	\centering  
	\subfigure[]{\includegraphics[width=0.24\textwidth]{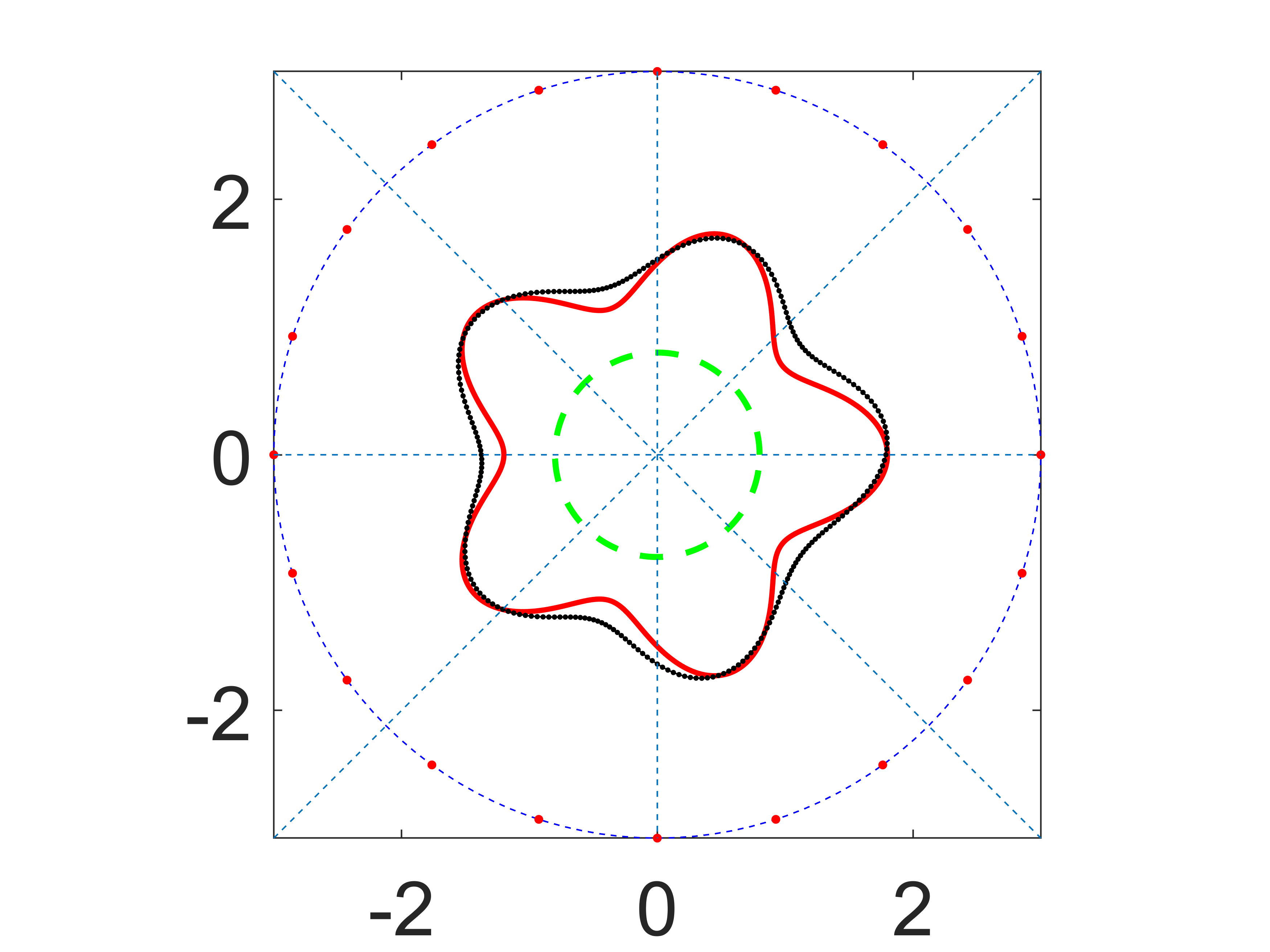}}
	\subfigure[]{\includegraphics[width=0.24\textwidth]{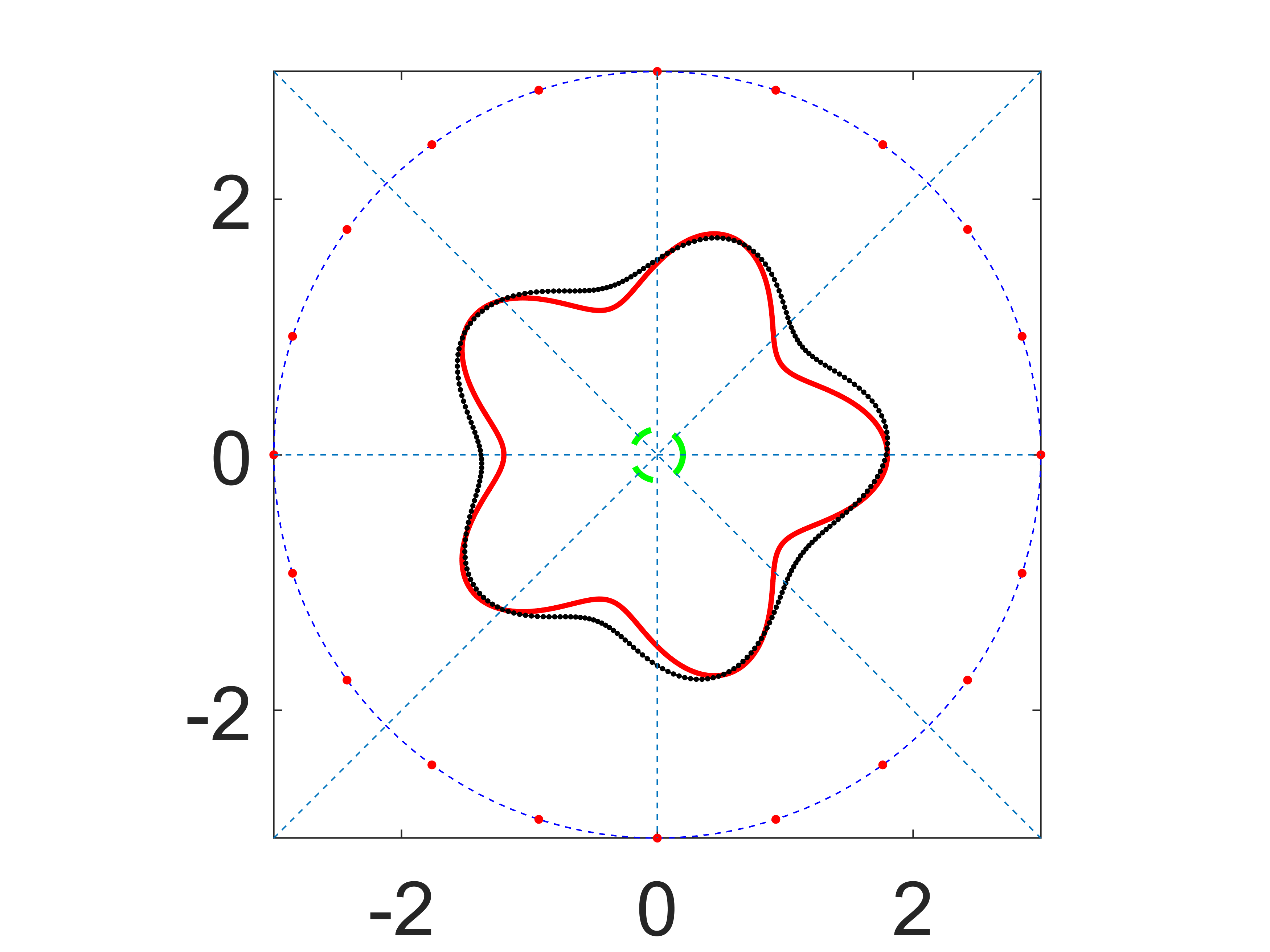}}
	\subfigure[]{\includegraphics[width=0.24\textwidth]{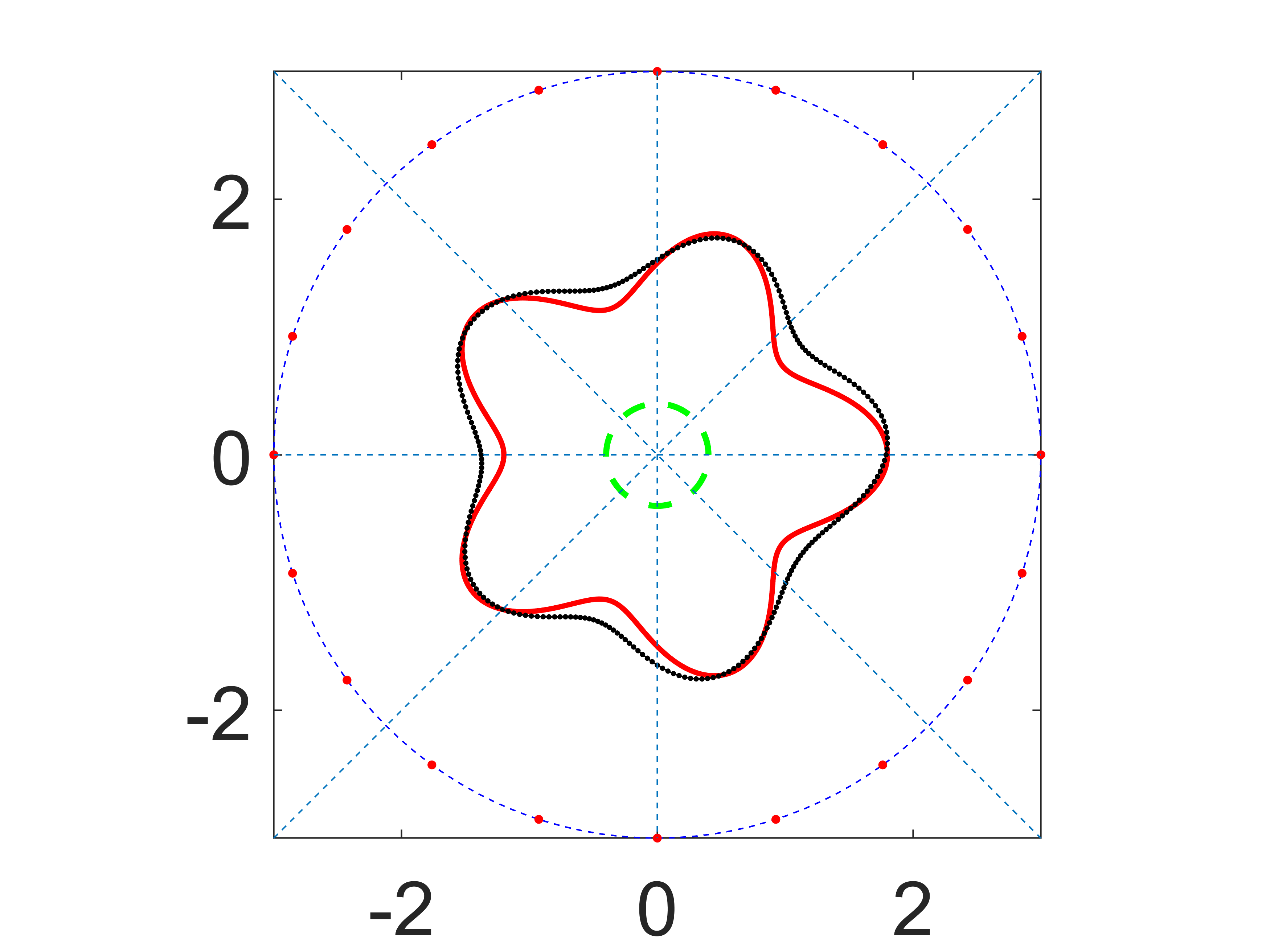}}
	\subfigure[]{\includegraphics[width=0.24\textwidth]{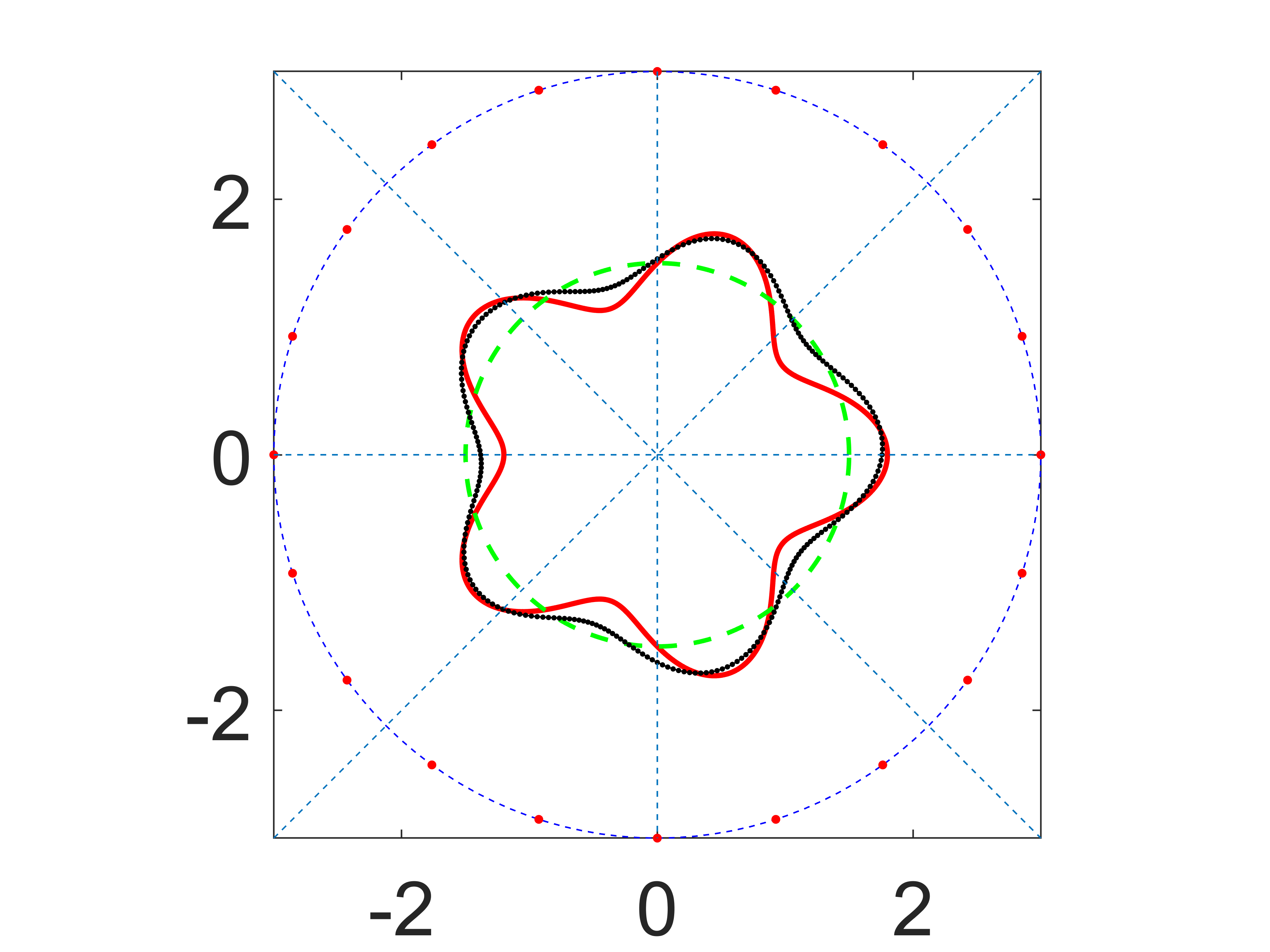}}
	\caption{Reconstruction of the starfish-shaped obstacle subject to different initial guesses (green dashed line: initial guess).}\label{fig:starfish2}
\end{figure}
\end{example}
	
\section{Conclusion}\label{sec:conclusion}

A novel Newton-type method is proposed to solve the inverse elastic scattering problem. Based on the Helmholtz decomposition and the Fourier expansion, we develop an approximate of the scattered field and establish the error estimates mathematically. Further, a Newton-type method is proposed for the inverse scattering problem, where the derivative can be explicitly computed once the approximate scattered elastic field is derived based on the former step. The method is efficient and no forward solver is involved in the inversion process. In addition, we carry on theoretical analysis to study the convergence property of the Newton method based on the error estimates. Numerically, we test the performance of the novel method and extend the method to the partial aperture problem.

In future work, we shall consider extending this Newton method to other physical models such as the inverse acoustic-elastic interaction problem and the bi-harmonic inverse scattered problem. We believe that the idea proposed here is applicable to these problems. We would also develop the corresponding theoretical analysis concerning the limited-aperture problem. 
	
	
\section*{Appendix}
\noindent\appendix 
Let us denote
\begin{align*}
	S=\sum_{n>N}n^2\tau_1^{-2n}.
\end{align*}
Then,
\begin{align*}
	(1-\tau_1^{-2})S&=(N+1)^2\tau_1^{-2(N+1)}+\sum_{n>N}(n+1)^2\tau_1^{-2(n+1)}-\sum_{n>N}n^2\tau_1^{-2(n+1)}\\
	&=(N+1)^2\tau_1^{-2(N+1)}+\sum_{n>N}(2n+1)\tau_1^{-2(n+1)}.
\end{align*}
Denote
\begin{align}\label{eq:a3}
	S_1=(1-\tau_1^{-2})S-(N+1)^2\tau_1^{-2(N+1)}=\sum_{n>N}(2n+1)\tau_1^{-2(n+1)},
\end{align}
then $S_1=\tau_1^{-2N-2}S_2,$
with $$S_2=\sum_{n=1}^\infty(2N+2n+1)\tau_1^{-2n}.$$
It is easy to check that
\begin{align*}
	(\tau_1^2-1)S_2=2N+3+2\sum_{n=1}^\infty\tau_1^{-2n}=2N+3+\dfrac{2}{\tau_1^2-1},
\end{align*}
i.e.,
\begin{equation*}
	S_2=\dfrac{2N+3}{\tau_1^2-1}+\dfrac{2}{(\tau_1^2-1)^2}.
\end{equation*}
Further, it holds that
\begin{align}\label{eq:a5}
	S_1=\tau_1^{-2N-2}S_2=(2N+3)\tau_1^{-2N}(\tau_1^2-1)^{-2}-(2N+1)\tau_1^{-2N-2}(\tau_1^2-1)^{-2}.
\end{align}

Combining \eqref{eq:a3} and \eqref{eq:a5} gives
\begin{align*}
	S&=(1-\tau_1^{-2})^{-1}\Big(S_1 + (N+1)^2\tau_1^{-2(N+1)}\Big)\\
	&=\dfrac{(2N+3)\tau_1^2-(2N+1)+(N+1)^2(\tau_1^2-1)^2}{(\tau_1^2-1)^3}\tau_1^{-2N}\\
	&\le \tau_1^{-2N}(\tau_1^2-1)^{-3}\Big((2N+3)\tau_1^2+(N+1)^2(\tau_1^2-1)^2\Big)\\
	&=\tau_1^{-2N}(\tau_1^2-1)^{-3}\Big((N+1)^2\tau_1^4-(2N^2+2N-1)\tau_1^2+(N+1)^2\Big).
\end{align*}
Noticing $\tau_1>1,$ we know that
\begin{align*}
	-(2N^2+2N-1)\tau_1^2+(N+1)^2&\le\tau_1^2\left((N+1)^2-(2N^2+2N-1)\right)\\
	&\le\tau_1^2(2-N^2).
\end{align*}
Thus, for $N\ge 2,$
\begin{align*}
	S\le\tau_1^{-2N}(\tau_1^2-1)^{-3}(N+1)^2\tau_1^4\le4N^2\tau_1^{4-2N}(\tau_1^2-1)^{-3},
\end{align*}
which completes the proof.


\footnotesize
\bibliographystyle{plain}

\end{document}